\newcommand{\Z}{\mathbb{Z}}
\newcommand{\N}{\mathbb{N}}
\newcommand{\C}{\mathbb{C}}
\newcommand{\A}{\mathbb{A}}
\newcommand{\F}{\mathbb{F}}
\newcommand{\Q}{\mathbb{Q}}
\newcommand{\R}{\mathbb{R}}
\newcommand{\E}{\mathcal{E}}
\renewcommand{\Pr}{\mathcal{P}}
\newcommand{\Qr}{\mathcal{Q}}
\renewcommand{\H}{\mathbb{H}}
\renewcommand{\P}{\mathbb{P}}
\newcommand{\M}{\mathcal{M}}
\renewcommand{\O}{\mathcal{O}}
\newcommand{\Cl}{\mathcal{C}}
\renewcommand{\b}{\backslash}
\newcommand{\vers}{\longrightarrow}
\newcommand{\End}{\mathrm{End}}
\newcommand{\Hom}{\mathrm{Hom}}
\newcommand{\Ell}{\mathrm{Ell}}
\newcommand{\Jac}{\mathrm{Jac}}
\newcommand{\Spec}{\mathrm{Spec}\,}
\renewcommand{\frak}{\mathfrak}
\newcommand{\de}{\,:\,}
\renewcommand{\mod}{\ \mathrm{mod}\ }
\renewcommand{\v}{\vspace{5mm}}
\newtheorem*{thm}{Théorème}
\newtheorem*{lem}{Lemme}
\newtheorem*{prop}{Proposition}
\newtheorem*{cor}{Corollaire}
\newtheorem*{hyp}{Hypothèse}
\theoremstyle{definition}
\definecolor{darkgreen}{RGB}{0, 150, 0}
\begin{document}

\begin{titlepage}

\centering

{\Huge \'Etude et accélération du protocole d'échange de clés de Couveignes--Rostovtsev--Stolbunov}
\vspace{1cm}

{\large Mémoire du Master 2 Mathématiques fondamentales de l'Université Paris VI}
\vspace{1cm}

{\large Jean Kieffer}
\v

\today

\vfill

\vfill

\begin{tikzpicture}[x=6cm, y=6cm]

\draw[]
 (1, 0) node(7) {2}
 (0.84, 0.54) node(443) {443}
 (0.42, 0.91) node(362) {362}
 (-0.14,0.99) node(104) {104}
 (-0.65,0.76) node(200) {200}
 (-0.96,0.28) node(468) {468}
 (-0.96, -0.28) node(84) {84}
 (-0.65, -0.76) node(385) {385}
 (-0.14, -0.99) node(242) {242}
 (0.42, -0.91) node(402) {402}
 (0.84, -0.54) node(341) {341};
\draw[]
 (7) edge[thick, blue, bend right = 10] (443)
 (443) edge[thick, blue, bend right = 10] (362)
 (362) edge[thick, blue, bend right = 10] (104)
 (104) edge[thick, blue, bend right = 10] (200)
 (200) edge[thick, blue, bend right = 10] (468)
 (468) edge[thick, blue, bend right = 10] (84)
 (84) edge[thick, blue, bend right = 10] (385)
 (385) edge[thick, blue, bend right = 10] (242)
 (242) edge[thick, blue, bend right = 10] (402)
 (402) edge[thick, blue, bend right = 10] (341)
 (341) edge[thick, blue, bend right = 10] (7);

\draw[]
 (7) edge[thick, red, bend left=45] (468)
 (468) edge[thick, red, bend left=45] (341)
 (341) edge[thick, red, bend left=45] (200)
 (200) edge[thick, red, bend left=45] (402)
 (402) edge[thick, red, bend left=45] (104)
 (104) edge[thick, red, bend left=45] (242)
 (242) edge[thick, red, bend left=45] (362)
 (362) edge[thick, red, bend left=45] (385)
 (385) edge[thick, red, bend left=45] (443)
 (443) edge[thick, red, bend left=45] (84)
 (84) edge[thick, red, bend left=45] (7);
 
\draw[]
 (7) edge[thick, darkgreen, bend left=10] (242)
 (242) edge[thick, darkgreen, bend left=10] (468)
 (468) edge[thick, darkgreen, bend left=10] (362)
 (362) edge[thick, darkgreen, bend left=10] (341)
 (341) edge[thick, darkgreen, bend left=10] (385)
 (385) edge[thick, darkgreen, bend left=10] (200)
 (200) edge[thick, darkgreen, bend left=10] (443)
 (443) edge[thick, darkgreen, bend left=10] (402)
 (402) edge[thick, darkgreen, bend left=10] (84)
 (84) edge[thick, darkgreen, bend left=10] (104)
 (104) edge[thick, darkgreen, bend left=10] (7);

\end{tikzpicture}

\vfill

\end{titlepage}

\begin{abstract}

L'objet de ce document est l'étude d'un protocole d'échange de clés fondé sur l'utilisation d'isogénies entre courbes elliptiques ordinaires sur un corps fini, proposé par Couveignes \cite{Couv} puis étudié par Rostovtsev et Stolbunov \cite{RoSt}. Après une courte présentation des notions fondamentales sur les courbes elliptiques, on présente la théorie de la multiplication complexe sur $\C$, puis son adaptation au corps finis sur laquelle le cryptosystème est construit. On introduit également les courbe modulaires, dont la manipulation est nécessaire dans les calculs. Après une exposition du protocole et des algorithmes utilisés, on présente une idée originale pour l'accélération du cryptosystème. La discussion de l'implémentation et des résultats pratiques concluent ce mémoire.

\end{abstract}

\vfill

\tableofcontents

\vfill

\newpage

\section*{Introduction}

\addcontentsline{toc}{section}{Introduction}

Le sujet de ce mémoire rentre dans le cadre de la \emph{cryptographie à base d'isogénies} entre courbes elliptiques, qui est un domaine très récent de la cryptographie asymétrique, et s'inscrit dans la lignée de la cryptographie sur courbes elliptiques.

Le principe de cette dernière est le suivant: les courbes elliptiques sont des groupes algébriques, et sont donc utilisables comme brique de base pour de nombreux protocoles en cryptographie asymétrique. La plupart du temps, on utilise une courbe elliptique en cryptographie parce que le problème du logarithme discret pour les points de cette courbe est réputé difficile (plus difficile que dans le groupe multiplicatif d'un corps fini, par exemple). Dans ce cadre, faire intervenir des isogénies entre courbes elliptiques est une idée récente, qui date du début des années 2000. À l'origine, ces morphismes entre courbes sont plutôt utilisés de manière destructrice, pour ramener une courbe elliptique donnée vers une courbe elliptique plus faible, c'est à dire plus facile à casser. On a ensuite commencé à utiliser des isogénies de manière constructive, en proposant des protocoles cryptographiques les utilisant. La tête de pont de ce mouvement est le protocole SIDH \cite{SIDH}, où interviennent des isogénies de petit degré entre courbes elliptiques \emph{supersingulières} sur un corps fini.

Le point de départ de ce projet de recherche est une proposition de Jean-Marc Couveignes \cite{Couv} d'utiliser une \emph{action} d'un groupe de classes sur un ensemble de courbes elliptiques pour construire un protocole d'échange de clés. En quelque sorte, il s'agit de l'analogue du système SIDH pour des courbes elliptiques \emph{ordinaires}. Cette idée a été étudiée par Rostovtsev et Stolbunov \cite{RoSt}, qui décrivent ce protocole plus en détail et en donnent des exemples jouets. L'objectif de ce document est d'étudier ce protocole cryptographique, et d'être capable de comprendre précisément quels doivent être ses paramètres et quelle est son efficacité réelle. On étudie également une proposition originale d'accélération de ce protocole.

\v

Avant de pouvoir décrire ce cryptosystème, il faut comprendre les propriétés de structure des isogénies entre courbes elliptiques sur un corps fini, données par la théorie de la \emph{multiplication complexe}. C'est l'objet de la première partie de ce document, qui introduit les propriétés et les concepts fondamentaux pour la suite. Lorsque l'on parle d'isogénies entre des courbes elliptiques, on est également rapidement amené à parler de \emph{courbes modulaires}, qui sont des espaces naturels paramétrisant les courbes elliptiques munies d'une certaine structure. Leur description fait l'objet de la deuxième partie, d'abord sur les nombres complexes où elles sont étroitement reliées aux formes modulaires, puis de façon plus générale.

Après ces préparations, on présente le protocole de Couveignes--Rostovtsev--Stolbunov lui-même et le calcul d'isogénies entre courbes elliptiques, d'un point de vue algorithmique. Le but de la troisième partie est d'introduire différents algorithmes utilisés pour ce calcul, et de discuter leur coût; on y apprend également ce que représente ce mystérieux graphe de couverture.

Avant d'étudier les performances de ce protocole cryptographique, certains de ses paramètres doivent être déterminés, et ceux-ci dépendent de la qualité des attaques disponibles contre le protocole. Cette étude est faite en quatrième partie, où l'on montre également que la sécurité du cryptosystème est étroitement liée à des questions fines de structure de certains groupes de classes. De nombreuses réponses à ces questions d'arithmétique restent aujourd'hui inconnues. On discute également de la recherche d'une bonne courbe initiale, dont dépend directement la possibilité d'accélérer le protocole.

Enfin, ce stage a été l'occasion d'implémenter le protocole de Couveignes--Rostovtsev--Stolbunov à l'aide de logiciels de calcul formel. Ce travail, chronophage lorsque l'on débute en programmation sérieuse, est décrit dans la cinquième et dernière partie du document. On obtient une mesure précise du coût de différentes étapes critiques des algorithmes évoqués ci-dessus, ce qui permet de se faire une idée de leur coût total, et permet également de désigner les meilleures courbes initiales. À terme, ce code est destiné à devenir un module open-source pour le logiciel de calcul formel Nemo contenant différentes primitives utiles en cryptographie sur courbes elliptiques.

\v

Ce mémoire a été réalisé au cours du stage de second semestre du Master 2 de Mathématiques fondamentales, à l'université Paris VI. Ce stage s'est déroulé entre mars et août 2017, au sein de l'équipe GRACE (Geometry, Arithmetic, Codes and Encryption) de l'organisme de recherche publique français en mathématiques et informatique, Inria. Une partie du contenu de ce mémoire a fait l'objet d'un poster présenté à la conférence ISSAC (International Symposium on Symbolic and Algebraic Computation) qui a eu lieu au mois de juillet 2017 à Kaiserslautern. Celui-ci a été récompensé du Best Poster Award à cette occasion.

Un grand merci à Daniel Augot et aux membres de l'équipe Grace pour leur excellent accueil, à Jessica Gameiro pour le soutien logistique, à Benoît Stroh, Andrew Sutherland et Jean-Pierre Flori pour des échanges de mails éclairants. Enfin, bien sûr, merci à Luca De Feo et Benjamin Smith pour leur encadrement bienveillant, pour leurs réponses à mes nombreuses questions, pour la relecture de ce document et pour m'avoir fait découvrir un domaine passionnant entre arithmétique, géométrie algébrique, calcul formel et cryptographie.

\newpage

\section{Courbes elliptiques}

\subsection{Définitions et propriétés basiques}

Une \emph{courbe elliptique} $E$ sur un corps $k$ est une courbe algébrique propre et lisse définie sur $k$ et de genre 1, munie d'un $k$-point fixé $\O_E$. Un exemple typique est donné par les \emph{équations de Weierstrass}:
$$y^2 + a_1xy = x^3 + a_2x^2 + a_4 x + a_6.$$
Cette courbe affine plane, lorsqu'elle est lisse (ce qui est équivalent à la non-nullité d'un certain discriminant, qui est une fonction polynomiale des $a_i$), peut être complétée en une courbe projective en ajoutant un unique point à l'infini. On obtient alors une courbe elliptique en prenant ce dernier point comme \og origine\fg\ de la courbe. Le $j$-invariant d'une courbe elliptique est une fraction rationnelle des $a_i$, et deux courbes elliptiques sont isomorphes sur $\bar{k}$ si et seulement si elles ont même $j$-invariant.

Soit $E$ une courbe elliptique sur $k$ dans le sens donné ci-dessus. Comme à toute courbe algébrique lisse, on peut lui attacher un schéma en groupes abélien défini sur $k$, sa \emph{jacobienne}. Un schéma en groupes (qui sera toujours abélien dans ce document) est simplement un schéma dont les points forment toujours un groupe abélien, c'est à dire un ensemble d'équations algébriques (ici à coefficients dans $k$) dont les solutions forment toujours un groupe. $GL_n$, le groupe des matrices carrées de déterminant inversible, ou $\mu_n$, le groupe des racines $n$-ièmes de l'unité, en sont des exemples qui sont définis sur $\Z$, donc aussi sur $k$. On peut définir les $\bar{k}$-points de $\Jac(E)$ de la manière suivante:
\begin{itemize}
\item[•] Un \emph{diviseur} de $E$ est une somme formelle de points de $E(\bar{k})$ à coefficients dans $\Z$.
\item[•] Le \emph{degré} d'un diviseur est la somme de ses coefficients.
\item[•] Le diviseur d'une fonction sur $E$ à valeurs dans $k$ (non nulle) est la somme de ses zéros et pôles avec multiplicité, et est systématiquement de degré zéro. Ces diviseurs sont dits \emph{principaux}.
\item[•] Le quotient du groupe des diviseurs de degré zéro par celui des diviseurs principaux forme le groupe des $\bar{k}$-points de la jacobienne de $E$.
\end{itemize}
Le théorème de Riemann--Roch, un outil puissant dans l'étude des courbes algébriques, permet de contrôler le nombre de fonctions sur $E$ dont le diviseur satisfait une certaine condition, qui se traduit en des contraintes de zéros et de pôles sur ces fonctions. Il implique que l'on dispose d'un unique isomorphisme entre la courbe $E$ et sa jacobienne envoyant $\O_E$ sur zéro, que l'on peut donner sur les points de la manière suivante:
$$\begin{aligned}
E(\bar{k}) &\overset{\sim}{\vers} \Jac(E)(\bar{k}) \\
P &\longmapsto \text{Classe de } (P) - (\O_E).
\end{aligned}$$
L'idée est la suivante: soit $D$ un diviseur de degré zéro. Alors $-D - (\O_E)$ est de degré $-1$, et il existe essentiellement une unique fonction $f$ dont les pôles et zéros sont contrôlés par ce diviseur, par le théorème de Riemann-Roch. Par la condition de degré, $f$ a un unique zéro supplémentaire, $P$. Le diviseur de $f$ est donc $- D - (\O_E) + (P)$, ce qui montre que $D = (P) - (\O_E)$ dans la jacobienne. On a donc montré la surjectivité de l'application ci-dessus.

Cet isomorphisme permet de munir $E$ d'une structure de $k$-schéma en groupes, en particulier de munir l'ensemble $E(L)$ des $L$-points de $E$ d'une structure de groupe pour toute extension $L/k$, dont l'élément neutre est $\O_E$. Le théorème de Riemann--Roch implique également que toute courbe elliptique sur $K$ admet une équation plane sous forme de Weierstrass, en regardant cette fois les fonctions ayant un pôle uniquement en $\O_E$: la fonction $x$ de l'équation plane précédente y admet un pôle double, $y$ un pôle triple, et l'équation polynomiale provient du fait qu'il y a trop de fonctions ayant un pôle d'ordre au plus 6 en ce point. Bien sûr, ce modèle n'a rien de canonique et on peut donner une courbe elliptique par d'autres équations. Si la caractéristique de $k$ est différente de 2 et 3, on peut de plus effectuer quelques changements de coordonnées et choisir $a_1 = a_2 = a_3 = 0$, et l'on obtient alors une forme de Weierstrass dite \emph{réduite} ou \emph{courte}.

\v

Une \emph{isogénie} de $E$ vers $E'$ est une application rationnelle surjective (ou, de manière équivalente, non nulle) envoyant $\O_E$ sur $\O_{E'}$, et est automatiquement un morphisme de groupes. 
En ajoutant l'application nulle $x\mapsto \O_{E'}$, on obtient les \emph{morphismes} entre $E$ et $E'$. Notons qu'un morphisme entre deux courbes elliptiques sur $k$ n'est pas nécessairement lui-même défini sur $k$. Par exemples, deux courbes elliptiques peuvent être isomorphes sur une extension quadratique de $k$ mais pas sur $k$ lui-même. Elles sont alors dites \emph{tordues} l'une de l'autre.

Notons $\End(E)$ l'anneau d'endomorphismes de $E$ : la structure de groupe donne une flèche
$$\begin{aligned}
&\Z &\longrightarrow&\ &\End(E) &\\
&n &\longmapsto& &[n]_E .\ \ &
\end{aligned}$$
Pour en dire plus sur la structure de $\End(E)$, on peut montrer que pour toute isogénie $\phi\de E\vers E'$ de degré $m$, il existe une unique isogénie notée $\widehat{\phi}\de E'\vers E$ de degré $m$ telle que $\phi\widehat{\phi}=[m]_{E'}$ et $\widehat{\phi}\phi=[m]_{E}$, que l'on appelle \emph{duale} de $\phi$. L'isogénie duale d'une somme est la somme des isogénies duales. On en déduit que le degré est une forme quadratique sur $\End(E)$ et $[m]_E$ est de degré $m^2$, donc la flèche précédente est injective.

On a alors trois possibilités: $\End(E)$ est soit
\begin{enumerate} 
\item[•]$\Z$, 
\item[•]un ordre dans un corps quadratique imaginaire,
\item[•]un ordre maximal dans une algèbre de quaternions sur $\Q$. 
\end{enumerate}
Un \emph{ordre} est un sous-anneau qui est de type fini sur $\Z$ et engendre tout l'espace comme $\Q$-espace vectoriel. Par exemple, l'anneau des entiers $\O_K$ d'un corps quadratique imaginaire $K$ (on peut penser à $\Z[i]$ dans $\Q[i]$) est un ordre de $K$. C'est d'ailleurs l'ordre maximal, et tous les ordres de $K$ sont des sous-ordres de $\O_K$. Cette alternative à trois choix est le mieux que l'on puisse faire dans le cas général, mais les choses s'arrangent un peu lorsqu'on regarde deux cas particuliers: les corps finis d'une part, et la caractéristique nulle d'autre part. Dans l'optique d'utiliser des courbes elliptiques en cryptographie, ce sont les premières qui nous intéressent.

\v

\textbf{Courbes elliptiques sur un corps fini.} Reprenons les trois cas ci-dessus. Si $k = \F_p$, où $p$ est un nombre premier, on dit que $E$ est \emph{ordinaire} dans le second cas et que $E$ est \emph{supersingulière} dans le troisième. Le premier cas est exclu par la présence du morphisme de Frobenius
$$\pi_E\de (x,y)\mapsto (x^p, y^p)$$
qui n'est pas un élément de $\Z$ (son degré, $p$, n'est pas un carré). On vient d'écrire le Frobenius pour une équation plane de $E$, mais il existe une définition plus intrinsèque, l'idée générale restant de tout mettre à la puissance $p$.

La théorie de Hasse nous dit que $\pi_E$ vérifie une équation de la forme
$$\pi_E^2 - t \pi_E + p = 0.$$
L'autre élément de $\End(E)$ vérifiant cette équation est le dual du Frobenius.
Le nombre $t$, appelé \emph{trace} de $\pi_E$ (ou de la courbe $E$), est un entier vérifiant $|t|\leq 2\sqrt{p}$. On peut montrer que le cardinal de l'ensemble $E(\F_p)$ est donné par
$$\# E(\F_p) = p + 1 - t,$$
d'où les \emph{bornes de Hasse} :
$$p + 1 - 2\sqrt{p} \leq \#E(\F_p) \leq p + 1 + 2\sqrt{p}$$
qui jouent un rôle important en pratique dans le calcul de $\# E(\F_p)$. On peut facilement lire sur le cardinal de $E(\F_p)$ le caractère supersingulier ou ordinaire de $E$: la courbe $E$ est supersingulière si et seulement si on a l'égalité $t = 0 \mod p$, ce qui est équivalent à $t=0$ dès que $p > 2\sqrt{p}$, c'est à dire $p \geq 5$. On peut montrer que ces résultats sont en fait valables sur tout corps fini, en remplaçant $p$ par $q$, une puissance de nombre premier.

\v

\textbf{Courbes elliptiques en caractéristique zéro}. Tout d'abord, le principe de plongement de Lefschetz permet de ramener les corps de caractéristique zéro au cas complexe. Si $E/\C$ est une courbe elliptique, c'est en particulier une surface de Riemann. On peut montrer qu'il existe un nombre complexe $\tau$ dans le demi-plan de Poincaré $\H$ tel que l'on ait un isomorphisme
$$\C/\Lambda_\tau \overset{\sim}{\vers} E.$$
On a noté ici $\Lambda_\tau = \Z\oplus \Z\tau$, et cet isomorphisme est donné par la fonction $\wp$ de Weierstrass et sa dérivée. Le nombre $\tau$ est unique modulo l'action de $\Gamma(1) = \mathrm{SL}_2(\Z)$ sur $\H$ par homographies. Dans cette description, une fonction sur $E$ à valeurs complexes est simplement une fonction méromorphe sur le plan complexe admettant deux périodes, données par les deux vecteurs d'une base de $\Lambda_\tau$. D'un point de vue historique, l'étude de ces \emph{fonctions elliptiques} (dont un membre éminent est la longueur d'arc d'une ellipse) est le point de départ de celle des courbes elliptiques elles-mêmes. C'est là l'origine d'un nom peut-être malheureux, car une courbe elliptique et une ellipse n'ont pas grand-chose en commun.

Dans ce contexte, une isogénie $E\vers E'$ est simplement la multiplication par un $\alpha\in \C^*$ tel que $\alpha \Lambda_\tau \subset \Lambda_\tau'$. Avec cette description explicite, il est facile de vérifier les différentes propriétés exposées plus haut: par exemple, le dual d'une isogénie est simplement son conjugué complexe. On voit également que $\End(E)$ ne peut jamais être un ordre d'une algèbre de quaternions. Tout cela motive une terminologie différente du cas des corps finis: on dira que $E$ est \emph{à multiplication complexe} si $\End(E)$ est un ordre quadratique imaginaire.
On utilise également le terme de multiplication complexe pour parler de courbes elliptiques ordinaires sur un corps fini, leurs propriétés étant globalement similaires.

Cette description agréable en termes de réseaux ne tient plus en caractéristique positive, et le passage à celle-ci semble toujours faire intervenir une preuve non triviale. Dans ce mémoire, on se servira du cas complexe comme une motivation pour la démonstration des résultats généraux.

\v

Le matériel aperçu dans cette introduction est étudié plus en profondeur (et plus proprement) dans plusieurs cours sur les courbes elliptiques, comme \cite{Nekovar} et \cite{Stroh}.
Le but de ce document étant de travailler avec des courbes elliptiques ordinaires sur un corps fini, on commence par étudier la théorie de la multiplication complexe sur $\C$.

\subsection{Multiplication complexe sur $\C$}

La référence principale de cette section est \cite{Sil2}. On fixe ici un ordre $\O$ dans un corps quadratique imaginaire $K\subset \C$, une extension de $\Q$ de degré 2 vérifiant $K \not\subset\R$: concrètement, on a $K=\Q(\sqrt{-d})$ pour un certain entier $d\geq 2$ sans facteur carré. On note $\Ell_\C(\O)$ l'ensemble des courbes elliptiques sur $\C$ (à isomorphisme près) ayant multiplication complexe par $\O$, c'est à dire les courbes $E_\tau = \C/\Lambda_\tau$ pour lesquelles on a:
$$\End(E_\tau) = \{\alpha\in \C\ |\ \alpha\Lambda_\tau \subset \Lambda_\tau\} = \O.$$
On identifiera un élément de cet ensemble avec ses représentants. Dans \cite{Sil2}, on suppose que $\O$ est l'anneau d'entiers de $K$, ce qui permet de simplifier certaines démonstrations; cette supposition n'est pas essentielle en réalité, et on énonce ici les résultats pour un ordre quelconque.

Si $\O_K$ désigne l'anneau d'entiers de $K$, on appelle $f=[\O:\O_K]$ le \emph{conducteur} de $\O$, et l'on a $\O = \Z  + f \O_K$. Le discriminant de $\O$ est alors $D = f^2 D_K$. Les idéaux de $\O$ premiers à $f$ sont les idéaux inversibles de $\O$ et engendrent un groupe d'idéaux fractionnaires, que l'on quotiente par les idéaux principaux pour obtenir le \emph{groupe de classes d'idéaux} $\Cl(\O)$ de $\O$.
\v

La première étape est de fixer un isomorphisme $[\,\cdot\,]\de \O\vers \End(E)$ pour toute courbe $E\in \Ell_\C(\O)$ afin de pouvoir identifier ces anneaux. On fait cela en regardant l'action sur le $\C$-espace vectoriel de dimension 1 des formes différentielles sur $E$, en imposant
$$\forall \omega\in H^0(E,\Omega^1),\ \forall\,\alpha\in\O,\ [\alpha]^*\omega = \alpha \omega.$$
Cela coïncide bien avec la description précédente: sur $E_\tau = \C/\Lambda_\tau$, la différentielle $dz$ est invariante, et l'on a bien $\alpha^* dz = \alpha dz$. Cette condition est suffisante pour fixer un unique isomorphisme, car l'action sur la différentielle invariante n'est nulle que si l'endomorphisme en question est nul. Cela provient du fait que tous les morphismes sont \emph{séparables} en caractéristique zéro.

Le but est maintenant de définir une action de $\Cl(\O)$ sur ces courbes elliptiques. Au niveau des idéaux, on procède de la manière suivante: soit $E\in \Ell_\C(\O)$. En raisonnant en termes de réseaux, on voit que pour tout sous-groupe fini $S$ de $E(\C)$, il existe une courbe elliptique $E'$ et une isogénie $E\vers E'$ de noyau $S$, et celle-ci est unique à isomorphisme près (on déclare que deux isogénies sont isomorphes si elles diffèrent d'un isomorphisme à l'arrivée). Soit $\frak a$ un idéal inversible de $\O$. On définit
$$E[\frak a]=\bigcap_{\phi\in \frak a} \mathrm{Ker}\,\phi$$
(rappelons l'identificiation $\End(E)\simeq\O$ ci-dessus). On note $\phi_{\frak a}$ l'isogénie de noyau $E[\frak a]$, et $\frak a\cdot E$ sa courbe image, qui est bien définie comme élément de $\Ell_\C(\O)$.

\v

Pour montrer que cette \emph{action par isogénies} en est bien une, on peut faire appel une fois de plus aux réseaux et montrer la propriété suivante : pour tout idéal inversible $\frak a$ de $\O$ et toute courbe $E=\C/\Lambda\in \Ell_\C(\O)$, on a
$$\frak a\cdot E = \C/\frak a^{-1} \Lambda.$$
On en déduit que le degré de $\phi_{\frak a}$ est la norme de l'idéal $\frak a$, et que la courbe $\frak a\cdot E$ a également multiplication complexe par $\O$. On en déduit aussi la transitivité, et il est immédiat que les idéaux principaux agissent trivialement, l'isogénie en question étant alors l'endomorphisme qui engendre l'idéal. On obtient donc l'action annoncée
$$\Cl(\O) \circlearrowright \Ell_\C(\O).$$

De plus, cette dernière action est simplement transitive. Les preuves données dans \cite{Sil2} utilisent de manière essentielle la représentation des courbes elliptiques sur $\C$ sous la forme $\C/\Lambda$, et tout le jeu consiste à dire que $\Lambda$ est déjà essentiellement un idéal fractionnaire de $\O$. Cette méthode n'est donc pas transposable telle quelle en caractéristique positive. Pourtant, un résultat similaire reste valable pour les courbes elliptiques sur un corps fini.

\subsection{Multiplication complexe sur les corps finis}

 Dans cette section, $k$ désigne un corps fini. On peut définir comme précédemment $\frak a\cdot E$ pour $\frak a$ un idéal inversible de $\O$ et $E\in \Ell_k(\O)$, comme l'image de l'isogénie de noyau $E[\frak a]$, à l'aide de la propriété suivante.

\begin{lem}[Quotient, cas étale] Soit $S$ un sous-groupe fini de $E(\bar{k})$. Il existe une courbe elliptique $E'$ définie sur $\bar{k}$ et une isogénie séparable $\phi_S\de E\vers E'$ de noyau $S$. Cette isogénie est unique à isomorphisme près. Si $S$ est défini sur $k$, c'est à dire globalement stable par l'action du groupe de Galois, alors $\phi_S$ et $E'$ peuvent également être définies sur $k$.
\end{lem}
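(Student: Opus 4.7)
Pour montrer l'existence, je procéderais de manière constructive en invoquant les \emph{formules de Vélu}: étant donné une équation de Weierstrass pour $E$ et une énumération des points non triviaux de $S$, ces formules fournissent explicitement une équation de Weierstrass pour une courbe $E'$ et les fractions rationnelles définissant une isogénie $\phi_S \de E \vers E'$ de noyau $S$. La séparabilité est automatique, puisque le degré de $\phi_S$ coïncide alors avec le cardinal de son noyau. De façon plus intrinsèque, on peut aussi observer que $E$ est un $S$-torseur sur le quotient $E/S$ qui existe comme schéma en groupes; comme $S$ est étale (ses points étant $\bar{k}$-rationnels), la projection canonique est bien une isogénie séparable entre courbes elliptiques.

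Pour l'unicité à isomorphisme près, je travaillerais au niveau des corps de fonctions. Une isogénie séparable $\phi\de E\vers E'$ de noyau $S$ induit une extension séparable $\phi^* \bar{k}(E') \subset \bar{k}(E)$ de degré $\#S$, et le sous-corps $\phi^*\bar{k}(E')$ s'identifie exactement à $\bar{k}(E)^S$, le sous-corps des fonctions invariantes sous les translations par les éléments de $S$. Deux isogénies séparables de même noyau donnent donc le même sous-corps de $\bar{k}(E)$, d'où un isomorphisme entre les courbes images rendant le diagramme commutatif.

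Pour la descente galoisienne, j'exploiterais l'unicité. Si $S$ est globalement stable sous l'action de $\Gal(\bar{k}/k)$, alors pour tout $\sigma$ dans ce groupe, l'isogénie conjuguée $\sigma(\phi_S)\de E\vers \sigma(E')$ est encore séparable et de noyau $S$, et diffère donc de $\phi_S$ d'un isomorphisme $f_\sigma\de E'\vers \sigma(E')$; la famille $(f_\sigma)_\sigma$ est un 1-cocycle. La manière la plus économique de conclure est de vérifier la rationalité directement sur les formules de Vélu: les coefficients de l'équation de $E'$ et des fractions rationnelles donnant $\phi_S$ s'expriment comme des polynômes en les fonctions symétriques des coordonnées des points de $S$, donc sont dans $k$ dès que $S$ est globalement $\Gal(\bar{k}/k)$-stable, ce qui trivialise automatiquement le cocycle précédent. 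Le point le plus subtil me semble être la vérification soigneuse de cette rationalité explicite et la gestion des cas de basses caractéristiques, où les formules de Vélu requièrent quelques adaptations.
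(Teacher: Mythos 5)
Votre proposition est correcte et suit essentiellement l'approche que le mémoire esquisse sans la développer: le texte mentionne explicitement les deux stratégies que vous invoquez (l'une concrète par les formules de Vélu, l'autre abstraite par quotient de schémas), et renvoie la première à la section sur les formules de Vélu. Votre argument d'unicité par la correspondance galoisienne des corps de fonctions ($\phi^*\bar{k}(E') = \bar{k}(E)^S$, les deux sous-corps ayant même indice $\#S$ dans $\bar{k}(E)$) est la preuve standard et comble proprement une lacune que le mémoire laisse implicite; de même, vous avez raison de préférer la vérification directe de rationalité sur les coefficients de Vélu plutôt que d'essayer de trivialiser le 1-cocycle, puisque $H^1(\Gal(\bar{k}/k),\mathrm{Aut}(E'))$ n'est pas trivial en général et que l'argument direct évite d'avoir à identifier la bonne forme tordue.
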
 

Ce lemme est en fait vrai dans un cadre plus général qui permet de prendre en compte le cas des isogénies inséparables, où $S$ est un sous-schéma en groupes fini et plat de $E$. Ici, la courbe $\frak a\cdot E$ reste donc définie sur $k$, puisque $E[\frak a]$ l'est. On peut donner deux stratégies pour obtenir ce lemme: l'une très concrète, en partant d'une équation de la courbe et du noyau et en construisant l'équation de la courbe image, et l'autre plus abstraite, en construisant la courbe image comme un quotient dans le monde des schémas. On reviendra sur la première méthode plus loin dans ce document.

\begin{prop}

Lorsque $k$ est un corps fini, on dispose également d'une action de $\Cl(\O)$ sur $\Ell_k(\O)$ qui est simplement transitive.

\end{prop}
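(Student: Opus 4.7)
Le plan est de définir l'action en reprenant la construction par quotient du cas complexe, puis de ramener la simple transitivité au cas déjà traité sur $\C$ via le théorème de relèvement de Deuring.

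Pour $\frak a$ un idéal inversible de $\O$ et $E\in \Ell_k(\O)$, on pose $E[\frak a] = \bigcap_{\phi\in \frak a} \Ker \phi$ ; en prenant des générateurs de $\frak a$, cela revient à intersecter un nombre fini de noyaux. Un fait classique sur les courbes elliptiques ordinaires sur un corps fini affirme que $\End(E)$ coïncide avec l'anneau des endomorphismes définis sur $k$ : les noyaux précédents sont donc des sous-schémas en groupes finis de $E$ définis sur $k$, et le lemme ci-dessus fournit une isogénie $\phi_{\frak a}\de E\vers \frak a\cdot E$ définie sur $k$. Par commutativité de $\O$, tout $\alpha\in\O$ préserve $E[\frak a]$ et descend en un endomorphisme de $\frak a\cdot E$ ; on obtient ainsi une injection $\O\hookrightarrow \End(\frak a\cdot E)$, dont l'égalité sera vérifiée \emph{a posteriori} via le relèvement à la caractéristique nulle. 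Les propriétés fonctorielles $(\frak{ab})\cdot E = \frak a\cdot (\frak b\cdot E)$ et l'action triviale des idéaux principaux se vérifient alors comme dans le cas complexe, en identifiant les noyaux des isogénies composées.

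Pour la simple transitivité, j'invoquerais le théorème de relèvement de Deuring : toute courbe $E_0 \in \Ell_k(\O)$ se relève en une courbe elliptique $\tilde E$ définie sur un corps de nombres $L$ (assez grand pour contenir le corps de classes de rayon associé à $\O$), munie de multiplication complexe par $\O$ et ayant bonne réduction modulo une place $\mathfrak{P}$ au-dessus de $p = \mathrm{car}(k)$, avec $\tilde E \mod \mathfrak{P}\simeq E_0$. Ce théorème fournit de plus un isomorphisme $\End(\tilde E)\simeq \End(E_0)$, qui identifie les noyaux : $\tilde E[\frak a]$ se réduit sur $E_0[\frak a]$. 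La réduction modulo $\mathfrak{P}$ est ainsi une application $\Cl(\O)$-équivariante entre $\Ell_L(\O)$ et $\Ell_k(\O)$, et fournit donc aussi $\End(\frak a\cdot E) = \O$ par transport depuis la caractéristique nulle.

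On conclut en exploitant le cas complexe, qui affirme que $\Cl(\O)$ agit de manière simplement transitive sur $\Ell_L(\O) = \Ell_\C(\O)$ ; en particulier $\#\Ell_L(\O) = h(\O)$. Or toutes les courbes de $\Ell_k(\O)$ partagent la même trace de Frobenius (déterminée par l'équation minimale de $\pi$ dans $\O$) et sont donc mutuellement isogènes ; la correspondance de Deuring établit alors une bijection entre $\Ell_L(\O)$ et $\Ell_k(\O)$ tout entier. L'équivariance transporte la simple transitivité du haut vers le bas. L'obstacle principal est sans conteste le théorème de Deuring, qui requiert des outils non triviaux de multiplication complexe (notamment le choix compatible d'un idéal $\frak p$ de $\O$ au-dessus de $p$ correspondant au Frobenius) ; une fois celui-ci admis, la preuve est essentiellement formelle, conformément à la stratégie évoquée à la fin de la section précédente.
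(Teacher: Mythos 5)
Votre stratégie est essentiellement celle du mémoire : définir l'action par le lemme de quotient appliqué à $E[\frak a]$, relever les courbes en caractéristique nulle par les théorèmes de Deuring, utiliser la transitivité simple sur $\C$, puis redescendre par réduction. La difficulté est que l'étape que vous présentez comme une conséquence du théorème de relèvement — \og l'isomorphisme $\End(\tilde E)\simeq\End(E_0)$ identifie les noyaux : $\tilde E[\frak a]$ se réduit sur $E_0[\frak a]$\fg, autrement dit l'équivariance de la réduction — n'est pas contenue dans les énoncés de Deuring tels qu'ils sont cités (relèvement d'une courbe munie d'\emph{un} endomorphisme, isomorphisme des anneaux d'endomorphismes préservant le degré). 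C'est précisément l'ingrédient que le texte admet sans démonstration (le choix d'un relèvement pour lequel la réduction induit une bijection $E^0[\frak a]\overset{\sim}{\to}E[\frak a]$), en soulignant n'avoir pu ni le démontrer ni en trouver une preuve complète dans la littérature ; la voie rigoureuse connue passe par les modules de Tate (Waterhouse). Votre preuve contient donc exactement le même trou, mais sans le signaler : en l'état, rien ne justifie que le noyau de la réduction de $\phi^0_{\frak a}$ soit $E_0[\frak a]$.

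Deux autres points demandent attention. D'une part, vous transportez la transitivité simple par un argument de bijection et de comptage, ce qui exige l'injectivité de la réduction $\Ell_L(\O)\vers\Ell_k(\O)$ ; vous ne la prouvez pas (la \og correspondance de Deuring\fg\ invoquée en bloc est en substance l'énoncé que l'on cherche à établir). Le mémoire contourne ce point en traitant directement le stabilisateur : si $\frak a\cdot E=E$, l'isogénie $\phi_{\frak a}$ est un endomorphisme de $E$, on le relève en $\psi\in\End(E^0)$, l'égalité des degrés donne $\psi=\phi^0_{\frak a}$ à isomorphisme près, d'où $\frak a\cdot E^0=E^0$ et $\frak a$ principal par le cas complexe. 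D'autre part, l'affirmation que toutes les courbes de $\Ell_k(\O)$ ont la même trace (donc sont mutuellement isogènes) néglige les tordues quadratiques, dont le Frobenius est $-\pi$ et l'anneau d'endomorphismes encore $\O$ ; de même, l'identification $\Ell_L(\O)=\Ell_\C(\O)$ confond classes d'isomorphisme sur $L$ et sur $\C$. Il faut fixer la trace, ou l'image du Frobenius dans l'idéal $\frak p$ comme le fait le texte, pour que l'énoncé et votre comptage aient un sens.
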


C'est ce résultat, que l'on pourrait appeler \og théorème principal de la multiplication complexe sur les corps finis\fg, qui est à la base du cryptosystème de Couveignes, Rostovtsev et Stolbunov. Le reste de cette section est consacré à la preuve de ce résultat, qui est le dernier outil nécessaire à la construction du protocole cryptographique.

\subsection{Au sujet de la preuve du théorème principal}

Une première idée pour atteindre les corps finis est de quotienter un anneau dans un corps de nombres par un idéal. Partant d'une courbe elliptique définie sur $\F_p$, on se demande par exemple si l'on peut considérer un relèvement à $\Z$, pour lequel les résultats sont connus vu que $\Z\subset\C$, et redescendre tout cela ensuite. 

Le problème inverse est de partir d'une courbe elliptique définie sur $\Q$ et étudier ses réductions modulo $p$. Si l'on se donne par exemple une courbe elliptique $E$ définie sur $\Q$, alors $E$ admet un modèle à coefficients dans $\Z$. Pour réduire $E$ en caractéristique $p$, on cherche d'abord à éliminer toutes les puissances possibles de $p$ pour trouver un \emph{modèle minimal} en $p$. Deux cas se présentent alors : soit la courbe réduite modulo $p$ est lisse (c'est alors une courbe elliptique et on dit que $E$ a \emph{bonne réduction} en $p$), soit elle ne l'est pas (elle dégénère en un n\oe ud ou une pointe et on dit que $E$ a \emph{mauvaise réduction}). Étudier la réduction de courbes définies sur $\Q$, ou plus généralement sur un corps de nombres est une question d'arithmétique vaste.

\v

Cette question de relèvement est ancienne, et on dispose des théorèmes suivants, dus à Deuring et dont une preuve figure dans \cite{Lang}. On se donne un corps fini $k$ de caractéristique $p$, et on note $x\mapsto\bar{x}$ les réductions modulo $p$.

\begin{thm}[Relèvement d'un endomorphisme en caractéristique zéro]

Soit $E/k$ une courbe elliptique et $\alpha$ un endomorphisme de $E$. Alors il existe un corps de nombres galoisien $L$, une courbe elliptique $E^0/L$, un endomorphisme $\alpha^0$ de $E^0$ et un premier $\frak p$ de $L$ au-dessus de $p$, tels que $E^0$ a bonne réduction en $\frak p$, $E$ est isomorphe à $\bar{E^0}$ et $\alpha$ correspond à $\bar{\alpha^0}$ sous cet isomorphisme.

\end{thm}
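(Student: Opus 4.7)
Le plan est de relever d'abord $(E,\alpha)$ en caractéristique nulle via $\C$ en utilisant la théorie de la multiplication complexe exposée précédemment, puis de descendre à un corps de nombres, et enfin d'identifier la réduction modulo un premier convenable avec $E$. Le cas où $\alpha \in \Z$ se traite à part : il suffit de relever naïvement les coefficients d'une équation de Weierstraß de $E$ à $\Z$, la multiplication par $n$ étant définie sur toute courbe elliptique. On suppose donc $\alpha \notin \Z$, auquel cas $\O' := \Z[\alpha]$ est un sous-anneau commutatif sans diviseur de zéro, de rang 2 sur $\Z$ (l'équation quadratique vérifiée par $\alpha$ vient de la forme quadratique degré sur $\End(E)$), donc un ordre dans le corps quadratique imaginaire $K = \Q(\alpha)$. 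Ceci vaut à la fois dans le cas ordinaire et dans le cas supersingulier (où l'on oublie la structure quaternionique et ne retient que l'ordre commutatif engendré par $\alpha$).

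La première étape serait de construire sur $\C$ la courbe $\tilde E = \C/\O'$ à multiplication complexe par $\O'$, dans laquelle $\alpha$ agit naturellement par multiplication. Les résultats classiques de la théorie CM sur $\C$ (intégralité de $j(\tilde E)$ et théorie du corps de classes) assurent alors que $\tilde E$ et tous ses endomorphismes descendent à un modèle sur le corps de classes $H$ de $\O'$, extension abélienne finie de $K$. En prenant pour $L$ la clôture galoisienne de $H$ sur $\Q$, on dispose d'un corps de nombres galoisien, d'une courbe $E^0/L$ et d'un endomorphisme $\alpha^0 \in \End(E^0)$ formant un relèvement de $(E,\alpha)$ à $\C$.

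Il reste à passer en caractéristique $p$ : on veut choisir un premier $\frak p$ de $L$ au-dessus de $p$ en lequel $E^0$ a bonne réduction et $\bar{E^0}\simeq E$, avec $\bar{\alpha^0}$ correspondant à $\alpha$. La bonne réduction potentielle en tout premier au-dessus de $p$ vient de l'intégralité de $j(E^0)$ ; quitte à étendre $L$ par une extension galoisienne finie, la réduction devient effective. Pour identifier la fibre spéciale à $E$, on utilise la proposition précédente : dans le cas ordinaire, l'action de $\Cl(\O')$ est simplement transitive à la fois sur $\Ell_\C(\O')$ et sur $\Ell_k(\O')$, et la réduction modulo $\frak p$ commute à l'action par isogénies, donc induit une bijection entre ces deux ensembles de même cardinal. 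On peut alors translater $E^0$ par un idéal $\frak a \in \Cl(\O')$ adéquat de sorte que sa réduction soit isomorphe à $E$. L'identification $\bar{\alpha^0} = \alpha$ se vérifie sur la différentielle invariante, laquelle se comporte bien par réduction.

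L'obstacle principal tient à cette dernière étape : la compatibilité fine entre la réduction et l'action du groupe de classes repose sur le fait que les sous-schémas en groupes $E^0[\frak a]$ se spécialisent correctement sur $E[\frak a]$, résultat non trivial de la théorie de Deuring, qui est lui-même étroitement lié au théorème à prouver. Un autre point délicat, moins visible, est que le théorème énoncé ne précise pas la structure de $\End(E^0)$ ; on construit en fait un relèvement avec $\End(E^0) \simeq \O'$, plus fort que la simple existence de $\alpha^0$, ce qui est essentiel pour le théorème suivant de la théorie CM sur les corps finis.
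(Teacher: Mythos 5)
Votre plan achoppe sur une circularité que vous signalez d'ailleurs vous-même. Dans le mémoire, ce théorème n'est pas démontré : il est attribué à Deuring et renvoyé à \cite{Lang}, puis sert d'ingrédient pour établir la proposition de transitivité simple de $\Cl(\O)$ sur $\Ell_k(\O)$. Or votre dernière étape invoque précisément cette proposition, ainsi que la bijectivité de la réduction $\Ell_\C(\O')\to\Ell_k(\O')$ : mais la surjectivité de cette réduction est exactement le contenu du théorème à prouver. Dire que l'on peut translater $E^0$ par un idéal $\frak a$ de sorte que sa réduction soit isomorphe à $E$, c'est affirmer que $E$ est dans l'image de la réduction, c'est-à-dire (pour $\alpha$ engendrant $\O'$) l'énoncé lui-même. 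De même, la \og compatibilité fine\fg\ que vous admettez --- la spécialisation de $E^0[\frak a]$ sur $E[\frak a]$ --- est précisément le \og fait admis\fg\ qui troue déjà la preuve du théorème principal dans le mémoire ; on ne peut donc pas l'emprunter sans boucle logique.

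Deux points supplémentaires. D'une part, le cas supersingulier n'est pas couvert : si $\End(E)$ est un ordre quaternionique, $E$ n'appartient pas à $\Ell_k(\Z[\alpha])$ et l'argument d'action du groupe de classes ne peut identifier la fibre spéciale à $E$ ; le même problème apparaît dans le cas ordinaire dès que $\Z[\alpha]$ est strictement inclus dans $\End(E)$, puisque $\Ell_k(\O')$ ne contient que les courbes dont l'anneau d'endomorphismes est exactement $\O'$. D'autre part, la partie \og montante\fg\ de votre argument (construction de $\C/\O'$, action de $\alpha$ par multiplication, descente à un corps de nombres via l'intégralité de $j$ et le corps de classes, bonne réduction potentielle) est correcte et correspond au début des preuves classiques ; mais l'identification de la réduction avec la courbe donnée exige un autre outil (modules de Tate, relèvement du noyau d'une isogénie convenable, comme chez Deuring--Lang ou dans l'approche de Waterhouse citée en fin de section), et non l'action de $\Cl(\O)$ sur $\Ell_k(\O)$, qui dans l'ordre logique adopté ici en est une conséquence.
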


En particulier, si $\End(E)$ est isomorphe à un ordre $\O$, on peut choisir pour $\alpha$ un générateur de $\End(E)$ et on obtient une surjection $\End(E^0)\vers \End(E).$ Le résultat suivant montre en fait que dans ce cas, on obtient une bijection entre les anneaux d'endomorphismes.

\begin{thm}[Bonne réduction]

Soit $L$ un corps de nombres, $E/L$ une courbe elliptique telle que $\End(E)\simeq \O$ est un ordre dans un corps quadratique imaginaire $K$. Soit $p$ un nombre premier et $\frak p$ un premier de $L$ au-dessus de $p$, en lequel $E$ a bonne réduction. Alors $\bar{E}$ est ordinaire si et seulement si $p$ est totalement scindé dans $K$. Dans ce cas, si $c=p^r c_0$ est le conducteur de $\O$, avec $c_0 \wedge p = 1$, on a :

\begin{itemize}
\item[(i)] $\End(\bar{E})=\Z+c_0 \O_K$ est l'ordre de $K$ de conducteur $c_0$.
\item[(ii)] Si $c=c_0$, alors la réduction donne un isomorphisme $\End(E)\vers\End(\bar{E})$.
\end{itemize}
De plus, $\End(E)\vers\End(\bar{E})$ préserve le degré.

\end{thm}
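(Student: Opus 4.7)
La première étape consiste à construire la flèche de réduction $\End(E)\to\End(\bar{E})$ et à établir qu'elle préserve le degré. Par fonctorialité de la bonne réduction, toute isogénie $\phi\in\End(E)$ se réduit en $\bar\phi\in\End(\bar{E})$ de façon compatible avec composition, addition et dualité. La relation $\phi\,\widehat\phi=[\deg\phi]_E$ se réduit alors en $\bar\phi\,\widehat{\bar\phi}=[\deg\phi]_{\bar{E}}$, de degré $(\deg\phi)^2$, ce qui force $\deg\bar\phi=\deg\phi$. On obtient ainsi l'injectivité de la flèche de réduction et le dernier énoncé du théorème, et un plongement d'anneaux $\O\hookrightarrow\End(\bar{E})=:\O'$.

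La deuxième étape est l'équivalence entre ordinarité de $\bar{E}$ et scindage total de $p$ dans $K$. Si $\bar{E}$ est supersingulière, $\End(\bar{E})\otimes\Q$ est l'algèbre de quaternions $B_{p,\infty}$ ramifiée en $p$ et $\infty$; un corps quadratique imaginaire s'y plonge si et seulement si $p$ n'y est pas totalement scindé (critère classique sur les algèbres centrales simples). Le plongement $K\hookrightarrow B_{p,\infty}$ interdit donc à $p$ d'être scindé. Réciproquement, supposons $\bar{E}$ ordinaire : le Frobenius $\pi$ vérifie $\pi^2-t\pi+q=0$ avec $\gcd(t,p)=1$, et le discriminant $t^2-4q$ est alors une unité $p$-adique, donc $p$ est non ramifié dans $K=\Q(\pi)$. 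Il ne peut pas non plus y être inerte, sinon les valuations forceraient $\pi$ à être de la forme $u\cdot p^{n/2}$ avec $u$ unité de $\O_K$, d'où $p\mid t$, contradiction. Donc $p$ est scindé.

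La troisième étape, qui constitue l'obstacle principal, identifie $\O'$ à l'ordre $\Z+c_0\O_K$ de conducteur $c_0$. L'inclusion $\O=\Z+p^rc_0\O_K\subset\O'\subset\O_K$ montre déjà que le conducteur de $\O'$ divise $p^rc_0$; il reste donc à prouver qu'il est premier à $p$. Pour cela, j'analyserais l'action de $\O'$ sur le groupe $p$-divisible $\bar{E}[p^\infty]$ : puisque $\bar{E}$ est ordinaire, celui-ci se décompose en une partie étale de hauteur 1 et un groupe formel de Lubin--Tate de hauteur 1, dont les anneaux d'endomorphismes valent chacun $\Z_p$. On en déduit $\O'\otimes\Z_p\simeq\Z_p\times\Z_p=\O_K\otimes\Z_p$, et la composante $p$-primaire du conducteur de $\O'$ est donc triviale, d'où (i). Le point (ii) est alors immédiat : si $c=c_0$, on a $\O=\O'$, et la flèche injective de la première étape est un morphisme d'anneaux envoyant un générateur sur un élément de même polynôme minimal, donc sur un générateur, ce qui assure la bijectivité.
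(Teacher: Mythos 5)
Le mémoire ne démontre pas ce théorème (il l'attribue à Deuring et renvoie à \cite{Lang}); je compare donc votre esquisse à la preuve classique. Vos deux premières étapes (préservation du degré par réduction, équivalence entre ordinarité de $\bar{E}$ et scindage de $p$ dans $K$) sont correctes dans leurs grandes lignes. En revanche, la troisième étape comporte une vraie lacune: de l'inclusion $\O\subset\O'\subset\O_K$ et de la primalité à $p$ du conducteur de $\O'$, vous ne tirez que la divisibilité du conducteur de $\O'$ par un diviseur de $c_0$, c'est-à-dire $\O'\supseteq\Z+c_0\O_K$. Or le point (i) affirme l'égalité: il faut encore exclure que la réduction crée de nouveaux endomorphismes aux premiers $\ell\mid c_0$, $\ell\neq p$, autrement dit montrer que $c_0$ divise le conducteur de $\O'$ — c'est le c\oe ur de l'énoncé, et il manque. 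Le remède standard: pour $\ell\neq p$, la bonne réduction identifie $T_\ell E$ et $T_\ell\bar{E}$ de façon compatible aux endomorphismes; comme $T_\ell E$ est libre de rang 1 sur $\O\otimes\Z_\ell$ (le réseau de $E$ est un idéal propre, donc inversible, de $\O$), tout élément de $K\otimes\Q_\ell$ qui stabilise ce module appartient à $\O\otimes\Z_\ell$, d'où $\O'\otimes\Z_\ell\subseteq\O\otimes\Z_\ell$. Le point (ii) repose précisément sur cette inclusion manquante ($\O'\subseteq\O$ lorsque $c=c_0$), donc il hérite de la lacune tel que vous le rédigez.

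Par ailleurs, votre argument de $p$-maximalité est un non sequitur tel quel: du plongement $\O'\otimes\Z_p\hookrightarrow\Z_p\times\Z_p$ (via la décomposition connexe--étale de $\bar{E}[p^\infty]$) on ne peut pas conclure $\O'\otimes\Z_p\simeq\Z_p\times\Z_p$, car les ordres non maximaux $\Z_p+p^k(\Z_p\times\Z_p)$ se plongent eux aussi dans $\Z_p\times\Z_p$. Pour conclure par cette voie il faudrait un théorème de type Tate en $p$ (l'application $\End(\bar{E})\otimes\Z_p\vers\End(\bar{E}[p^\infty])$ est un isomorphisme sur un corps fini), ce que vous n'invoquez pas. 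Il y a un raccourci bien plus simple, déjà contenu dans votre deuxième étape: puisque $\bar{E}$ est ordinaire, $t^2-4q$ est une unité $p$-adique, donc $\Z_p[\pi]$ est étale sur $\Z_p$, c'est-à-dire égal à l'ordre maximal $\O_K\otimes\Z_p$; comme $\pi\in\O'$, on en déduit directement que le conducteur de $\O'$ est premier à $p$.
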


On fixe également un ordre $\O$ dans un corps quadratique imaginaire, que l'on voit dans $\C$. Si $\O$ est l'anneau d'endomorphismes d'une certaine courbe définie sur $k$, alors par le théorème de bonne réduction, on a une égalité d'idéaux
$$(p) = \frak{p}\, \sigma( \frak{p})$$
dans $\O$, où $\sigma$ désigne la conjugaison complexe. Si $E$ a multiplication complexe par $\O$, il existe exactement deux isomorphismes entre $\End(E)$ et $\O$, l'un envoyant le Frobenius de $E$ dans $\frak p$, l'autre dans $\sigma(\frak p)$. On convient que quitte à faire agir $\sigma$, on considère le Frobenius comme un élément de $\frak p$. C'est l'analogue de l'isomorphisme canonique de la section précédente (notons d'ailleurs que la réduction modulo $\frak p$ rend les deux conventions compatibles).

Tentons donc de démontrer avec ces outils le théorème principal de la multiplication complexe sur les corps finis.

\begin{proof}

Soit $E$ une courbe elliptique définie sur $k$, et $E^0$ un relèvement de $E$ en caractéristique zéro donné par les théorèmes de Deuring. Soit également $\frak a$ un idéal de $\O$. Comme $E^0$ a multiplication complexe par $\O$, l'idéal $\frak a$ agit sur $E^0$:
$$E^0 \overset{\phi_{\frak a}^0}{\vers} E'^0.$$
On peut montrer que $E'^0$ et $\phi_{\frak a}^0$ sont définis sur une extension finie de $\Q$.

Soit $L$ l'extension de $\Q$ et $\frak p$ la place de $L$ donnés par le théorème de relèvement. Notons $M$ une extension de $L$ sur laquelle $E^0$, $E'^0$ et $\phi_{\frak a}^0$ sont définis, et soit $\frak P$ une place de $M$ au-dessus de $\frak p$. Le quotient $M/\frak P$ est donc une extension finie de $k = L/\frak p$. Comme on sait que $E^0$ a bonne réduction en $\frak p$, on peut tout réduire modulo $\frak P$, et on obtient
$$E \overset{\phi}{\vers} E'$$
où $\phi$ est définie sur une extension finie de $k$. Le degré de $\phi$ est égal à celui de $\phi_{\frak a}^0$, dont le noyau est $E^0[\frak a]$, et le noyau de $\phi$ contient les réductions à $k$ des éléments de $E^0[\frak a]$. 

Nous admettons maintenant un ingrédient essentiel, à savoir le fait suivant. Pour tout idéal $\frak a$ inversible de $\O$ de norme première à $p$, et toute courbe $E\in \Ell_k(\O)$, on peut choisir un relèvement $E^0$ vérifiant les conditions suivantes:
\begin{itemize}
\item[•] Les éléments de $E[\alpha]$ sont entiers en $\frak p$
\item[•] La réduction modulo $\frak p$ donne une bijection $E^0[\frak a] \overset{\sim}{\to} E[\frak a]$.
\end{itemize}

Avec ce fait admis, on montre donc que le noyau de $\phi$ est précisément $E[\frak a]$. Autrement dit, $\phi$ est, à isomorphisme près, l'isogénie $\phi_{\frak a}$, et on a $E' = \frak a\cdot E$. La courbe $\frak a\cdot E$ est donc la réduction de $\frak a\cdot E^0$. D'autre part, on peut utiliser le théorème de bonne réduction pour voir que $\frak a\cdot E$ a, comme $E'^0$, multiplication complexe par l'ordre $\O$. On en déduit que l'on a une action
$$\Cl(\O) \circlearrowright \Ell_k(\O).$$

Montrons qu'elle est transitive : si $E$ et $E'$ sont données en caractéristique $p$, choisissons deux relèvements $E^0, E'^0$ en caractéristique 0. Il existe alors un idéal $\frak a$ envoyant $E^0$ sur $E'^0$, par transitivité dans $\C$. En réduisant on voit que $\phi_{\frak a}$ a bien pour image $E'$.

Enfin, étant donné $E$, montrons que son stabilisateur est l'ensemble des idéaux principaux. Si $\frak a$ est principal, il laisse un relèvement $E^0$ invariant, et donc $E$ également. Réciproquement, si $\frak a$ laisse $E$ invariante, alors $\phi_{\frak a}$ admet un relèvement $\psi\in \End(E^0)$. Comme $\psi$ et $\phi_{\frak a}^0$ ont même degré, on voit que ces isogénies ont même noyau, ce qui montre $\psi=\phi_{\frak a}^0$ (modulo un isomorphisme) et $\frak a\cdot E^0 = E^0$. Ainsi $\frak a$ est principal, ce qui termine la preuve.
\end{proof}
\v

Cette preuve \og trouée\fg\ semble raisonnablement directe, mais nous ne sommes pas parvenus à la compléter, ni à en trouver une complétion dans la littérature. On peut simplement remarquer que le fait admis est vrai lorsque $\frak a$ est l'idéal engendré par un entier $n$ premier à $p$, ce qui est plutôt inutile pour notre preuve, car il s'agit en particulier d'un idéal principal.

Une autre façon d'adapter aux corps finis les résultats obtenus sur $\C$ est d'utiliser les modules de Tate, qui sont des espaces naturels attachés aux courbes elliptiques sur lesquels $\End(E)$ agit de manière agréable. C'est la méthode suivie par Waterhouse \cite{Waterhouse}.

\newpage

\section{Courbes modulaires}

Une fois que l'on sait que l'action du groupe de classes est simplement transitive, on cherche naturellement à la calculer. Par exemple, pour calculer l'action d'un idéal de norme $\ell$, un nombre premier, il s'agit de déterminer une isogénie de degré $\ell$ au départ d'une courbe donnée.

On en arrive alors rapidement à la notion de courbe modulaire. Ces courbes paramétrisent la donnée d'une courbe elliptique munie d'une certaine structure: on dit qu'elles représentent un \emph{problème modulaire}. L'objectif de cette section est de montrer l'existence de ces courbes (ou au moins certaines d'entre elles) en toute généralité, c'est à dire en tant que schéma, et d'en calculer des équations. La référence principale est ici \cite{KaMa}. Comme précédemment, l'étude du cas complexe permet de motiver et d'introduire l'étude générale.

\subsection{Courbes modulaires sur $\C$}

Cherchons par exemple un espace classifiant les courbes elliptiques sur $\C$ munies d'un point de $N$-torsion ($N\geq 2$) à isomorphisme près. L'usage est de désigner ces objets par la lettre $Y$ complétée de quelques indices. Comme on l'a déjà mentionné, l'espace des courbes elliptiques complexe à isomorphisme près est le demi-plan de Poincaré quotienté par l'action du groupe $\Gamma(1) = \mathrm{SL}_2(\Z)$, ce que l'on note:
$$Y(1)_\C = \Gamma(1) \b \H.$$
La fonction modulaire $j$ donne un isomorphisme de cette surface de Riemann vers $\C$.

Soit $\tau\in \H$. La courbe elliptique $E_\tau = \C/(\Z+\Z\tau)$ a un point de $N$-torsion primitif naturel qui est simplement $\frac{1}{N}$. Cette donnée supplémentaire n'est pas invariante par tous les éléments de $\Gamma(1)$, seulement par ceux du sous-groupe
$$\Gamma_1(N) = \left\{\left(
\begin{matrix}
a & b \\
c & d
\end{matrix}
\right) \in \mathrm{SL}_2(\Z)\ :\ a = d = 1 \mod{N},\ c = 0\mod{N}\right\}.$$
On montre alors qu'en effet, une courbe elliptique complexe munie d'un point de $N$-torsion est isomorphe à un unique couple $\left(E_\tau, \frac{1}{N}\right)$ avec $\tau\in \Gamma_1(N) \b \H$. On note cela
$$Y_1(N)_\C = \Gamma_1(N) \b \H.$$

Qu'en est-t-il si l'on relâche la condition d'un point primitif de $N$-torsion, pour simplement demander un sous-groupe cyclique d'ordre $N$ ? Là encore, si $\tau\in \H$, $E_\tau$ a un groupe cyclique de cardinal $N$ naturel qui est simplement $\left\{\frac{a}{N},\ 0\leq a\leq N-1\right\}$. Cette donnée supplémentaire est invariante par un groupe plus grand que $\Gamma_1(N)$, à savoir
$$\Gamma_0(N) = \left\{\left(
\begin{matrix}
a & b \\
c & d
\end{matrix}
\right) \in \mathrm{SL}_2(\Z)\ :\ c = 0\mod{N}\right\}.$$
De même que précédemment, étant donné une courbe elliptique et un sous-groupe cyclique de cardinal $N$, il existe une unique $E_\tau$ munie de cette donnée supplémentaire qui lui soit isomorphe modulo $\Gamma_0(N)$, autrement dit
$$Y_0(N)_\C = \Gamma_0(N) \b \H.$$
Remarquons que ce problème modulaire est aussi celui de l'isogénie cyclique de degré $N$: la donnée d'un sous-groupe cyclique de $E$ de cardinal $N$ est équivalente à celle d'une isogénie cyclique partant de $E$ de degré $N$. Le terme \emph{cyclique} signifie ici que le noyau est cyclique.

\v

On peut compactifier les courbes $\Gamma \b\H$ pour tout sous-groupe de congruence $\Gamma$ de $\Gamma(1)$ en y ajoutant un nombre fini de points de $\P^1(\Q)$, appelés \emph{pointes}. On obtient alors des surfaces de Riemann compactes, qui sont donc des courbes algébriques, et que l'on notera avec la lettre $X$: ainsi $X(1)_\C = \P^1(\C)$. Pour étudier ces courbes, une direction naturelle est de se demander quelles sont leurs fonctions et formes différentielles, ne serait-ce que pour en trouver une équation. Bien sûr, ce sont simplement des fonctions modulaires de différents poids pour $\Gamma$, et l'on voit un premier lien entre les courbes elliptiques et les formes modulaires.

\subsection{Calcul d'équations pour les courbes modulaires}

Le but étant d'utiliser une courbe modulaire pour calculer des isogénies entre courbes elliptiques sur un corps fini, il est important de disposer d'équations pour ce genre de courbes. Mis à part le cas de $X(1)$, qui est isomorphe à la droite projective par le $j$-invariant, il est difficile de déterminer de telles équations à la main.

On aimerait travailler avec des équations planes, quitte à accepter des équations définissant des courbes singulières lorsque l'on est capable de contrôler les points singuliers. Une application birationnelle vers une courbe plane (projective) étant simplement la donnée de deux fonctions sur la courbe qui engendrent son corps de fonctions, on recherche tout d'abord ces dernières (par exemple sous forme de $q$-expansion) puis une équation polynomiale les reliant. La variable $q$ est définie par $q = e^{2i\pi\tau}$ lorsque $\tau$ parcourt $\H$, et la plupart des fonctions modulaires admettent une expression sous forme de série de Laurent en $q$.
\v

\textbf{Le cas $Y_0(N)$.} Ce cas est particulier, d'une part car il s'agit de la principale courbe modulaire utilisée dans ce document, et d'autre part car on peut en trouver une équation \og sans calcul \fg. On a vu que l'on peut reformuler le problème modulaire associé à $Y_0$ sous la forme du problème de $N$-isogénie cyclique $E\to E'$. On peut alors définir une application informelle
$$(E\to E') \longmapsto (E, E'),$$
ce qui suggère que l'application
$$\begin{aligned}
Y_0(N)_\C &\vers \C^2 \\
 \tau &\longmapsto (j(\tau), j(N\tau))
\end{aligned}$$
est bien définie, ce que l'on peut vérifier à la main.

Les fonctions modulaires $j(\tau)$ et $j(N\tau)$ sont simples, et on peut trouver une équation avec un papier et un crayon de la manière suivante. L'idée est que $\tau\mapsto N\tau$ est l'action d'une certaine matrice, et que l'on veut symétriser cela pour obtenir un polynôme à coefficients dans $\Z$. On définit
$$C(N)=\left\{ 
\left(
\begin{matrix}
a & b \\
0 & d 
\end{matrix}
\right)
\in \M_2(\Z)\ :\ ad=N,\ a>0,\ 0\leq b<d,\ \mathrm{pgcd}(a,b,d)=1\right\}.$$
et les $(\sigma^{-1}\Gamma(1)\sigma)\cap \Gamma(1)$ pour $\sigma\in C(N)$ sont exactement les classes à droite de $\Gamma_0(N)$ dans $\Gamma(1)$.

On montre alors qu'il existe un unique  polynôme $\Phi_N \in \Z[X,Y]$, appelé $N$-ième \emph{polynôme modulaire}, tel que
$$\forall \tau\in\H,\ \Phi_N(X,j(\tau))=\prod_{\sigma\in C(N)} (X-j(\sigma\tau)).$$
Ce polynôme est de plus symétrique en $X, Y$ et irréductible dans $\Z[X, Y]$. Ainsi l'image de l'application $Y_0(N)_\C\vers \C^2$ est exactement le lieu des zéros de ce polynôme $\Phi_N$. Les propriétés de $\Phi_N$ sont démontrées par exemple dans \cite{Sil2}.

Cependant, on ne peut pas vraiment dire que $\Phi_N=0$ définit une équation de la courbe modulaire $Y_0(N)_\C$, car le morphisme $Y_0(N)\vers \C^2$ n'est pas injectif. En effet, il existe des couples de réseaux $(\Lambda, \Lambda')$ tels qu'il existe plusieurs inclusions $\Lambda\vers\Lambda'$ de conoyau $\Z/N\Z$. Ces points singuliers comprennent notamment des points de $Y(1)$ ayant multiplication complexe par $\Z[i]$ ou $\Z[e^{\frac{2i\pi}{3}}]$, ce qui implique l'existence d'automorphismes non triviaux, mais ce ne sont pas les seuls. Pour contrôler ces points singuliers, on peut faire la remarque suivante. Si $\Lambda, \Lambda'$ sont comme ci-dessus, alors on dispose de deux isogénies cycliques de degré $N$:
$$\phi_1, \phi_2\de \C/\Lambda \vers \C/\Lambda'.$$
La composée $\widehat{\phi_1} \phi_2$ est un endomorphisme de la courbe elliptique $\C/\Lambda$ qui n'est pas scalaire, et est de degré $N^2$. Cela montre que $\C/\Lambda$ (et également $\C/\Lambda'$) sont des courbes à multiplication complexe, et dont le discriminant est borné par $N^2$. Déterminer plus précisément les points doubles de l'équation $\Phi_N = 0$ semble moins évident, mais cette borne grossière suffira dans ce mémoire.

Ainsi la courbe $\Phi_N=0$ détermine une courbe plane singulière qui, sur $\C$, est birationnelle à $Y_0(N)$. En dehors des points doubles, on dispose donc tout de même d'une équation qui permet, étant donnée une courbe, de déterminer les courbes que l'on peut atteindre par une isogénie cyclique de degré $N$.

\v

\textbf{Calcul du polynôme modulaire.} D'une certaine manière, la définition précédente du polynôme modulaire est inutile, car elle ne donne pas de moyen direct de le calculer (on apprend tout de même son degré, qui est $N+1$, et qu'il est symétrique à coefficients dans $\Z$). Une méthode consiste à utiliser les $q$-expansions, avec $q = e^{2i\pi\tau}$, des fonctions modulaires $j(\tau)$ et $j(N\tau)$, que l'on obtient à partir de celles des fonctions de Weierstrass:
$$\begin{aligned}
j(\tau) &= \frac{1}{q} + 744 + 196884 q + 21493760 q^2 + 864299970 q^3 + \ldots\\
j(N\tau) &= \frac{1}{q^N} + 744 + 196884 q^N + 21493760 q^{2N} + 864299970 q^{3N} + \ldots
\end{aligned}$$
On peut calculer les premières puissances de ces fonctions modulaires, puis rechercher une équation polynomiale (symétrique) les reliant en faisant de l'algèbre linéaire. On trouve
$$\begin{aligned}
\Phi_2(X, Y) &=  X^3 + Y^3 - X^2 Y^2 + 1488(X^2Y + X Y^2)  - 162000(X^2 + Y^2) \\
&\quad + 40773375 XY  + 8748000000(X + Y) - 157464000000000\\
\Phi_3(X, Y) &= X^4 + Y^4 - X^3 Y^3 + 2232(X^3 Y^2 + X^2 Y^3) - 1069956(X^3 Y + X Y^3)\\
&\quad + 2587918086 X^2 Y^2  + 36864000(X^3 + Y^3) + 8900222976000(X^2 Y + X Y^2)  \\
&\quad + 452984832000000(X^2 + Y^2) - 770845966336000000 XY \\
&\quad + 1855425871872000000000(X + Y)\\
\Phi_5(X, Y) &= X^6 + Y^6 -X^5 Y^5 + 3720(X^5 Y^4 + X^4 Y^5) - 4550940(X^5 Y^3 + X^3 Y^5) \\
&\quad + 1665999364600 X^4 Y^4 + 2028551200(X^5 Y^2 + X^2 Y^5) \\
&\quad + 107878928185336800(X^4 Y^3 + X^3 Y^4) - 246683410950(X^5 Y + X Y^5) \\
&\quad + 383083609779811215375(X^4 Y^2 + X^2 Y^4) - 441206965512914835246100 X^3 Y^3 \\
&\quad + 1963211489280(X^5 + Y^5) + 128541798906828816384000(X^4 Y + X Y^4) \\
&\quad + 26898488858380731577417728000(X^3 Y^2 + X^2 Y^3) \\
&\quad +  1284733132841424456253440(X^4 + Y^4) \\
&\quad - 192457934618928299655108231168000(X^3 Y + X Y^3) \\
&\quad + 5110941777552418083110765199360000 X^2 Y^2 \\
&\quad + 280244777828439527804321565297868800(X^3 + Y^3) \\
&\quad + 36554736583949629295706472332656640000(X^2 Y + X Y^2) \\
&\quad + 6692500042627997708487149415015068467200( X^2 + Y^2) \\
&\quad - 264073457076620596259715790247978782949376 XY \\
&\quad + 53274330803424425450420160273356509151232000(X + Y) \\
&\quad + 141359947154721358697753474691071362751004672000
\end{aligned}$$
et l'on s'arrête ici de peur de rapidement doubler la longueur de ce mémoire: le stockage du polynôme modulaire $\Phi_N$ en mémoire a un coût d'environ $\Theta(N^3)$ bits. On peut en fait calculer ces polynômes plus efficacement, comme Elkies l'indique dans \cite{Elkies}.
Des bases de données de polynômes modulaires se trouvent dans le logiciel de calcul formel Sage \cite{Sage} (qui contient également une riche base de formes modulaires) jusqu'à environ $N = 100$, ou sur la page web d'Andrew Sutherland (\url{math.mit.edu/~drew}), par exemple.

En pratique, lorsque la taille de ces polynômes devient problématique, on peut utiliser d'autres fonctions modulaires qui donnent de meilleurs résultats que $j(\tau)$ et $j(N\tau)$, par exemple la fonction de Weber. On ne gagne asymptotiquement qu'un facteur constant: une équation modulaire de grand niveau reste de toute manière volumineuse.

\v

\textbf{Le cas général.} Cette méthode est adaptable directement au cas d'autres courbes modulaires, dès que l'on est capable d'exhiber des fonctions modulaires du bon niveau qui engendrent son corps des fonctions. On peut ensuite faire de l'algèbre linéaire sur les $q$-expansions si l'on ne connaît pas a priori d'équation polynomiale satisfaite par ces fonctions modulaires. On verra dans ce mémoire deux exemples de cette méthode, pour les courbes $X_0(30)$ et $X_1(17)$.

Nous avons travaillé jusqu'à présent sur $\C$, mais il est crucial de savoir que ces équations, en particulier celles de $X_0(N)$, restent valables sur un corps quelconque (en particulier sur les corps finis).

Les premiers résultats concernant la réduction de courbes modulaires en caractéristique positive sont dus à Igusa. Il montre que les courbes modulaires classiques de niveau $N$ sont définies sur $\Q$, et ont bonne réduction et une interprétation modulaire en tous les premiers $p$ ne divisant pas $N$. C'est le contenu du théorème suivant:

\begin{thm}[Courbes modulaires en toute caractéristique] Soit $N\geq 3$ un entier.

\begin{itemize}

\item[(i)] La courbe modulaire $Y_0(N)_\C$ peut être définie sur $\Q$.
\item[(ii)] Pour tout $p \nmid N$, celle-ci a bonne réduction en $p$, c'est à dire qu'elle se réduit en une courbe lisse définie sur $\F_p$.
\item[(iii)] Pour tout corps $k$ dans lequel $N$ est inversible, la courbe $Y_0(N)_k$ a une interprétation modulaire. Les $k$-points de $Y_0(N)_k$ sont en bijection avec les classes d'isomorphisme (sur $\bar{k}$) d'isogénies cycliques de degré $N$ entre deux courbes elliptiques sur $k$.
\item[(iv)] Pour tout corps $k$, il existe un morphisme de schémas
$$Y_0(N) \vers \A^1\times \A^1$$
donné par le $j$-invariant comme dans le cas complexe. Son image est exactement le lieu des zéros du polynôme modulaire $\Phi_N$.

\end{itemize}

\end{thm}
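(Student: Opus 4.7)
Mon plan consiste à construire $Y_0(N)$ comme quotient d'une courbe modulaire auxiliaire $Y(N)$ paramétrant les courbes elliptiques munies d'une structure de niveau complète, en suivant la démarche de \cite{KaMa}. On établit ainsi simultanément les quatre points du théorème.

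D'abord, je considère le problème modulaire $\mathcal{P}$ qui à un $\Z[1/N]$-schéma $S$ associe l'ensemble des classes d'isomorphisme de couples $(E/S, \alpha)$, où $E$ est une courbe elliptique sur $S$ et $\alpha : (\Z/N\Z)^2_S \overset{\sim}{\to} E[N]$ est une trivialisation du sous-schéma en groupes de $N$-torsion (étale puisque $N$ est inversible sur la base). Sous l'hypothèse $N\geq 3$, la rigidité de ce problème, c'est-à-dire l'absence d'automorphisme non trivial d'une courbe elliptique fixant sa $N$-torsion point par point, entraîne sa représentabilité par un schéma affine $Y(N)$ lisse sur $\Z[1/N]$. \emph{C'est ici que repose tout le poids du théorème} : l'argument requiert l'étude fine de $E[N]$ comme schéma en groupes étale, et un recollement à partir de représentants locaux, le tout formalisé dans \cite{KaMa}. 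L'hypothèse $N\geq 3$ est cruciale pour écarter les problèmes provenant de l'automorphisme $[-1]$.

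Dans un deuxième temps, je récupère $Y_0(N)$ comme quotient de $Y(N)$ par l'action à droite du sous-groupe de Borel supérieur de $GL_2(\Z/N\Z)$, qui préserve la droite $\Z/N\Z\times\{0\}$. Comme $Y(N)$ est affine, ce quotient existe comme schéma ; il est lisse sur $\Z[1/N]$ grâce à la tamité de l'action, l'ordre du Borel divisant une puissance de $N$ et donc inversible en tout premier $p \nmid N$. Ceci fournit les points (i) et (ii). Le point (iii) en découle également : un $k$-point de $Y_0(N)_k$ correspond à une orbite de Borel de couples $(E,\alpha)$ sur $\bar k$, et l'image par $\alpha$ de $\Z/N\Z\times\{0\}$ définit un sous-groupe cyclique canonique d'ordre $N$ de $E$, soit, de manière équivalente, une isogénie cyclique $E\to E/C$ de degré $N$ bien définie à isomorphisme près.

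Pour le point (iv), le morphisme $(E,C)\mapsto (j(E), j(E/C))$ est fonctoriel sur le problème modulaire et définit donc un morphisme de schémas $Y_0(N)\to \A^1\times\A^1$ sur $\Z[1/N]$. Il reste à identifier son image fermée. Or, sur $\C$, on sait déjà par le calcul via les matrices de $C(N)$ que cette image est précisément le lieu $V(\Phi_N)$ ; comme $\Phi_N$ est à coefficients entiers, cette identification passe au modèle sur $\Z[1/N]$ par densité du lieu complexe, puis à tout corps $k$ par spécialisation, ce qui conclut. L'obstacle principal reste donc la représentabilité de $Y(N)$ sur $\Z[1/N]$ : immédiate sur $\C$ via le demi-plan de Poincaré, elle exige en toute généralité l'appareil complet de \cite{KaMa}, et c'est pourquoi je l'utilise ici comme une boîte noire plutôt que de tenter de la reconstruire.
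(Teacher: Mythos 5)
Votre route diffère de celle du mémoire : vous rigidifiez par la structure de niveau $N$ complète et construisez $Y_0(N)$ comme quotient de $Y(N)$ par le Borel de $\mathrm{GL}_2(\Z/N\Z)$, alors que le texte rigidifie par des structures auxiliaires de niveau 2 et 3 (les problèmes $\Qr_2$ et $\Qr_3$ de Legendre), recolle les solutions obtenues sur $\Z[1/2]$ et $\Z[1/3]$, constate que le problème $\Gamma_0(N)$ n'est pas rigide (donc pas représentable, à cause de $[-1]$), et invoque alors le schéma de modules grossier. Les deux démarches sont classiques et votre décomposition est en soi légitime ; l'avantage de celle du mémoire est de traiter d'emblée la non-rigidité de $Y_0(N)$, ce qui est exactement le point où votre argument pèche.

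Le trou concret est au point (ii). Vous justifiez la lissité du quotient par la « tamité » de l'action, « l'ordre du Borel divisant une puissance de $N$ » : c'est faux. Le Borel supérieur de $\mathrm{GL}_2(\Z/N\Z)$ est d'ordre $N\,\varphi(N)^2$, qui n'est en général ni une puissance de $N$ ni inversible sur $\Z[1/N]$ (pour $N=5$ il vaut $80$, divisible par $2$ alors que $2\nmid N$). De plus l'action du Borel sur $Y(N)$ n'est pas libre : $-I$ agit trivialement (via l'automorphisme $[-1]$ de la courbe) et les points correspondant à $j=0$ ou $1728$ ont des stabilisateurs non triviaux ; le quotient n'est donc qu'un schéma de modules grossier, sa formation ne commute pas automatiquement au changement de base vers les fibres, et sa lissité sur $\Z[1/N]$ — c'est précisément le théorème d'Igusa — ne se déduit pas d'un énoncé général « quotient modéré d'un schéma lisse est lisse ». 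C'est là que \cite{KaMa} (ou Deligne--Rapoport) font le vrai travail, et votre preuve le court-circuite. Par ricochet, le point (iii) demande aussi la propriété de modules grossiers sur les points à valeurs dans un corps (comme l'énonce la proposition du mémoire), et pas seulement la description des orbites du Borel sur $\bar{k}$ que vous donnez.
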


Un résultat similaire est valable pour les autres types de courbes modulaires que l'on a vus ci-dessus. L'objectif du reste de cette section est de donner une idée d'une preuve de ce résultat.

\subsection{Le formalisme des problèmes modulaires}

Il est impossible de définir ici tout le vocabulaire de la théorie des schémas utilisé dans cette partie. Deux références pour cela: le livre de Hartshorne \cite{Hart} et surtout le Stacks project \cite{Stack}.

Soit $S$ un schéma. On appelle \emph{courbe elliptique} sur $S$ un $S$-schéma
$$\E \overset{\pi}{\vers} S$$
qui est une courbe propre et lisse, dont les fibres géométriques sont connexes de genre 1, munie d'une section notée $\O_\E\de S\vers \E$.
 
La condition sur les fibres signifie que pour tout point géométrique $x$ de $S$ (c'est à dire tout morphisme $x\de\Spec k\vers S$ où $k$ est un corps algébriquement clos), la \emph{fibre} de $\E$ en $x$, c'est à dire le $k$-schéma
$$\E \times_S \Spec k\ \vers\ \Spec k$$
est connexe et de genre 1. C'est de plus une courbe propre et lisse (ces propriétés sont préservées par changement de base) munie de la section $0 \times x$, donc une courbe elliptique sur le corps $k$ au sens usuel. Ainsi, on peut voir une courbe elliptique relative $\E/S$ comme une famille de courbes elliptiques \og compatibles \fg\ sur des corps variables, les points géométriques de $S$. Lorsque $S = \Spec k$, on retrouve la définition précédente d'une courbe elliptique sur un corps.

On peut montrer que $\E$ admet une structure naturelle de $S$-schéma en groupes commutatif qui étend la structure de schéma en groupe définie sur les fibres géométriques : c'est le théorème d'Abel.
\v

On note $(\Ell)$ la catégorie dont les objets sont les courbes elliptiques relatives
$$\E \overset{\pi}{\vers} S$$
pour des schémas arbitraires $S$, et dont les morphismes sont les carrés commutatifs cartésiens
$$
\shorthandoff{;:!?}
\xymatrix @!=8mm {
\E' \ar[d]^{\pi'} \ar[r]  & \E \ar[d]^{\pi} \\
 S' \ar[r] & S
}
$$
c'est à dire ceux pour lesquels on a $\E' \simeq \E \times_S S'$.

On appelle \emph{problème modulaire} pour les courbes elliptiques un foncteur contravariant $\Pr$ de $(\Ell)$ vers la catégorie des ensembles. Un élément de $\Pr(\E/S)$ est appelé \emph{structure de niveau} $\Pr$ sur $\E/S$. Lorsque l'on se restreint à prendre $S$ dans les $R$-schémas et aux morphismes de $R$-schémas, on obtient la sous-catégorie $(\Ell/R)$. Un \emph{problème modulaire sur $R$} est un foncteur contravariant comme ci-dessus défini sur cette sous-catégorie.

Les exemples étudiés dans le cas complexe sont des problèmes modulaires: par exemple, le problème modulaire correspondant à $Y_0(N)$ associe à une courbe elliptique $E$ l'ensemble des sous-groupes finis plats, localement libres de rang $N$ et cycliques dans $E[N]$.

\vspace{5mm}

On dit qu'un problème modulaire $\Pr$ est \emph{relativement représentable} si pour toute courbe elliptique $\E/S$, le foncteur (contravariant) des $S$-schémas vers les ensembles
$$T \ \longmapsto\ \Pr(\E\times_S T / T)$$
est représentable par un $S$-schéma noté $\Pr_{\E/S}$. Cela signifie que l'on a un isomorphisme fonctoriel en $T$ :
$$\Pr(\E\times_S T / T) \simeq \Pr_{\E/S}(T) = \Hom_{S-\text{sch.}}(T, \Pr_{\E/S}).$$
Si $\Pr$ est relativement représentable, on dit qu'il vérifie une certaine propriété si tous les morphismes structurels $\Pr_{E/S}\vers S$ ont cette propriété (par exemple, être étale, fini, surjectif, affine). Pour résumer, un problème modulaire $\Pr$ est relativement représentable si pour toute courbe elliptique fixée, les structures de niveau $\Pr$ de cette courbe sur une base variable $T$ correspondent aux $T$-points d'un certain schéma.

On dit que $\Pr$ est \emph{représentable} s'il est représentable en tant que foncteur sur $(\Ell$), c'est à dire qu'il existe une courbe elliptique relative
$$E \overset{p}\vers \M(\Pr)$$
telle qu'on ait un isomorphisme fonctoriel dans $(\Ell)$ :
$$\Pr(\E/S) \simeq \Hom_{(\Ell)}(\E/S, E/\M(\Pr)).$$
Dans ce cas, on dispose d'un élément universel $a\in \Pr(E/\M(\Pr))$ induit par l'identité de $E/\M(\Pr)$.

On remarque que si le problème $\Pr$ est représentable par $E\vers \M(\Pr)$, alors $\M(\Pr)$ représente le foncteur suivant dans la catégorie des schémas :
$$S\ \longmapsto\ \left.
\begin{cases}\ \text{Classes\ d'isomorphisme\ des\ paires}\ (\E/S,\alpha)\ \text{où}\ \\
 \ \E/S\ \text{est\ une\ courbe\ elliptique\ relative\ et}\ \alpha\in \Pr(\E/S)
\end{cases} \right\}.$$
C'est de cette manière que l'on utilisera la représentabilité dans ce document.

\v

On donne deux exemples de problèmes modulaires représentables : d'une part la donnée de deux points de 2-torsion qui engendrent $E[2]$, d'autre part la donnée de deux points de 3-torsion qui engendrent $E[3]$, que l'on notera respectivement $\Qr_2$ et $\Qr_3$ (il faut en fait ajouter une condition de compatibilité avec l'accouplement de Weil). Le premier est représentable sur l'anneau $\Z[1/2,\lambda][1/\lambda(\lambda -1)]$ par la courbe de Legendre:
$$ E\de y^2 = x (x-1) (x-\lambda),$$
et le second par la courbe
$$E\de y^2 + a_1 x y + a_3 y = x^3$$
où $a_1 = 3C - 1$ et $a_3 = -3 C^2 - B - 3 BC$, munie des deux points
$$P_3 = (0,0), \quad Q_3 = (C, B+C)$$
sur l'anneau
$$\frac{\Z[1/3, B, C][1/(a_1^3 - 27 a_3)a_3 C]}{B^3 = (B+C)^3}.$$

En dehors de ces exemples très simples, on ne sait pas donner explicitement un schéma représentant un problème modulaire. Il faut invoquer une construction plus abstraite: on peut montrer d'abord la relative représentabilité, puis invoquer un résultat de \emph{rigidité} pour en déduire la représentabilité. Cet argument est détaillé dans la partie suivante, mais ne donne pas accès à des équations du schéma construit.

\subsection{Représentabilité et rigidité}

Une première obstruction à la représentabilité des problèmes modulaires est la notion de \emph{rigidité}. On dit que $\Pr$ est \emph{rigide} si pour toute structure $\alpha$ de niveau $\Pr$ sur $\E/S$, la paire $(\E/S, \alpha)$ n'admet pas d'automorphismes non triviaux. Une conséquence immédiate est formelle de la définition de représentabilité est la suivante: tout problème modulaire représentable est rigide. Un fait important est que cette obstruction est essentiellement la seule.

\begin{thm}[Représentabilité des problèmes rigides]
Tout problème modulaire affine relativement représentable et rigide est représentable.
\end{thm}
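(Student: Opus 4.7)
Le plan est de combiner $\Pr$ avec un problème modulaire auxiliaire représentable et rigide $\Qr$ de groupe de symétries fini $G$, puis de passer au quotient par l'action de $G$. Comme les problèmes rigides représentables connus (par exemple $\Qr_2$ et $\Qr_3$) ne le sont que sur un ouvert de $\Spec \Z$, on doit utiliser deux choix complémentaires pour construire $\M(\Pr)$ par recollement.

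Concrètement, on utiliserait les deux problèmes $\Qr_2$ et $\Qr_3$ rappelés précédemment, représentables respectivement sur $\Z[1/2]$ et $\Z[1/3]$, par des schémas affines $\M(\Qr_i)$ munis d'une courbe universelle $E_i$. On formerait d'abord le problème modulaire combiné $[\Pr, \Qr_i]$ consistant à se donner simultanément une structure $\Pr$ et une structure $\Qr_i$ sur la même courbe. Ce problème serait encore rigide (tout automorphisme d'une structure $(\E,\alpha,\beta)$ est en particulier un automorphisme de $(\E,\alpha)$, donc trivial par rigidité de $\Pr$) et relativement représentable. La représentabilité relative de $\Pr$ fournirait alors un $\M(\Qr_i)$-schéma $X_i := \Pr_{E_i/\M(\Qr_i)}$ qui représente $[\Pr, \Qr_i]$, affine sur $\M(\Qr_i)$ par hypothèse d'affinité, donc affine.

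Ensuite, le groupe $G_i$ des symétries de $\Qr_i$ agirait naturellement sur $X_i$ en modifiant la $\Qr_i$-structure. La rigidité de $\Pr$ rendrait cette action libre : un point fixe $(\E,\alpha,\beta)$ par $g \in G_i$ correspondrait à un automorphisme $\phi$ de $(\E,\alpha)$ avec $\phi^*\beta = g^{-1}\beta$, donc $\phi = \id$ par rigidité de $\Pr$, et alors $g$ est l'identité par rigidité de $\Qr_i$. Comme $G_i$ est fini et $X_i$ affine, on disposerait du quotient $Y_i := X_i / G_i = \Spec \O(X_i)^{G_i}$, un schéma affine, avec $X_i \vers Y_i$ étale galoisien de groupe $G_i$. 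La courbe elliptique universelle sur $X_i$ et sa $\Pr$-structure, étant $G_i$-équivariantes, descendraient à $Y_i$ en une courbe universelle munie d'une $\Pr$-structure.

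Il resterait enfin à montrer que $Y_i$ représente $\Pr$ sur la sous-catégorie des $\Z[1/i]$-schémas, puis à recoller $Y_2$ et $Y_3$ au-dessus de $\Z[1/6]$ en $\M(\Pr)$. C'est ce dernier point qui serait le plus délicat : étant donnée une paire $(\E/S,\alpha)$ avec $S$ au-dessus de $\Z[1/i]$, on obtiendrait étale-localement sur $S$ une $\Qr_i$-structure $\beta$, d'où un morphisme $U \vers X_i$ ; deux choix de $\beta$ différant par un élément de $G_i$, la composée $U \vers Y_i$ serait indépendante du choix, et la rigidité de $\Pr$ (absence d'automorphismes non triviaux de $(\E,\alpha)$) garantirait la compatibilité sur les recouvrements nécessaire à la descente en un morphisme global $S \vers Y_i$. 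C'est ici que la rigidité joue un rôle crucial : sans elle, on obtiendrait au mieux un champ algébrique, et non un schéma représentant le foncteur.
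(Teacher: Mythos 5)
Votre démonstration suit essentiellement la même stratégie que celle du mémoire : représentation du problème simultané via $X_i = \Pr_{E_i/\M(\Qr_i)}$ pour $\Qr_2$ et $\Qr_3$, action libre du groupe fini grâce à la rigidité de $\Pr$, existence du quotient affine, descente de la courbe universelle munie de sa $\Pr$-structure, puis recollement au-dessus de $\Z[1/6]$. Seule petite imprécision : le fait que $g\beta = \beta$ entraîne $g = \mathrm{id}$ ne découle pas de la rigidité de $\Qr_i$ mais du fait que $(\Qr_i)_{\E/S}$ est un $G_i$-torseur (action simplement transitive de $G_i$ sur les $\Qr_i$-structures), comme dans le lemme du texte.
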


C'est la stratégie que nous suivons pour obtenir la représentabilité des problèmes modulaires donnés plus haut. Montrer leur relative représentabilité est plus facile, car on travaille sur une courbe fixée: cette question est étudiée dans \cite{KaMa}, (3.7).

Soit $\Pr$ un problème modulaire affine, rigide, et relativement représentable. Le premier ingrédient de la preuve est le suivant: si $\Qr$ est représentable et $\Pr$ est relativement représentable, on montre que le problème modulaire \og simultané\fg\ $\Pr\times\Qr$ est représentable par le schéma 
$$\M(\Pr, \Qr) = \Pr_{E/\M(\Qr)}.$$

L'idée de la preuve du théorème de représentabilité est alors de choisir un problème représentable $\Qr$ particulièrement simple, regarder le schéma $\M(\Pr, \Qr)$, et le \emph{quotienter} par quelque chose pour faire disparaître le problème $\Qr$. En guise de $\Qr$, on utilise le problème modulaire de Legendre $\Qr_2$. Il n'est défini que sur $\Z[1/2]$, donc on regarde la même construction avec le problème $\Qr_3$, qui lui est défini sur $\Z[1/3]$. Autrement dit, on obtient une solution

\begin{itemize}
\item[$\bullet$] Pour tous les schémas où 2 est inversible, d'une part;
\item[$\bullet$] Pour tous les schémas où 3 est inversible, d'autre part.
\end{itemize}
Moralement, la rigidité de $\Pr$ impose à ces deux solutions de coïncider lorsqu'elles le doivent, et on peut donc \og recoller\fg tout cela en une solution pour tous les schémas.

Donnons maitenant le résultat qui fait marcher cette construction.

\begin{lem}
Soit $N\geq 1$ un entier, $G$ un groupe fini, et $\Qr$ un problème modulaire affine et relativement représentable. On suppose de plus que $\Qr$ est représentable par un schéma affine sur $\Z[1/N]$ et que $G$ agit sur $\Qr$ de telle sorte que pour toute courbe elliptique
$$\E\vers S\vers \Spec \Z[1/N],$$
le $S$-schéma $\Qr_{\E/S}$ soit un $G$-torseur fini étale. Alors $\Pr$ est représenté par le $\Z[1/N]$-schéma affine
$$\M(\Pr,\Qr)/G.$$
\end{lem}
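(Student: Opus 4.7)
Le plan consiste à construire le schéma affine $M := \M(\Pr,\Qr)/G$, à y définir une courbe elliptique universelle munie d'une $\Pr$-structure par descente le long d'un torseur étale, puis à vérifier la propriété universelle. La rigidité de $\Pr$ interviendra comme l'ingrédient clef permettant à la descente d'être effective et sans ambiguïté.

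\textbf{Quotient et courbe universelle.} Par le résultat qui précède l'énoncé, $\M(\Pr,\Qr) = \Pr_{E/\M(\Qr)}$ représente le problème simultané $\Pr\times\Qr$. L'affinité supposée de $\Pr$ rend le morphisme structurel $\M(\Pr,\Qr)\vers \M(\Qr)$ affine, et $\M(\Qr)$ est affine sur $\Z[1/N]$ par hypothèse, de sorte que $\M(\Pr,\Qr)$ l'est aussi. L'action de $G$ sur $\Qr$ se transporte naturellement en une action sur $\M(\Pr,\Qr)$ via le facteur $\Qr$. L'hypothèse de torseur, appliquée à la courbe elliptique universelle $E_1$ portée par $\M(\Pr,\Qr)$, montre que $\M(\Pr,\Qr)\vers M$ est lui-même un $G$-torseur fini étale, et donc que le quotient $M$ existe comme schéma affine sur $\Z[1/N]$. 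Notons $(E_1,\alpha_1,\beta_1)$ le triplet universel sur $\M(\Pr,\Qr)$. La rigidité de $\Pr$ garantit que la paire $(E_1/\M(\Pr,\Qr),\alpha_1)$ n'admet aucun automorphisme non trivial: les données de descente le long du torseur étale $\M(\Pr,\Qr)\vers M$ sont donc automatiques et effectives, et fournissent un couple $(E/M,\alpha\in\Pr(E/M))$ dont le tiré en arrière à $\M(\Pr,\Qr)$ redonne $(E_1,\alpha_1)$.

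\textbf{Vérification de l'universalité.} Étant donnée une courbe elliptique relative $\E/S$ sur $\Z[1/N]$ et une structure $\alpha'\in \Pr(\E/S)$, on forme le $G$-torseur $T := \Qr_{\E/S}\vers S$. Par définition, $\E_T := \E\times_S T$ porte une $\Qr$-structure tautologique $\beta_T\in\Qr(\E_T/T)$. Le couple $(\alpha'|_T,\beta_T)$ définit un morphisme $T\vers \M(\Pr,\Qr)$ par universalité de ce dernier. Par construction, ce morphisme est $G$-équivariant, et descend donc en un morphisme $f_{\alpha'}\colon S\vers M$. Inversement, partant d'un morphisme $f\colon S\vers M$, on tire en arrière le couple universel $(E,\alpha)$ pour obtenir une paire $(\E/S,\alpha')\in \Pr(\E/S)$. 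Il reste à vérifier que ces deux procédures sont mutuellement inverses.

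\textbf{Obstacle principal.} Le cœur technique du résultat est la descente le long d'un $G$-torseur étale: il s'agit de descendre une courbe elliptique et sa structure de niveau depuis $\M(\Pr,\Qr)$ jusqu'à $M$. La descente fidèlement plate est ici rendue possible par un argument général: en l'absence d'automorphismes, les isomorphismes de recollement imposés par la structure de torseur sont uniques et se composent correctement, et la descente est donc effective. C'est précisément à ce stade que la rigidité de $\Pr$ se révèle indispensable; elle joue également un rôle pour garantir que les deux procédures de l'étape précédente sont inverses l'une de l'autre, en éliminant toute ambiguïté dans les identifications le long du torseur $T\vers S$.
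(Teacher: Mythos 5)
Votre plan suit pour l'essentiel celui du texte (existence du quotient, descente de la courbe universelle munie de sa $\Pr$-structure, vérification de la propriété universelle), mais l'étape d'existence du quotient comporte une vraie lacune. Vous écrivez que l'hypothèse de torseur, appliquée à la courbe universelle $E_1$ portée par $\M(\Pr,\Qr)$, montre que $\M(\Pr,\Qr)\vers M$ est un $G$-torseur fini étale \og et donc que le quotient $M$ existe\fg. L'argument est circulaire (on ne peut pas qualifier de torseur un morphisme vers un objet dont l'existence n'est pas encore acquise) et, surtout, l'hypothèse ne porte pas sur le bon schéma : elle affirme que $\Qr_{E_1/\M(\Pr,\Qr)}$ est un $G$-torseur \emph{au-dessus de} $\M(\Pr,\Qr)$, ce qui ne dit rien a priori de la liberté de l'action de $G$ sur $\M(\Pr,\Qr)$ lui-même. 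C'est précisément là que la rigidité doit intervenir une première fois : si $g\in G$ fixe un $T$-point $(\E/T,\alpha,\beta)$, on dispose d'un isomorphisme $(\E/T,\alpha,\beta)\simeq(\E/T,\alpha,g\beta)$ que la rigidité de $\Pr$ force à être l'identité, d'où $g\beta=\beta$, puis $g=e$ par la propriété de torseur de $\Qr_{\E/T}$. C'est cette liberté (universelle) de l'action, jointe au théorème de Grothendieck sur le quotient d'un schéma affine par un groupe fini agissant librement, qui fournit l'existence de $M$ et le fait que la projection $\M(\Pr,\Qr)\vers M$ est un $G$-torseur fini étale --- fait dont vous vous servez ensuite constamment (descente, équivariance, descente du morphisme $T\vers\M(\Pr,\Qr)$). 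Dans votre rédaction, la rigidité n'est invoquée que pour la descente ; l'argument de liberté manque.

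Deux remarques secondaires sur la descente elle-même. D'une part, l'absence d'automorphismes ne rend pas les données de descente \og automatiques\fg : il faut encore construire, pour chaque $g\in G$, un isomorphisme $\theta(g)\de g^*(E_1,\alpha_1)\simeq (E_1,\alpha_1)$ ; on l'obtient en classifiant $(E_1,\alpha_1,g\beta_1)$ par la propriété universelle de $\M(\Pr,\Qr)$ (l'action de $G$ ne modifie pas la composante $\Pr$), la rigidité ne servant qu'à garantir l'unicité de ces isomorphismes et la condition de cocycle. D'autre part, l'effectivité de la descente ne découle pas de l'absence d'automorphismes : c'est une propriété géométrique de la situation (descente fidèlement plate le long d'un morphisme fini localement libre, les objets descendus étant affines ou munis d'un fibré ample relatif), indépendante de la rigidité. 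En revanche, votre vérification finale de la propriété universelle via le torseur $T=\Qr_{\E/S}$ est correctement esquissée, et plus détaillée que dans le texte, qui renvoie à \cite{KaMa} pour ce point.
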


\begin{proof}[Démonstration du lemme.] On a dit que le problème simultané $(\Pr, \Qr)$ est représentable par
$$\M(\Pr, \Qr) = \Pr_{E/\M(\Qr)}.$$
Comme $\Pr$ est supposé affine, ce schéma est affine sur $\M(\Qr)$ qui est lui-même affine, donc est affine. De plus, le groupe $G$ agit sur $\M(\Pr, \Qr)$, car celui-ci représente le foncteur $\Pr\times\Qr$, et on peut faire agir fonctoriellement $G$ sur la composante $\Qr$.

\newcommand{\univ}{\mathrm{univ}}

Considérons alors la courbe elliptique universelle pour $\Pr\times\Qr$ :
$$E \vers \M(\Pr,\Qr)$$
munie de $(\alpha_{\univ}, \beta_{\univ}) \in (\Pr\times\Qr)(E/\M(\Pr,\Qr)).$ La preuve se déroule alors en trois temps :
\begin{itemize}
\item[•] Montrer l'existence du quotient $\M(\Pr,\Qr)/G$ ;
\item[•] Descendre $(E, \alpha_{\univ})$ à $\M(\Pr,\Qr)/G$, c'est à dire remplir le diagramme cartésien
$$
\shorthandoff{;:!?}
\xymatrix {
(E,\alpha_{\univ}) \ar[d] \ar@{.>}[r] &\ \textbf{?}\ \ar@{.>}[d] \\
 \M(\Pr,\Qr) \ar[r] & \M(\Pr,\Qr)/G
}
$$
\item[•] Montrer que la courbe descendue représente bien le problème modulaire $\Pr$ sur $\Z[1/N]$.
\end{itemize}

Pour le premier point, il suffit de montrer que le groupe fini $G$ agit librement sur le schéma affine $\M(\Pr,\Qr)$; alors on sait que le quotient existe par le théorème de Grothendieck, et que la projection
$$\M(\Pr,\Qr)\vers \M(\Pr,\Qr)/G$$
est un $G$-torseur fini étale (et surjectif, donc fidèlement plat). Pour cela, il suffit de montrer que l'action est \emph{universellement libre}. Soit $T$ un $\Z[1/N]$-schéma. On a vu que $\M(\Pr, \Qr)(T)$ est l'ensemble des courbes elliptiques sur $T$ munies d'une $(\Pr,\Qr)$-structure à isomorphisme près. Si un élément $g\in G$ laisse un élément invariant, cela signifie que l'on a un certain isomorphisme
$$(\E/T, \alpha, \beta) \simeq (\E/T, \alpha, g\beta).$$
Par rigidité de $\Pr$, cet isomorphisme doit être l'identité. Ainsi on a $g\beta = \beta$, et par hypothèse $\Qr_{E/T}$ est un $G$-torseur, ce qui implique que $g$ est lui-même trivial. Cela conclut la preuve que l'action de $G$ sur $\M(\Pr,\Qr)$ est libre.

Pour descendre $E, \alpha_{\univ}$ par ce quotient, on est dans le cadre général de la descente fidèlement plate. On cherche une \emph{donnée de descente}, c'est à dire un ensemble d'isomorphismes pour $g\in G$:
$$\theta(g)\ :\ g^*(E, \alpha_{\univ}) \overset{\sim}{\vers} (E, \alpha_{\univ})$$
qui forment un 1-cocycle (i.e. ils sont compatibles à la composition). On peut obtenir une telle donnée de la façon suivante. Pour tout $g\in G$, $(E, \alpha_{\univ}, g\beta_{\univ})$ est une courbe elliptique sur $\M(\Pr, \Qr)$ munie d'une $(\Pr,\Qr)$-structure, donc est classifiée par un unique morphisme
$$\theta(g) \de \M(\Pr,\Qr)\vers \M(\Pr,\Qr)$$
qui donne un isomorphisme au-dessus de $\M(\Pr,\Qr)$:
$$g^*(E, \alpha_\univ, \beta_\univ) \overset{\sim}{\vers} (E, \alpha_\univ, g\beta_\univ)$$
et il suffit d'oublier $\beta_\univ$. La compatibilité avec la composition provient directement de la rigidité de $\Pr$.

Ainsi $E$ et $\alpha_\univ$ descendent, et on a rempli le carré cartésien précédent avec un objet $(E_0,\alpha_{\univ, 0})$ sur $\M(\Pr,\Qr)/G$.

Il reste maintenant à montrer que l'on obtient ainsi un représentant du problème $\Pr$. 
Pour cette vérification finale, on renvoie à la preuve originale dans \cite{KaMa}.

\end{proof}

\v

L'objectif de cette section est atteint si le lecteur s'est fait une idée du type de constructions impliquées dans la construction de schémas \og non triviaux\fg. On peut tirer de cette preuve d'autres renseignements.

\begin{cor}
Sous les hypothèses du théorème précédent, si de plus $\Pr$ est étale, alors $\M(\Pr)$ est une courbe affine lisse sur $\Z$.
\end{cor}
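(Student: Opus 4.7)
Le plan est de déduire ce corollaire du théorème précédent en passant par la construction explicite du lemme, appliquée avec $\Qr = \Qr_2$ sur $\Z[1/2]$ puis avec $\Qr = \Qr_3$ sur $\Z[1/3]$. On obtient alors, pour $N\in\{2,3\}$, des isomorphismes
$$\M(\Pr)\otimes_\Z \Z[1/N] \;\simeq\; \M(\Pr,\Qr_N)/G_N,$$
et puisque $\{\Spec\Z[1/2], \Spec\Z[1/3]\}$ recouvre $\Spec\Z$ pour la topologie de Zariski, il suffira de vérifier l'affinité, la lissité et la dimension relative $1$ après restriction à chacun de ces deux ouverts.

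Pour la lissité et la dimension relative, je procéderais par composition puis descente. Les équations explicites données plus haut pour la courbe de Legendre et son analogue en 3-torsion présentent $\M(\Qr_2)$ et $\M(\Qr_3)$ comme des ouverts de $\A^1$ sur $\Z[1/2]$ et $\Z[1/3]$ respectivement, donc comme des schémas affines lisses de dimension relative $1$. L'hypothèse \og $\Pr$ étale\fg\ dit exactement que le morphisme structurel
$$\M(\Pr,\Qr_N) \;=\; \Pr_{E/\M(\Qr_N)} \;\vers\; \M(\Qr_N)$$
est étale, et par composition $\M(\Pr,\Qr_N)$ est affine, lisse, de dimension relative $1$ sur $\Z[1/N]$. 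Il reste alors à passer au quotient par $G_N$ : la preuve du lemme fournit que $\M(\Pr,\Qr_N) \vers \M(\Pr,\Qr_N)/G_N$ est un $G_N$-torseur fini étale, donc étale surjectif et fidèlement plat; je conclurais par la descente fppf des propriétés locales (lissité et dimension relative).

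Pour l'affinité globale, j'invoquerais que la propriété \og être un morphisme affine\fg\ est locale sur la base pour la topologie de Zariski : comme $\M(\Pr) \vers \Spec\Z$ devient affine après changement de base à $\Z[1/2]$ et à $\Z[1/3]$, il l'est globalement, et $\Spec\Z$ étant affine, $\M(\Pr)$ est bien un schéma affine sur $\Z$.

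Le point que j'attends comme principale difficulté est la descente de la lissité à travers le quotient par $G_N$ : elle repose sur la localité fppf (ou déjà étale) de la lissité, qu'on applique au recouvrement fourni par le torseur lui-même. C'est la seule étape qui dépasse la simple manipulation formelle ; les autres vérifications se ramènent soit à une composition de morphismes explicites, soit au découpage Zariski de $\Spec\Z$ par les ouverts standard $\Spec\Z[1/2]$ et $\Spec\Z[1/3]$.
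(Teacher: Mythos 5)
Votre démonstration suit essentiellement la même route que celle du mémoire : même recouvrement de $\Spec\Z$ par $\Spec\Z[1/2]$ et $\Spec\Z[1/3]$, même chaîne étale $\M(\Pr,\Qr_N)\vers\M(\Qr_N)$, même passage au quotient par $G_N$ vu comme torseur fini étale, et même vérification directe sur les anneaux de $\M(\Qr_2)$ et $\M(\Qr_3)$. Vous explicitez un peu plus que le texte l'argument de \og descente le long du torseur\fg\ (qui est en fait la localité de la lissité pour la topologie lisse sur la source, EGA~IV~17.7.7, plutôt qu'une descente fppf au sens strict) et le caractère Zariski-local sur la base de la propriété d'être un morphisme affine ; c'est correct et utile.

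Une seule imprécision à corriger : $\M(\Qr_3)$ n'est pas un ouvert de $\A^1_{\Z[1/3]}$. Après simplification par $C$ inversible, la relation $B^3=(B+C)^3$ devient $3B^2+3BC+C^2=0$, une conique qui ne se rationalise qu'après adjonction de $\zeta_3$ (le discriminant du polynôme $3u^2+3u+1$ en $u=B/C$ est $-3$). Le schéma $\M(\Qr_3)$ est donc un ouvert de $\A^1_R$ pour $R=\Z[1/3][u]/(3u^2+3u+1)$, une algèbre finie étale de degré 2 sur $\Z[1/3]$, et non un ouvert de $\A^1_{\Z[1/3]}$ lui-même. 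La conclusion dont vous avez besoin — $\M(\Qr_3)$ est une courbe affine lisse de dimension relative 1 sur $\Z[1/3]$ — reste bien sûr vraie et se vérifie directement sur cet anneau (le Jacobien de $3B^2+3BC+C^2$ ne s'annule qu'en l'origine, qui est exclue), mais le raccourci \og ouvert de $\A^1$\fg\ ne vaut que pour la courbe de Legendre.
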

En effet, on a obtenu dans la preuve une construction du schéma affine $\M(\Pr)$ par recollement de quotients finis de $\M(\Pr, \Qr_2)$ et $\M(\Pr, \Qr_3)$. Or, les deux morphismes
$$\M(\Pr, \Qr_2)\vers \M(\Qr_2),\quad \M(\Pr, \Qr_3)\vers \M(\Qr_3)$$
sont étales, car $\Pr$ est supposé étale. Il suffit donc de montrer que $\M(\Qr_2)$ et $\M(\Qr_3)$ sont des courbes lisses sur $\Z$, car cette propriété est préservée par les morphismes étales et les quotients par un groupe fini agissant librement. Cette dernière assertion se vérifie directement sur les anneaux en question.

\begin{cor}
Pour tout entier $N\geq 3$, le problème modulaire du point d'ordre $N$ (correspondant à $Y_1(N)$) est représentable par une courbe affine lisse sur $\Z[1/N]$, notée $Y_1(N)$.
\end{cor}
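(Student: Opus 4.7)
Le plan est d'appliquer le corollaire précédent au problème modulaire $\Pr$ qui associe à toute courbe elliptique $\E/S$ (où $S$ est un $\Z[1/N]$-schéma) l'ensemble des sections $P\de S\to \E$ définissant un point d'ordre exact $N$ sur chaque fibre géométrique. Quatre propriétés sont alors à vérifier : la représentabilité relative, l'affinité, l'étaleté, et la rigidité. Le corollaire précédent fournira alors directement la courbe affine lisse $Y_1(N)$ sur $\Z[1/N]$ représentant $\Pr$.

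Pour les trois premières propriétés, on exploite que, $N$ étant inversible sur $S$, le sous-schéma $\E[N]\to S$ est fini étale de rang constant $N^2$, étale-localement isomorphe à $(\Z/N\Z)^2\times S$. Le sous-schéma des sections d'ordre exact $N$ s'obtient en retirant de $\E[N]$ la réunion $\bigcup_{d\mid N,\ d<N}\E[d]$ ; étale-localement, c'est la réunion disjointe des copies de $S$ indexées par les éléments primitifs de $(\Z/N\Z)^2$. Ainsi $\Pr_{\E/S}$ est représenté par un $S$-schéma fini étale, donc automatiquement affine sur $S$. Ceci règle simultanément la représentabilité relative, l'affinité et l'étaleté de $\Pr$.

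L'obstacle principal est la rigidité. On se ramène aux fibres géométriques et il s'agit de vérifier que, pour toute courbe elliptique $E$ sur un corps algébriquement clos $k$ de caractéristique première à $N$ et tout point $P\in E(k)$ d'ordre exact $N$, le seul automorphisme $\alpha$ de $E$ fixant $\O_E$ et envoyant $P$ sur $P$ est l'identité. L'involution $[-1]$ envoie $P$ sur $-P\neq P$ puisque l'hypothèse $N\geq 3$ exclut la $2$-torsion, ce qui traite le cas générique où $\mathrm{Aut}(E,\O_E)=\{\pm 1\}$. Aux deux $j$-invariants exceptionnels $0$ et $1728$, les automorphismes supplémentaires agissent sur l'espace tangent en $\O_E$ par une racine primitive $4$-ième ou $6$-ième de l'unité ; un calcul explicite sur un modèle de Weierstrass court (du type $y^2=x^3-x$, respectivement $y^2=x^3+1$) montre que leurs lieux de points fixes sur $E$ sont contenus dans $E[2]$ et $E[3]$ respectivement, et l'on conclut qu'aucun d'eux ne peut fixer un point d'ordre exact $N$ dans la plage considérée. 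Cette analyse de cas aux deux $j$-invariants spéciaux constitue le point véritablement délicat de la preuve et le seul endroit où l'hypothèse sur $N$ intervient de façon non triviale.

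Les quatre hypothèses étant vérifiées, le corollaire précédent s'applique directement et fournit la courbe affine lisse sur $\Z[1/N]$ représentant $\Pr$, qui est par définition la courbe modulaire $Y_1(N)$.
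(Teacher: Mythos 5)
Votre démarche est exactement celle du mémoire : vérifier que le problème du point d'ordre $N$ est relativement représentable, affine et étale (via la structure finie étale de $E[N]$ lorsque $N$ est inversible sur la base), puis rigide, et appliquer le corollaire précédent. Ces trois premières vérifications sont correctes et correspondent à ce que le texte se contente d'affirmer sans détail.

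En revanche, votre argument de rigidité échoue précisément au point que vous désignez vous-même comme délicat, et la lacune est réelle : pour $N=3$ le problème n'est pas rigide. Sur la courbe $y^2=x^3+1$ (de $j$-invariant nul, en caractéristique différente de $2$ et $3$), l'automorphisme d'ordre $3$ donné par $(x,y)\mapsto(\zeta_3 x,\,y)$ fixe les points d'inflexion $(0,\pm 1)$, qui sont d'ordre exact $3$ ; la paire $(E,P)$ admet donc un automorphisme non trivial. Votre propre calcul — lieu des points fixes contenu dans $E[3]$ — n'exclut donc rien lorsque $N=3$, et la conclusion \og aucun d'eux ne peut fixer un point d'ordre exact $N$ dans la plage considérée\fg\ est fausse dans ce cas. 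La rigidité, et donc la représentabilité par un schéma fin, ne vaut que pour $N\geq 4$ (c'est l'énoncé de Katz--Mazur) ; pour $N=3$ on ne dispose que d'un schéma de modules grossier. L'énoncé du mémoire, formulé pour $N\geq 3$, souffre du même décalage et sa preuve se borne à affirmer la rigidité ; votre version, plus détaillée, devrait donc exclure le cas $N=3$ (ou l'exhiber comme contre-exemple), et dire également un mot des caractéristiques $2$ et $3$, permises sur $\Z[1/N]$ dès qu'elles ne divisent pas $N$, où les groupes d'automorphismes sont plus gros que ceux de vos modèles de Weierstrass courts.
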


En effet, on a vu que ce problème modulaire est relativement représentable, fini et étale donc en particulier affine. D'autre part, il est rigide: on peut donc appliquer le résultat précédent.

On a donc vu comment étendre les résultats obtenus sur $\C$ à n'importe quel corps pour le problème rigide associé à $Y_1(N)$. En revanche cela ne s'applique pas à $Y_0(N)$, qui est non rigide car l'action de $[-1]$ est triviale: ce problème modulaire n'est pas représentable. 

\v

Lorsque l'on dispose d'un problème modulaire relativement représentable et affine sur $(\Ell/R)$, on peut tout de même lui associer un schéma appelé \emph{schéma de modules grossier}, noté $M(\Pr)$. Ce $R$-schéma est simplement $\M(\Pr)$ si $\Pr$ est représentable ; si $\Pr$ n'est pas représentable, il fait office de meilleur remplaçant. Le point qui nous intéresse est que ce schéma, même s'il ne représente pas $\Pr$, a une interprétation en termes modulaires lorsque l'on se restreint à considérer les points définis sur un corps:

\begin{prop}
Soit $k$ un corps et $\bar{k}$ une clôture algébrique de $k$. On a l'identification
$$ M(\Pr)(k) = 
\left.
\begin{cases}
\ \text{Courbes\ elliptiques}\ \E/k \ \text{munies\ d'une\ structure} \\
\ \text{de\ niveau}\ \Pr,\ \text{à}\ \bar{k}\text{-isomorphisme\ près}.
\end{cases}
\right\}$$
\end{prop}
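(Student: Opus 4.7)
Pour établir cette proposition, je proposerais la stratégie suivante, analogue à celle du lemme précédent mais adaptée au cas où $\Pr$ n'est pas nécessairement rigide.

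\medskip

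\textbf{Construction de l'application.} La propriété universelle du schéma de modules grossier fournit une transformation naturelle $\eta \de \Pr \vers h_{M(\Pr)}$ : à tout couple $(\E/k,\alpha)$ on associe un $k$-point de $M(\Pr)$. Deux couples $\bar{k}$-isomorphes donnent le même $\bar{k}$-point par fonctorialité, donc le même $k$-point puisque $M(\Pr)$ est séparé. L'application quotientée par la relation de $\bar{k}$-isomorphisme est donc bien définie.

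\medskip

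\textbf{Injectivité.} Si $(\E_1,\alpha_1)$ et $(\E_2,\alpha_2)$ donnent le même $x\in M(\Pr)(k)$, ils donnent le même $\bar{k}$-point après changement de base. Une propriété caractéristique du schéma de modules grossier est que, sur un corps algébriquement clos, $\eta(\bar{k})$ passe au quotient par isomorphisme en une bijection avec $M(\Pr)(\bar{k})$. Cela se vérifie par rigidification : en caractéristique différente de $2$, tout $\bar{k}$-point se relève à $\M(\Pr,\Qr_2)$ par surjectivité étale, et deux relèvements diffèrent d'un élément de $G$ agissant uniquement sur la composante $\Qr_2$, donnant ainsi des couples sous-jacents isomorphes. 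On conclut que $(\E_1,\alpha_1)$ et $(\E_2,\alpha_2)$ sont isomorphes sur $\bar{k}$.

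\medskip

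\textbf{Surjectivité.} C'est le point principal. Soit $x\in M(\Pr)(k)$ ; supposons $\mathrm{char}\,k\neq 2$ (le cas contraire se traite symétriquement avec $\Qr_3$). Le problème $\Pr\times\Qr_2$ est rigide sur $\Z[1/2]$ (la rigidité de $\Qr_2$ suffit à trivialiser tous les automorphismes), donc est représenté par un schéma affine $\M(\Pr,\Qr_2)$ muni d'une action de $G$, et on dispose d'un morphisme surjectif $\M(\Pr,\Qr_2)\vers M(\Pr)$. On relève $x$ en un $k'$-point $(\E',\alpha',\beta')$ de $\M(\Pr,\Qr_2)$ pour une extension galoisienne finie $k'/k$ de groupe $\Delta$. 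Pour chaque $\sigma\in\Delta$, les triples $\sigma^*(\E',\alpha',\beta')$ et $(\E',\alpha',\beta')$ ont la même image dans $M(\Pr)(k')$, car celle-ci descend à $k$ ; il existe donc un unique $g_\sigma\in G$ et un unique isomorphisme canonique de triples $\varphi_\sigma\de (\E',\alpha',g_\sigma\beta')\overset{\sim}{\vers} \sigma^*(\E',\alpha',\beta')$, la rigidité de $\Pr\times\Qr_2$ assurant à la fois l'unicité et la condition de cocycle. En oubliant la composante $\Qr_2$, les $\varphi_\sigma$ fournissent une donnée de descente galoisienne sur $(\E',\alpha')$, et la descente fidèlement plate produit alors un couple $(\E/k,\alpha)$ d'image $x$ dans $M(\Pr)(k)$.

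\medskip

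L'obstacle principal est la dernière étape : il faut vérifier finement les propriétés du morphisme $\M(\Pr,\Qr_2)\vers M(\Pr)$ (en particulier que le relèvement initial est possible sur une extension séparable), et s'assurer que la condition de cocycle pour la donnée de descente se déduit bien de la rigidité du problème auxiliaire. La formulation « à $\bar{k}$-isomorphisme près » dans l'énoncé reflète d'ailleurs cette construction : des tordues galoisiennes d'un même couple sont non $k$-isomorphes mais donnent pourtant le même point du schéma de modules grossier, et cette perte d'information est intrinsèque à l'usage d'un schéma non nécessairement fin.
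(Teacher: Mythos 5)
Le mémoire ne démontre pas cette proposition : elle y est simplement citée comme une propriété générale des schémas de modules grossiers (théorie renvoyée à Katz--Mazur), il n'y a donc pas de preuve interne au document à laquelle comparer la vôtre ; je juge donc votre tentative sur le fond. Votre construction de l'application et votre argument d'injectivité sont essentiellement corrects, à condition d'invoquer explicitement la propriété définissante de l'espace grossier sur un corps algébriquement clos ; notez seulement que le morphisme $\M(\Pr,\Qr_2)\vers \M(\Pr,\Qr_2)/G$ n'est plus étale lorsque $\Pr$ n'est pas rigide (l'action de $G$ n'est alors plus libre) : c'est sa surjectivité et sa finitude qui permettent de relever les points géométriques, pas son étalitude.

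La lacune sérieuse est dans la surjectivité. D'abord, $\Pr\times\Qr_2$ n'est pas rigide : $[-1]$ agit trivialement sur $E[2]$, donc une structure de niveau 2 ne tue jamais l'automorphisme $-1$ ; c'est précisément la raison pour laquelle $Y_0(N)$ n'est qu'un espace grossier, et adjoindre $\Qr_2$ n'y change rien. Ensuite, même en remplaçant $\Qr_2$ par $\Qr_3$ (qui, lui, rigidifie bien le problème simultané), l'argument de descente s'effondre à l'étape clé : l'élément $g_\sigma$ n'est pas unique dès que la paire $(\E',\alpha')$ admet des automorphismes non triviaux, car composer $\varphi_\sigma$ par un tel automorphisme change $g_\sigma$. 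La rigidité du problème auxiliaire donne l'unicité de $\varphi_\sigma$ à $g_\sigma$ fixé, mais choisir les couples $(g_\sigma,\varphi_\sigma)$ revient exactement à choisir des isomorphismes de paires $(\E',\alpha')\vers\sigma^*(\E',\alpha')$, et la condition de cocycle pour ceux-ci n'est pas automatique : son défaut est un 2-cocycle à valeurs dans $\mathrm{Aut}(\E',\alpha')$, c'est l'obstruction classique \og corps de modules contre corps de définition\fg. La surjectivité n'est donc pas un énoncé formel valable pour tout espace grossier ; elle requiert un argument propre aux courbes elliptiques (par exemple : réaliser le $j$-invariant par une courbe définie sur $k$, puis montrer que la structure de niveau peut être choisie stable sous Galois en utilisant le fait que $\pm 1$ préserve tout sous-groupe cyclique, avec un traitement séparé des cas $j=0$ et $j=1728$, ou un argument de torsion quadratique). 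Votre dernier paragraphe situe la difficulté dans la séparabilité du relèvement ; la difficulté principale est en réalité ce défaut de rigidité et la condition de cocycle qui ne s'obtient pas gratuitement.
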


Par exemple, on a vu que le $j$-invariant classifie les courbes elliptiques sur un corps $k$ à $\bar{k}$-isomorphisme près. Un $j$-invariant peut prendre toutes les valeurs de $k$, et on peut en fait montrer que le schéma de modules grossier associé au problème modulaire \og courbe elliptique à isomorphisme près\fg est bien la droite affine $\A^1$.

On peut noter $Y_0(N)$ le schéma sur $\Z[1/N]$ qui vérifie la propriété ci-dessus. Une fois que l'on connaît l'existence de ce schéma, il n'y a plus vraiment de difficulté pour démontrer le théorème d'Igusa tel qu'énoncé plus haut. 

\v

L'objectif de cette partie est donc atteint: on dispose d'une équation, le polynôme modulaire,: dont les solutions sont les $j$-invariants de courbes liées par des isogénies cycliques de degré donné. En pratique, d'autres équations de courbes obtenues sur $\C$ restent valides sur $\F_p$: on en verra des exemples dans la suite de ce mémoire. Ces outils permettent d'effectuer les calculs nécessaires au protocole de Couveignes--Rostovtsev--Stolbunov, que l'on étudie maintenant.

\newpage

\section{Cryptosystème et algorithmes}

\subsection{\'Echanges de clés}

Deux objectifs fondamentaux en cryptographie sont d'une part assurer une communication \emph{privée} entre deux parties, et d'autre part assurer l'\emph{authentification} d'une entité à une autre. Ces conditions peuvent être obtenues à l'aide de la \emph{cryptographie symétrique}, où les deux entités utilisent une clé secrète partagée afin de crypter les données. Cependant, dans de nombreuses applications, une telle clé secrète n'est souvent pas connue au préalable. Il faut alors faire appel à la \emph{cryptographie à clefs publiques}, ou cryptographie asymétrique.

Dans ce document, on s'intéresse à des protocoles d'\emph{échanges de clés}, dont le but est de parvenir à la formation d'un secret commun à deux parties pouvant communiquer via un canal ouvert qu'un adversaire peut observer. Il est donc impossible d'envoyer ce secret \og en clair\fg. Une fois l'échange de clés effectué, les protagonistes utilisent ce secret commun pour en dériver une clé (à l'aide d'une fonction de hachage, par exemple) puis s'en servir au sein d'un protocole de cryptographie symétrique de leur choix afin d'échanger des données. Bien que cette seconde partie soit importante en pratique, on ne discutera dans ce mémoire que d'échange de clés proprement dits.

Un exemple de protocole d'échange de clés est le protocole Diffie--Hellman, proposé en 1976. Dans un groupe $G$, typiquement le groupe multiplicatif d'un corps fini, on dispose d'un élément $g$ d'ordre $N$, tous ces éléments étant publics. Alice et Bob souhaitent partager un secret $K$. Pour cela, 
\begin{itemize}
\item[•]Alice choisit $a\in (\Z/N\Z)^\times$, calcule $c_1 = g^a$ et l'envoie à Bob.
\item[•]Bob choisit $b\in (\Z/N\Z)^\times$, calcule $c_2 = g^b$ et l'envoie à Alice.
\item[•]Recevant $c_1$, Bob calcule $K = c_1^b.$
\item[•]Recevant $c_2$, Alice calcule $K = c_2^a.$
\end{itemize}
Les deux protagonistes connaissent alors $K = g^{ab}$, mais un adversaire n'a vu que $g,\, g^a$ et $g^b.$ Lorsqu'un adversaire \og passif\fg\ observe cet échange et souhaite trouver $K$, il résout donc le problème suivant.

\v
\noindent \textbf{Problème computationnel de Diffie--Hellman (CDH).} \'Etant donné $(g, g^a, g^b)$, calculer $g^{ab}$.
\v

Lorsqu'il souhaite deviner si une clé devinée est la bonne, il résout un analogue \og décisionnel\fg\ de ce problème:

\v
\noindent \textbf{Problème décisionnel de Diffie--Hellman (DDH).} \'Etant donné $(g, g^a, g^b, g^c)$, décider si $g^c = g^{ab}$.

\v
Ainsi, sans vouloir rentrer dans les détails, la \emph{sécurité} du protocole de Diffie--Hellman repose sur le postulat que ces problèmes sont difficiles (lorsque $N$ est premier, notamment) lorsque $a$ et $b$ sont choisis uniformément au hasard dans $(\Z/N\Z)^\times$.

\v

Dans la situation ci-dessus, on remarque que le groupe abélien $G = (\Z/N\Z)^\times$ agit sur l'ensemble $X = \{g^k, k\in (\Z/N\Z)^\times\}$ de façon simplement transitive, $X$ étant muni d'un point fixé $g$. On peut alors reformuler le protocole de Diffie et Hellman de manière plus générale, suivant une idée de Couveignes \cite{Couv}. Soit $G$ un groupe abélien agissant de façon simplement transitive sur un ensemble $X$ muni d'un point fixé $x_0$. Le protocole s'écrit de la façon suivante:
\begin{itemize}
\item[•]Alice choisit $a\in G$, calcule $c_1 = a\cdot x_0$ et l'envoie à Bob.
\item[•]Bob choisit $b\in G$, calcule $c_2 = b\cdot x_0$ et l'envoie à Alice.
\item[•]Recevant $c_1$, Bob calcule $K = b\cdot c_1.$
\item[•]Recevant $c_2$, Alice calcule $K = a \cdot c_2.$
\end{itemize}
Alice et Bob ont calculé $K = (ab)\cdot x_0 = (ba) \cdot x_0$, puisque le groupe $G$ est supposé abélien. Comme précédemment, la sécurité de ce protocole repose sur la difficulté du problème suivant: 

\v
\noindent\textbf{Problème 1.} \'Etant donné $x_0, a\cdot x_0, b\cdot x_0\in X$, calculer $ab\cdot x_0$.
\v

À son tour, celui-ci repose sur le problème d'\og inversion\fg\ de l'action de groupe:

\v
\noindent\textbf{Problème 2.} \'Etant donné $x_0, x_1 \in X$, calculer $a\in G$ tel que $a\cdot x_0 = x_1$.
\v

Couveignes appelle \emph{espace homogène difficile} la donné de $G, X, x_0$ tels que
\begin{itemize}
\item[•]L'action est simplement transitive;
\item[•]L'action est facile à calculer;
\item[•]Le problème 2 ci-dessus est \emph{difficile}.
\end{itemize} 
Ici, on dit qu'un problème est \emph{difficile} s'il n'existe pas d'algorithme polynomial en $n$ donnant une solution avec une probabilité non négligeable pour une entrée de taille $n$.

Couveignes propose d'utiliser l'action d'un groupe de classes sur un ensemble de courbes elliptiques sur un corps fini, décrite au début de ce document. Il s'agit du cryptosystème décrit dans l'article de Rostovtsev et Stolbunov \cite{RoSt}. L'objectif du reste de ce mémoire est d'étudier ce protocole, les différents algorithmes utilisés et son applicabilité en pratique, et d'en rechercher quelques améliorations.

\subsection{Représentation des objets mathématiques}

On fixe un corps fini $k$ de cardinal $p$, un nombre premier, et un entier $t\leq 2\sqrt{p}$. Ce choix d'un corps premier n'est pas motivé par des raisons théoriques, mais uniquement pratiques: c'est le corps dans lequel les opérations sont le plus efficaces, à taille comparable. On note $K$ le corps quadratique
$$K = \Q[\pi] / (\pi^2 - t\pi + q),$$
on se donne une courbe elliptique $E_0/k$ dont la trace du Frobenius est égale à $t$, et on note $\O$ l'ordre de $K$ qui est son anneau d'endomorphismes. On détaille maintentant la façon dont sont représentés divers objets mathématiques \og en machine\fg.

\v

\emph{Courbes elliptiques.} Une courbe elliptique est d'abord une courbe algébrique plane, et on peut donc la représenter par une de ses équations. On utilise deux formes de représentations de courbes elliptiques : la forme de Weierstrass réduite
$$\E\de y^2 = x^3 + Ax + B$$
et la forme dite de Montgomery
$$\E\de B y^2 = x^3 + A x^2 + x.$$
Notons que toute courbe elliptique sur $k$ n'admet pas nécessairement de représentation sous forme de Montgomery : en effet, une telle courbe admet en particulier un point rationnel de 2-torsion qui est $(0,0)$, et ce n'est pas le cas de toutes les courbes.
On s'autorise également à représenter une courbe elliptique à isomorphisme près par son $j$-invariant, qui est un élément de $k$. Ce faisant, on perd de l'information par rapport aux équations ci-dessus, car on ne peut plus distinguer une courbe de sa tordue.

Sur un corps fini, une courbe a en général une unique tordue, qui lui est isomorphe sur une extension quadratique de $k$ mais pas sur $k$ lui-même. Tordre une courbe a pour effet de changer sa trace de signe. Dans le modèle de Montgomery, la tordue a une forme agréable : il s'agit de la courbe
$$E'\de \varepsilon B y^2 = x^3 + Ax^2 + x$$
où $\varepsilon$ est un non-résidu quadratique dans $k$. Ajouter deux points sur une courbe elliptique sous forme de Montgomery est également plus efficace que sur une courbe sous forme de Weierstrass, ce qui est l'intérêt principal de ces courbes.
\v

\emph{Sous-groupes finis d'une courbe elliptique et isogénies.} Si $S$ est un sous-schéma en groupe fini (et plat) de $E$ d'ordre $\ell$, où $p\neq \ell$ et $\ell$ est impair, on représente le sous-groupe $S$ de la façon suivante. $S$ est étale donc déterminé par ses $\bar{k}$-points, et est stable par l'automorphisme $[-1]$ de $E$.
En écrivant une équation pour $E$, il existe donc un polynôme $K_S\in k[X]$ de degré $\frac{\ell-1}{2}$ dont les racines sont exactement les coordonnées $x$ des points de $S$. On représente alors $S$ par ce polynôme. Remarquons que cette représentation n'est univoque que si l'on se restreint aux sous-groupes étales.

Comme on l'a vu précédemment, une isogénie séparable $\phi$ est déterminée par sa courbe de départ et son noyau. Si elle est de degré impair, on peut donc la représenter par un polynôme comme ci-dessus, que l'on appelle \emph{polynôme de noyau}, noté $K_\phi$. On représentera donc uniquement des isogénies séparables de cette manière.

Dans le cas de sous-groupes d'ordre pair et d'isogénies de degré pair, on peut adopter la même stratégie et définir un polynôme en $x$ qui s'annule sur les points du sous-groupe. L'inconvénient en est que les zéros de ce polynôme $K(x)$ n'auront pas partout la même multiplicité, ce qui complique les formules. On considèrera uniquement des isogénies de degré impair dans la suite de ce mémoire. 
\v

\emph{Idéaux.} On dit qu'un nombre premier $\ell$ est \emph{Elkies} si le polynôme $X^2 - tX + q$ est scindé à racines simples modulo $\ell$, c'est à dire si son discriminant est un carré non nul modulo $\ell$. Si $\ell$ est un nombre premier d'Elkies, alors $\ell$ se scinde dans $\O$ sous la forme $(\ell) =~\frak l \bar{\frak l},$
et ces deux idéaux sont exactement les idéaux de norme $\ell$ dans $\O$. Pour les distinguer, on peut utiliser la remarque suivante. Si $E$ est une courbe elliptique ayant les bons paramètres, on sait que $E[\ell](\bar{k})$ est un $\O/\ell\O$-module de rang 1. D'autre part, on a un isomorphisme canonique par le théorème chinois :
$$\O/\ell\O \simeq \O/\frak l \O \times \O/\bar{\frak l} \O.$$
L'endomorphisme de Frobenius $\pi$, vu comme endomorphisme du $\F_\ell$-espace vectoriel $E[\ell](\bar{k})$ de dimension 2, admet donc deux valeurs propres (nécessairement distinctes car $\ell$ est d'Elkies) qui sont exactement les éléments $\pi\mod \frak l$ et $\pi\mod \bar{\frak l}$ : on les appelle \emph{valeurs propres du Frobenius} modulo $\ell$. On représentera alors un idéal de norme $\ell$ par le couple $(\ell, v)$ où $v$ est la valeur propre du Frobenius associée. D'un point de vue pratique, les valeurs propres du Frobenius sont les racines dans $\F_\ell$ du polynôme $X^2 - tX + q$ ; on reviendra plus tard sur le calcul de $t$.

Une isogénie de degré $\ell$ définie sur $k$ partant de $E$, à isomorphisme près, est simplement un sous-groupe cyclique de $E(\bar{k})$ de cardinal $\ell$ stable par Galois (i.e. par le Frobenius). On vient de voir que l'action du Frobenius sur $E[\ell](\bar{k})$ est diagonale, avec des valeurs propres distinctes. On en déduit que les seules isogénies rationnelles de degré $\ell$ partant de $E$ sont celles qui proviennent de l'action du groupe de classes.
\v

\emph{Idéaux fractionnaires et groupe de classes.} On utilisera toujours des idéaux fractionnaires écrits sous la forme $\prod {\frak l}_i^{r_i}$, où les ${\frak l}_i$ sont des idéaux dont la norme est un petit nombre premier d'Elkies. Cela implique, en particulier, que l'on ne travaille qu'avec un sous-groupe du groupe des idéaux fractionnaires.

On choisit de donner un élément de $\Cl(\O)$ par un de ses représentants, que l'on écrit comme ci-dessus. Notons que l'on pourrait toujours atteindre le groupe de classes en entier même en se restreignant à un sous-groupe des idéaux fractionnaires. On décomposera donc toujours l'action d'un élément du groupe de classes en actions successives d'idéaux \og simples\fg, c'est à dire de la forme $(\ell, v)$ où $\ell$ est un petit nombre premier d'Elkies.

On peut donner à cette décomposition un aspect plus visuel à l'aide du concept de \emph{graphe d'isogénies}. Les sommets de ce graphe sont des courbes elliptiques sur $k$ à isomorphisme près (donc des $j$-invariants), et l'on relie ces sommets par une arête labellée par un premier $\ell$ s'il existe une isogénie de degré $\ell$ les reliant. Comme on l'a dit, l'action du groupe de classes par isogénies est simplement transitive, et on obtient donc simplement un graphe de Cayley pour le groupe de classes relativement aux \og générateurs\fg\ $(\ell, v)$ pour de petits $\ell$. Cela explique sa régularité:

\begin{center}
\begin{tikzpicture}[line cap=round,line join=round, x=3cm,y=3cm]

\draw[]
 (1, 0) node(1) {2}
 (0.62, 0.78) node(2) {162}
 (-0.22,0.97) node(3) {36}
 (-0.9,0.43) node(4) {117}
 (-0.9,-0.43) node(5) {134}
 (-0.22,-0.97) node(6) {116}
 (0.62,-0.78) node (7) {167};
\draw[]
 (1) edge[thick, blue] (2)
 (2) edge[thick, blue] (3)
 (3) edge[thick, blue] (4)
 (4) edge[thick, blue] (5)
 (5) edge[thick, blue] (6)
 (6) edge[thick, blue] (7)
 (7) edge[thick, blue] (1);
\draw[]
 (1) edge[thick, red] (3)
 (3) edge[thick, red] (5)
 (5) edge[thick, red] (7)
 (7) edge[thick, red] (2)
 (2) edge[thick, red] (4)
 (4) edge[thick, red] (6)
 (6) edge[thick, red] (1);

\end{tikzpicture}
\end{center}

On montre ici un graphe d'isogénies sur $\F_{173}$; les sommets sont des $j$-invariants, les arêtes bleues représentent des isogénies de degré 3, et les arêtes rouges des isogénies de degré 7. La décomposition précédente revient simplement à découper un parcours dans ce graphe en plusieurs pas. Le graphe en couverture est un graphe d'isogénies sur $\F_{503}$, avec le degré 3 en bleu, 11 en rouge et 13 en vert. Dans ce graphe, pour un degré donné, les deux valeurs propres du Frobenius \og décrivent\fg\ les deux sens de parcours possibles.

\v 

Ce découpage reflète la manière dont l'action du groupe de classes sera calculée. En effet, on ne connaît pas de moyen efficace de calculer directement la courbe image d'une isogénie de grand degré. Afin de calculer l'action d'\emph{un} élément du groupe de classes sur \emph{une} courbe, on effectuera donc toute une série de pas dans ce graphe d'isogénies. Pour fixer les idées, l'algorithme~\ref{alg:act} décrit la manière de calculer une action.

\v

\begin{algorithm}[H]
	\label{alg:act}
		\caption{\textsc{Action}: calcul de l'action d'un élément de $\Cl(\O)$ sur une courbe}
		
    \KwIn{$E$ courbe, $g\in \Cl(\O)$ représenté sous la forme $\prod_i {\frak l}_i ^{k_i}$, où ${\frak l}_i$ est l'idéal correspondant au couple $(\ell_i, v_i)$}
    
    \KwOut{$E'$ telle que $E \to E'$
    est une isogénie correspondant à l'action de $g$}
    \For{$i$}{
        \lFor{\(0 \le j < k_i\)}{%
            $E \gets $\textsc{Pas}$(E, \ell_i, v_i)$
        }
    }
    \Return{\(E\)}
\end{algorithm}

\v
Afin de compléter la description de l'échange de clés de Rostovtsev et Stolbunov, on précise que la \og génération aléatoire\fg\ d'un élément dans le groupe de classes se fait directement sous la forme d'un produit d'idéaux: il ne s'agit donc pas de la distribution uniforme. Plus précisément, on fixe un ensemble $L$ de petits premiers d'Elkies, et une borne $M_\ell$ pour tout $\ell\in L$. L'algorithme est alors le suivant:
\begin{itemize}
\item[•] Pour tout $\ell\in L$, choisir un entier $- M_\ell\leq n_\ell\leq M_\ell$ uniformément;
\item[•] Retourner l'élément $\prod_{\ell\in L} {\frak l}^{n_\ell}$ du groupe de classes, où $\frak l$ est un idéal de $\O$ de norme $\ell$ choisi arbitrairement (autrement dit, une valeur propre du Frobenius choisie arbitrairement).
\end{itemize}
On en déduit minoration du nombre de pas à effectuer au total: pour assurer que l'\og espace de clés\fg soit de taille $N$, il faut au moins que $\prod_{\ell\in L} (2M_\ell + 1)\geq N$. On y reviendra à la section 4.

\v

Dans le reste de cette section, on donne plusieurs manières d'effectuer un \textsc{Pas} dans ce graphe d'isogénies. Sauf précision contraire, on discute du coût des algorithmes en termes d'opérations dans $\F_p$.

\subsection{Calcul à l'aide d'un polynôme de division}

Si $\phi\de E\vers E'$ est une isogénie séparable de degré $\ell$, alors son noyau est constitué de points de $E$ d'ordre $\ell$. Le polynôme $K_\phi$ est donc un facteur de degré $\frac{\ell-1}{2}$ à coefficients dans $k$ du $\ell$-ième \emph{polynôme de division} de la courbe $E$, qui est simplement le polynôme de noyau de l'isogénie $[\ell]$ de degré $\ell^2$. Avec cette idée, on obtient l'algorithme suivant.

\v

\begin{algorithm}[H]
\caption{un pas dans le graphe d'isogénies à l'aide des polynômes de division}
\label{alg:div}
\KwIn{Une courbe elliptique $E\in \Ell_k(\O)$, et un idéal représenté par $(\ell, v)$}
\KwOut{Une courbe elliptique $E'$ telle que l'isogénie $E\vers E'$ corresponde à cet idéal}
$\psi_\ell \gets \textsc{PolynômeDeDivision}(E, \ell)$
\label{alg:div:poldiv}
\;
$F \gets \textsc{Factorisation}(\psi_\ell)$
\label{alg:div:fact}
\;
$K \gets \textsc{SousGroupe}(F, \frac{\ell-1}{2}, v)$
\label{alg:div:rec}
\;
\lElse{\Return{$\textsc{Quotient}(E, K)$}}

\end{algorithm}

\v

Seul le but de l'étape~\ref{alg:div:rec} nécessite peut-être une explication : il consiste à retrouver à partir des facteurs irréductibles de $\psi_\ell$ le polynôme de degré $\frac{\ell-1}{2}$ définissant le sous-groupe cycliques de $E$ qui est le sous-espace propre du Frobenius sur $E[\ell](\bar{k})$ associé à la valeur propre $v$. Cette étape est nécessaire car ces polynômes ne sont pas nécessairement irréductibles, et $\psi_\ell$ peut avoir beaucoup d'autres facteurs qui ne déterminent pas des sous-groupes mais seulement des sous-ensembles de $E[\ell](\bar{k})$.

Examinons maintenant chacune des étapes de l'algorithme~\ref{alg:div}, en commençant par le calcul des polynômes de division.

\begin{lem}[Polynômes de division]
Soit $E$ une courbe elliptique sous forme de Weierstrass réduite
$$y^2 = x^3 + Ax + B.$$
Il existe des polynômes dits \emph{de division} $\Psi_n \in \Z[x,y]/(y^2 - x^3 - Ax - B)$, $n\geq 1$ tels que pour tout $n\geq 1$, l'endomorphisme $[n]_E$ soit donné sur l'ouvert affine par les applications rationnelles
$$(x,y) \longmapsto \left(\frac{x\Psi_n^2 - \Psi_{n-1}\Psi_{n+1}}{\Psi_n^2},
\frac{\Psi_{n+2}\Psi_{n-1}^2 - \Psi_{n-2} \Psi_{n+1}^2}{4y \Psi_n^3}\right),$$
en posant $\Psi_0 = 0,\ \Psi_{-1} = -1,$ et tels que l'on ait :
$$\begin{aligned}
 \Psi_1&= 1,\\
 \Psi_2&= 2y,\\
 \Psi_3&= 3x^4 + 6Ax^2 + 12Bx - A^2, \\
 \forall n\geq 2,\ 2y\Psi_{2n} &= \Psi_n(\Psi_{n+2}\Psi_{n-1}^2 - \Psi_{n-2} \Psi_{n+1}^2), \\
\forall n\geq 2 ,\ \Psi_{2n+1} &= \Psi_{n+2}\Psi_n^3 - \Psi_{n+1}^3\Psi_{n-1}.
\end{aligned}$$
\end{lem}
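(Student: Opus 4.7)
Le plan est le suivant. On définit les polynômes $\Psi_n$ par les valeurs initiales et les deux relations de récurrence annoncées, puis on démontre par récurrence forte sur $n$ que les formules donnant $[n]_E$ sont correctes.

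Première étape : bonne définition. Il faut d'abord s'assurer que les $\Psi_n$ sont bien des éléments de $\Z[x,y]/(y^2 - x^3 - Ax - B)$. La relation $2y\Psi_{2n} = \Psi_n(\Psi_{n+2}\Psi_{n-1}^2 - \Psi_{n-2}\Psi_{n+1}^2)$ n'a de sens que si le membre de droite est effectivement divisible par $2y$ dans cet anneau quotient. On établit donc en parallèle, par récurrence, la propriété de parité suivante : $\Psi_n \in \Z[x]$ pour $n$ impair, et $\Psi_n = 2y\,\widetilde{\Psi}_n$ avec $\widetilde{\Psi}_n \in \Z[x]$ pour $n$ pair. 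Les cas initiaux $n = 0, 1, 2, 3$ se vérifient à la main, et la parité se propage naturellement dans les deux récurrences.

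Deuxième étape : formule pour $[n]_E$. Les cas $n = 1, 2, 3$ correspondent à l'identité, à la formule classique de doublement et à sa composition avec une addition, qui se vérifient directement à partir de l'équation de Weierstrass. Pour l'étape inductive, on utilise selon la parité de $n$ l'une des deux décompositions $[2m+1]P = [m+1]P + [m]P$ ou $[2m]P = [m+1]P + [m-1]P$ ; cette dernière est justifiée par l'identité $[m+1] + [m-1] = [2m]$ dans $\End(E)$. Dans chaque cas, on applique la formule d'addition classique
$$x_{P+Q} = \lambda^2 - x_P - x_Q, \qquad \lambda = \frac{y_Q - y_P}{x_Q - x_P},$$
en y injectant les coordonnées de $[m-1]P$, $[m]P$, $[m+1]P$ données par l'hypothèse de récurrence, puis l'on simplifie l'expression obtenue pour la mettre sous la forme annoncée.

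C'est cette étape finale de simplification qui concentre la difficulté algébrique. Le résultat brut de l'addition contient au dénominateur une puissance de $(x_{m+1} - x_{m-1})$, et sa réduction à $\Psi_n^2$ équivaut à établir une identité non triviale entre les $\Psi_k$, de la forme
$$\Psi_{m+n}\Psi_{m-n} = \Psi_{m+1}\Psi_{m-1}\Psi_n^2 - \Psi_{n+1}\Psi_{n-1}\Psi_m^2,$$
variante de l'identité de Brioschi (ou identité fondamentale des suites de divisibilité elliptiques). Celle-ci se démontre elle-même par récurrence et constitue le c\oe ur calculatoire de la preuve. En la spécialisant à $n = 1$ ou $n = 2$, on retrouve exactement les deux récurrences définissant les $\Psi_n$, ce qui referme la boucle inductive. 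La formule pour la coordonnée $y$ se déduit enfin par un calcul analogue à partir de la seconde moitié de la formule d'addition, et l'induction aboutit.
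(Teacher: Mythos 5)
Le mémoire ne démontre pas ce lemme : il se contente d'indiquer que les récurrences s'obtiennent à partir des formules d'addition et vérifie ensuite, par une récurrence facile, la parité et le degré des $\Psi_n$. Votre plan développe donc exactement la voie que le texte suggère, et son architecture est la bonne : définition des $\Psi_n$ par les récurrences, propriété de parité pour donner un sens à la division par $2y$ dans l'anneau intègre $\Z[x,y]/(y^2-x^3-Ax-B)$, puis récurrence forte via $[2m+1]=[m+1]+[m]$ et $[2m]=[m+1]+[m-1]$, la réduction des formules d'addition se ramenant bien à l'identité $\Psi_{m+n}\Psi_{m-n}=\Psi_{m+1}\Psi_{m-1}\Psi_n^2-\Psi_{n+1}\Psi_{n-1}\Psi_m^2$.

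Deux points restent cependant à corriger ou à combler. D'abord, la spécialisation annoncée est inexacte : comme $\Psi_0=0$ et $\Psi_1=1$, poser $n=1$ dans cette identité donne une tautologie ; ce sont les couples d'indices de différence $1$ ou $2$, c'est-à-dire l'évaluation en $(m+1,m)$ et en $(m+1,m-1)$ (en utilisant $\Psi_2=2y$), qui redonnent les deux récurrences de l'énoncé. Ensuite, et c'est le point essentiel, affirmer que cette identité \og se démontre elle-même par récurrence\fg\ laisse à l'état d'assertion le c\oe ur calculatoire de la preuve : établir qu'une suite définie par les seules récurrences de doublement vérifie l'identité à deux paramètres est un résultat de type théorème de Ward, réellement délicat, et la démonstration usuelle passe plutôt par la construction analytique sur $\C$, $\psi_n(z)=\sigma(nz)/\sigma(z)^{n^2}$ où $\sigma$ est la fonction de Weierstrass, pour laquelle l'identité est classique, avant de transférer au cas algébrique général — ce qui serait d'ailleurs cohérent avec la démarche du mémoire, qui utilise le cas complexe comme moteur des résultats généraux. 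Enfin, un détail à expliciter : la formule d'addition avec $\lambda=(y_Q-y_P)/(x_Q-x_P)$ n'est valable que sur l'ouvert dense où $[m+1]P\neq\pm[m]P$ ; il faut conclure par coïncidence de deux morphismes sur un ouvert dense, ce qui utilise aussi le fait que $\Psi_m$ n'est pas identiquement nul.
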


Lorsque $n$ est impair, on montre aisément par récurrence que $\Psi_n$ peut être vu comme un polynôme en $x$ uniquement et de degré $\frac{n^2-1}{2}$. Si de plus $p\nmid n$, on sait que $\#E[n](\bar{k})=n^2$, donc $\Psi_n$ est un polynôme séparable dont les racines sont exactement les coordonnées $x$ des points de $n$-torsion de la courbe. 
On utilise ces relations de récurrence, obtenues à partir des formules d'additions de points sur la courbe, pour écrire l'algorithme~{\sc PolynômeDeDivision} dans l'étape~\ref{alg:div:poldiv}.
\v

Pour l'étape \ref{alg:div:fact}, {\sc Factorisation}, on utilise l'algorithme de Cantor--Zassenhaus \cite{vzGG}. La première étape est de séparer les facteurs irréductibles d'un polynôme $P$ par degré, ce que l'on fait en prenant le pcgd avec les polynômes $X^p - X$, puis $X^{p^2} - X$, etc. que l'on calcule directement modulo $P$ par mises au carré répétées. Ensuite, on tente de factoriser le reste obtenu $R$ en prenant un polynôme au hasard $a$ et en calculant $\mathrm{pgcd}(a^{\frac{p^r - 1}{2}} + 1, R)$. L'idée de l'algorithme est que si $R$ est un produit de facteurs irréductibles de degré $r$, alors $k[X]/R$ est un produit de corps de cardinal $p^r$, et le calcul de cette puissance donne aléatoirement 1 ou -1 dans chacun de ces corps.
\v

Dans l'étape \ref{alg:div:rec}, il s'agit de retrouver le polynôme à coefficients dans $k$ définissant un certain sous-groupe cyclique d'ordre $\ell$. Pour cette opération, on peut utiliser l'astuce suivante. Comme on connaît la valeur propre du Frobenius $v$ sur le sous-groupe recherché, on peut calculer l'action du Frobenius sur chacun des facteurs irréductibles de $\psi_\ell$ et rassembler les morceaux. Pour un facteur irréductible $Q$, on calcule $[v](X, Y)$ pour le point tautologique $(X, Y)$ sur l'anneau $k[X, Y]/(Q,\ Y^2 - X^3 - AX - B).$ Si ce point est égal à $(X^p, Y^p)$, on sait que l'on a trouvé un facteur du polynôme de noyau $K$.
\v
%

Enfin, on souhaite calculer une équation de la courbe image connaissant $E$ et le noyau $K$ de l'isogénie. C'est là qu'intervient la preuve \og directe\fg\ de l'existence du quotient évoquée plus tôt dans ce document.

\begin{lem}[Formules de Vélu]
Soit $E$ une courbe elliptique donnée par une équation sous forme de Weierstrass
$$y^2 + a_1 x y + a_3 y = x^3 + a_2 x^2 + a_4 x + a_6,$$
et soit $K = \sum_{i=0}^n (-1)^{n-i} \sigma_i X^i$ un polynôme définissant un sous-groupe de $E$ cyclique d'ordre $2n+1$. Notons $b_2, b_4, b_6, b_8$ les $b$-invariants de l'équation de $E$, et 
$$ t = 6(\sigma_1^2 - 2 \sigma_2) + b_2 \sigma_1 + n b_4,$$
$$ w = 10 (\sigma_1^3 - 3 \sigma_1 \sigma_2 + 3 \sigma_3) + 2  b_2 (\sigma_1^2 - 2 \sigma_2) + 3 b_4 \sigma_1 + n b_6.$$
Alors, en notant $E'$ la courbe donnée par les invariants
$$ a_1' = a_1,\ a_2' = a_2,\ a_3' = a_3,\ a_4' = a_4 - 5 t,\ a_6' = a_6 - b_2 t - 7 w,$$
il existe une isogénie séparable $E\vers E'$ de noyau $K$.
\end{lem}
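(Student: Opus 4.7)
Mon plan est de suivre la construction classique de Vélu (1971) : produire directement les équations de la courbe quotient en exhibant des fonctions rationnelles $S$-invariantes sur $E$, où $S$ désigne le sous-groupe défini par $K$. Comme $|S| = 2n+1$ est impair, $S$ ne contient pas de point de 2-torsion, et on peut partitionner $S \setminus \{\O_E\}$ en paires $\{Q, -Q\}$ ; cette observation est essentielle car elle autorisera des simplifications par parité. L'idée centrale est que le corps de fonctions de la courbe quotient $E/S$ s'identifie au sous-corps des $S$-invariants de $k(E)$, et qu'il est engendré par des \emph{sommes de traces corrigées} des fonctions coordonnées.

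Première étape : construire les fonctions invariantes
$$X = x + \sum_{Q \in S \setminus \{\O_E\}} \bigl(x \circ \tau_Q - x(Q)\bigr), \qquad Y = y + \sum_{Q \in S \setminus \{\O_E\}} \bigl(y \circ \tau_Q - y(Q)\bigr),$$
où $\tau_Q$ est la translation par $Q$ sur $E$. Les termes correctifs $x(Q), y(Q)$ assurent que les sommes sont bien définies comme fonctions rationnelles sur $E$ et non des formes formelles ; les pôles de $X$ et $Y$ sont précisément localisés aux points de $S$. L'invariance sous translation par $R \in S$ est immédiate car l'application $Q \mapsto Q + R$ permute $S \setminus \{\O_E\}$, modulo une re-normalisation des constantes qui s'annule par télescopage.

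Deuxième étape : vérifier que $(X,Y)$ satisfait une équation de Weierstrass. On montre que les fonctions $Y^2 + a_1 XY + a_3 Y$ et $X^3 + a_2 X^2 + a_4' X + a_6'$ (avec les mêmes $a_1, a_2, a_3$ par invariance via $[-1]$ et par parité de $S$) ont mêmes pôles et même partie polaire à l'infini de $E$, donc coïncident dès que l'on identifie correctement $a_4'$ et $a_6'$. C'est ici que le calcul explicite intervient : en exploitant les formules d'addition sur $E$, les contributions $x \circ \tau_Q + x \circ \tau_{-Q} - 2x(Q)$ se simplifient en une fraction rationnelle en $x$ et $x(Q)$, puis en sommant sur une famille de représentants des paires $\{Q,-Q\}$ on obtient des polynômes symétriques en les $x(Q)$, qui sont exactement les racines de $K$.

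Le principal obstacle sera le calcul des coefficients de $a_4'$ et $a_6'$ en termes des fonctions symétriques élémentaires $\sigma_i$ apparaissant dans $K$. Les identités de Newton permettent de convertir les sommes de puissances $\sum x(Q)^k$ en expressions polynomiales en les $\sigma_i$ ; on utilise ensuite l'équation $y^2 + a_1 xy + a_3 y = x^3 + a_2 x^2 + a_4 x + a_6$ pour faire apparaître les $b$-invariants $b_2, b_4, b_6, b_8$ dans les combinaisons obtenues. Un décompte soigneux des développements asymptotiques à l'infini (ordre des pôles de $X$ et $Y$) fournit les facteurs numériques $5$, $b_2$ et $7$ intervenant dans les formules finales pour $a_4'$ et $a_6'$, ainsi que la correction d'ordre $n$ provenant du nombre de paires dans $S$. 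Par construction, la flèche $\phi \de E \to E'$ définie par $(x,y) \mapsto (X, Y)$ est non constante, séparable (elle fait intervenir les coordonnées sans élévation à la puissance $p$), et son noyau contient $S$ ; l'égalité des degrés $\deg\phi = |S| = 2n+1$, lisible sur les pôles de $X$, conclut.
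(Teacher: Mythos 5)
Votre proposition reprend exactement la construction de Vélu esquissée dans le mémoire : les mêmes fonctions $X = x_G$ et $Y = y_G$ obtenues par sommes de translatés corrigées, l'invariance par $S$, puis la vérification que le couple $(X,Y)$ satisfait une équation de Weierstrass de mêmes $a_1,a_2,a_3$ et que l'application résultante est séparable de degré $2n+1$. Votre plan ajoute des détails bienvenus (identités de Newton pour exprimer les sommes de puissances en les $\sigma_i$, lecture du degré sur les pôles de $X$), mais c'est bien le même chemin que le texte.
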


Les $b$-invariants sont des quantités qui apparaissent lors de la réduction de $E$ en forme de Weierstrass courte; ce sont des expressions polynomiales simples en les $a_i$. L'idée de la preuve est la suivante.
Soit $G$ le sous-groupe étale de $E$ défini par $K$. On définit deux fonctions rationnelles sur $E$:
$$\begin{aligned}
x_G(P) = x(P) + \sum_{Q\in G\backslash \{0\}} x(P+Q) - x(Q),\\
y_G(P) = y(P) + \sum_{Q\in G\backslash \{0\}} y(P+Q) - y(Q).
\end{aligned}$$
Ces fonctions rationnelles sont bien définies et invariantes par $G$ : elles définissent donc des fonctions sur la courbe $E/G$. On montre alors qu'elles satisfont l'égalité donnée par l'équation de Weierstrass ci-dessus, et qu'elles engendrent le corps des fonctions de la courbe quotient.
Une discussion plus détaillée sur ces formules se trouve dans \cite{Kohel}, notamment leur extension aux isogénies cycliques de degré pair.

\v
On dispose depuis récemment de meilleures formules dans le cas de la forme de Montgomery, données dans \cite{VeluMontgomery}: si $P$ est un point d'ordre $2n+1$ sur une courbe de Montgomery d'équation $By^2 = x^3 + Ax^2 + x$, alors il existe une isogénie normalisée dont le noyau est le sous-groupe engendré par $P$ vers la courbe d'équation
$$B'y^2 = x^3 + A'x^2 + x$$
avec $A' = (6\sigma + A)\pi^2,\ B' = B\pi^2$, où
$$
\pi = \prod_{i = 1}^n x_{iP}, \quad
\sigma = \sum_{i = 1}^n \left(\frac{1}{x_{iP}} - x_{iP}\right).
$$
Comme ci-dessus, on peut réexprimer $\pi$ et $\sigma$ en termes des coefficients du polynôme $K$.

Sans utiliser ces formules, une autre méthode est disponible : étant donné une courbe sous forme de Montgomery, on peut calculer une équation de Weierstrass de cette courbe, et utiliser les formules de Vélu pour cette courbe. En se souvenant du point de 2-torsion, on peut ensuite retrouver une équation de Montgomery pour la courbe quotient sans extraction de racine carrée.

\v
D'un point de vue pratique, factoriser entièrement un polynôme de degré $\frac{\ell^2 - 1}{2}$ sur $k$ lorsque $p$ est grand et $\ell$ n'est pas très petit, est une opération trop coûteuse. L'étape de reconstruction du sous-groupe n'est pas non plus gratuite.
Cela pousse à préférer une autre méthode à l'aide d'une équation modulaire, ce que l'on décrit maintenant.

\subsection{Calcul à l'aide d'une équation modulaire}

On a vu que la courbe modulaire $Y_0(\ell)_k$ paramétrise les isogénies cycliques de degré $\ell$ définies sur $k$. Le polynôme modulaire de degré $\ell$ relie les $j$-invariants de ces courbes ; on peut montrer de plus que les points singuliers sur $k$ de l'application
$$Y_0(\ell) \vers \A^1 \times \A^1,$$
dont l'image est définie par le polynôme modulaire, sont nécessairement des réductions modulo $p$ de points singuliers complexes, donc peu nombreux (et dont l'ordre a un petit discriminant). En utilisant ces outils, on obtient l'algorithme suivant pour un calcul d'isogénie, qui est valide lorsque le discriminant de la courbe est supérieur à $\ell^2 + 1$, selon la borne grossière obtenue précédemment.

\v

\begin{algorithm}[H]
\caption{un pas dans le graphe d'isogénies en utilisant une équation modulaire}
\label{alg:mod}
\KwIn{Une courbe elliptique $E\in \Ell_k(\O)$, et un idéal représenté par $(\ell, v)$}
\KwOut{Une courbe elliptique $E'$ telle que l'isogénie $E\vers E'$ corresponde à cet idéal}
$\Phi_\ell(X, Y) \gets \textsc{PolynômeModulaire}(\ell)$
\label{alg:mod:polmod}
\;
$(j_1, j_2) \gets \textsc{Racines}(\Phi_\ell(j(E), Y), k)$
\label{alg:mod:roots}
\;
$K_1 \gets \textsc{Noyau}(E, \ell, j_1)$
\label{alg:mod:ker}
\;
\lIf{$\textsc{ValeurPropre}(K_1,v)$}{\Return{$\textsc{Courbe}(j_1)$}}
\label{alg:mod:check}
\lElse{\Return{$\textsc{Courbe}(j_2)$}}

\end{algorithm}

\v

De la même façon, on étudie les étapes de l'algorithme l'une après l'autre. On a vu une manière de calculer le polynôme modulaire, mais le plus rapide reste d'utiliser une base de données préfabriquée. Pour le calcul des racines, on utilise à nouveau l'algorithme de Cantor--Zassenhaus, mais le polynôme à factoriser est cette fois de degré seulement $\ell + 1$. Enfin, l'algorithme {\sc ValeurPropre} est analogue à la reconstruction du sous-groupe dans l'algorithme 1, mais le calcul du Frobenius est effectué modulo un unique polynôme de degré $\frac{\ell-1}{2}$. 

\v
Examinons maintenant le calcul du noyau. Soit $\phi\de E\vers E'$ une isogénie. On a une application $k$-linéaire \og pullback\fg, des formes différentielles sans pôle de $E'$ vers celles de $E$:
$$\phi^*\de H^0(E', \Omega^1)\vers H^0(E, \Omega^1).$$
Rappelons que $E$ et $E'$ sont de genre 1, donc ces $k$-espaces vectoriels sont de dimension 1. On sait que cette application est nulle si et seulement si $\phi$ est non séparable. Si l'on fixe une équation de Weierstrass pour $E$ et $E'$, on dispose de 1-formes privilégiées et donc d'un isomorphisme (non canonique)
$$\Hom(H^0(E', \Omega^1), H^0(E, \Omega^1))\simeq k.$$
On dit alors que $\phi$ est \emph{normalisée} si $\phi^* = 1$ dans cette identification. Lorsque $\phi$ est normalisée, la condition de pullback donne une équation différentielle satisfaite par $\phi$ qui est donnée par la proposition qui suit, et qui est à la base de l'algorithme de calcul de noyau. Cette idée est due à Elkies et Stark.

\begin{prop}
Soit $\phi\de E\vers E'$ une isogénie normalisée de degré $\ell$ entre courbes sous forme de Weierstrass telles que $a_1 = a_3 = a_1' = a_3' = 0$ (ainsi les équations sont de la forme $y^2 = f(x)$). Notons $\phi_x(x,y),\ \phi_y(x,y)$ les applications rationnelles définissant $\phi$ sur l'ouvert affine ; alors $\phi_x$ est une fonction paire, donc peut s'écrire sous forme irréductible
$$\phi_x(x, y) = \frac{N(x)}{D(x)}$$
où $N$ et $D$ sont des polynômes de degré au plus $\ell$. Notons
$$G(X) = a_6 X^3 + a_4 X^2 + a_2 X + 1,\ H(X) =a_6' X^3 + a_4' X^2 + a_2' X + 1.$$
Alors la fraction rationnelle 
$$T(X) = \frac{D(1/X)}{N(1/X)}$$
est solution de l'équation différentielle
$$\frac{T}{X} H(T) - G(X) T'^2 = 0.$$
\end{prop}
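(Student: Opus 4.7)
On exploite les deux informations algébriques disponibles sur $\phi$ : la compatibilité aux différentielles invariantes (condition de normalisation) et le fait que l'image est sur $E'$. La démarche se décompose en une mise en forme, une traduction de la normalisation, puis un changement de variables. Pour la mise en forme, $\phi$ commute avec l'involution $[-1]\de(x,y)\mapsto (x,-y)$, donc $\phi_x$ est paire en $y$ et s'écrit $\phi_x = N(x)/D(x)$ avec $N, D\in k[X]$ premiers entre eux. En outre $\phi_x\de E\vers \P^1$ se factorise à la fois par $\phi\de E\vers E'$ suivi de la projection $x\de E'\vers\P^1$ (degré total $2\ell$), et par $x\de E\vers\P^1$ suivi de $\bar{\phi_x}\de \P^1\vers\P^1$ (degré total $2\max(\deg N,\deg D)$) ; on en déduit $\max(\deg N,\deg D)=\ell$.

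L'ingrédient crucial est la traduction de la normalisation. Comme $a_1 = a_3 = a_1' = a_3' = 0$, les différentielles invariantes s'écrivent $\omega_E=dx/(2y)$ et $\omega_{E'}=dx'/(2y')$. L'égalité $\phi^*\omega_{E'}=\omega_E$ donne
$$\frac{\phi_x'(x)\,dx}{2\phi_y} = \frac{dx}{2y},$$
soit $\phi_y = y\cdot \phi_x'(x)$ (dérivée prise par rapport à $x$, ce qui a bien un sens puisque $\phi_x\in k(x)$). En élevant au carré et en injectant dans l'équation $\phi_y^2=\phi_x^3 + a_2'\phi_x^2 + a_4'\phi_x + a_6'$ de $E'$, puis en remplaçant $y^2$ par $x^3 + a_2 x^2 + a_4 x + a_6$, on aboutit à l'équation différentielle algébrique
$$(x^3 + a_2 x^2 + a_4 x + a_6)\bigl(\phi_x'(x)\bigr)^2 = \phi_x^3 + a_2'\phi_x^2 + a_4'\phi_x + a_6'.$$

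Il reste à effectuer le changement de variables $X = 1/x$, $T = 1/\phi_x$, qui donne bien $T(X) = D(1/X)/N(1/X)$. La règle de dérivation en chaîne fournit $\phi_x'(x) = X^2 T'(X)/T(X)^2$ avec $T' = dT/dX$, tandis que les définitions de $G$ et $H$ assurent $x^3 + a_2 x^2 + a_4 x + a_6 = G(X)/X^3$ et $\phi_x^3 + a_2'\phi_x^2 + a_4'\phi_x + a_6' = H(T)/T^3$. En substituant, l'équation précédente devient
$$\frac{G(X)}{X^3}\cdot \frac{X^4\,T'^2}{T^4} = \frac{H(T)}{T^3},$$
qui se simplifie immédiatement en $X\,G(X)\,T'^2 = T\,H(T)$, puis en l'équation annoncée après division par $X$.

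La difficulté conceptuelle réside dans l'utilisation de la normalisation : elle permet d'exprimer $\phi_y$ comme une dérivée de $\phi_x$ et donc d'éliminer cette seconde coordonnée au profit d'une unique inconnue rationnelle. Le reste n'est qu'un calcul formel soigneux dont le changement de variables $X = 1/x$, $T = 1/\phi_x$ est précisément choisi pour faire apparaître les polynômes réciproques $G$ et $H$ des membres de droite des équations de Weierstrass.
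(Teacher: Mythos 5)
Votre démonstration est correcte et suit essentiellement la même démarche que celle du mémoire : la normalisation donne $\phi_y = y\,\phi_x'(x)$, on injecte dans l'équation de $E'$, on remplace $y^2$ par l'équation de $E$, puis le changement de variables $X = 1/x$, $T = 1/\phi_x$ fait apparaître $G$ et $H$ et donne l'équation annoncée. La seule différence est cosmétique (vous justifiez en prime la parité de $\phi_x$ et la borne sur les degrés, que l'énoncé tenait pour acquises, et vous effectuez la substitution en une seule étape finale au lieu de travailler directement avec $T(1/x)$ comme le fait le texte).
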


\begin{proof}
Comme $\phi$ est normalisée, on sait que $\phi_y (x, y) = y \phi_x'(x).$ En écrivant 
$$\phi_x(x) = \frac{N(x)}{D(x)} = \frac{1}{T(1/x)},$$
on a donc
$$\phi_y(x, y) = \frac{y}{x^2} \frac{T'(1/x)}{T(1/x)^2}$$
et
$$ \left(\frac{y}{x^2} T'(1/x)\right)^2 = \left(\frac{1}{T^3(1/x)} + \frac{a_2'}{T^2(1/x)} + \frac{a_4'}{T(1/x)} + a_6'\right)T(1/x)^4$$
ce qui donne, vu l'équation de $E$ :
$$ \frac{1}{x} G(1/x) T'^2(1/x) =T(1/x) H(T(1/x))$$
et on obtient l'équation annoncée après un changement de variable.
\end{proof}
\v

Remarquons que dans cette description, lorsque $\ell$ est impair, le polynôme $D$ est précisément le carré du polynôme de noyau $K_\phi$. Précisons maintenant comment se déroule le calcul du noyau.

\v

\begin{algorithm}[H]
\caption{{\sc Noyau} : Calcul du noyau de l'isogénie}
\label{alg:ker}
\KwIn{Une courbe elliptique $E\in \Ell_k(\O)$, un nombre premier $\ell\neq p$, un élément $j'\in k$ qui est le $j$-invariant d'une courbe reliée à $E$ par une isogénie cyclique de degré $\ell$}
\KwOut{Le noyau de cette isogénie}
$E' \gets \textsc{EquationNormalisée}(E, \ell, j')$
\label{alg:ker:eq}
\;
$T \gets \textsc{SolutionEquaDiff}(E, E')$
\label{alg:ker:newt}
\;
$D \gets \textsc{Dénominateur}(1/T(1/X))$
\label{alg:ker:bm}
\;
$K\gets \textsc{RacineCarrée}(D)$
\label{alg:ker:sqrt}
\;
\Return{$K$}

\end{algorithm}

\v

A nouveau, prenons ces étapes dans l'ordre, à commencer par le calcul d'une équation normalisée.

\begin{lem}
Avec les notations de l'algorithme, soit $\Phi_ \ell(X, Y)$ le polynôme modulaire de degré $\ell$. On suppose que $E$ est donnée sous forme réduite
$$y^2 = x^3 + A x + B.$$
On définit
$$\lambda = \frac{-18}{\ell}\cdot\frac{B}{A}\cdot\frac{\frac{\partial \Phi_\ell}{\partial X} (j(E), j')}{\frac{\partial \Phi_\ell}{\partial Y} (j(E), j')} \cdot j(E) $$
Alors, si $E'$ désigne la courbe d'équation
$$y^2 = x^3 + A' x + B' $$
avec
$$A' = \frac{-\lambda^2}{48 \ell^4 j' (j' - 1728)},\ B' = \frac{-\lambda^3}{864 \ell^6 j'^2(j'-1728)},$$
on a $j(E') = j'$ et il existe une isogénie normalisée $E\vers E'$ de degré $\ell$.
\end{lem}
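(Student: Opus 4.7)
La démonstration se scinde en deux volets: vérifier que $j(E') = j'$ d'une part, et établir l'existence d'une isogénie normalisée de degré $\ell$ de $E$ vers $E'$ d'autre part.

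La première assertion résulte d'un calcul direct. En substituant les expressions de $A'$ et $B'$ dans la formule $j(E') = 1728 \cdot 4 A'^3/(4A'^3 + 27 B'^2)$, les puissances de $\lambda$ se simplifient (numérateur et dénominateur sont tous deux proportionnels à $\lambda^6$), et un petit calcul sur les constantes --- exploitant en particulier l'identité $864^2/48^3 = 27/4$ --- permet d'obtenir exactement $j(E') = j'$.

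Pour la seconde assertion, on se ramène au cas complexe via les théorèmes de relèvement de Deuring, puis on raisonne en termes de réseaux. On paramétrise $E = \C/(\Z + \Z\tau)$ et on considère la $\ell$-isogénie normalisée canonique $\phi \de E\vers \C/\Lambda'$, où $\Lambda' = \frac{1}{\ell}(\Z + \Z\ell\tau)$, de sorte que $\phi^*dz = dz$. Les coefficients de Weierstrass courts de $E$ s'écrivent $A = -E_4(\tau)/48$ et $B = -E_6(\tau)/864$, et ceux de $\C/\Lambda'$ font intervenir $E_4(\ell\tau)$ et $E_6(\ell\tau)$ avec des facteurs $\ell^4, \ell^6$ induits par le scaling du réseau. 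L'ingrédient technique central est la dérivation par rapport à $\tau$ de la relation $\Phi_\ell(j(\tau), j(\ell\tau)) = 0$, qui fournit
$$\frac{\partial \Phi_\ell}{\partial X}(j(\tau), j(\ell\tau)) \cdot \frac{dj}{d\tau}(\tau) + \ell\, \frac{\partial \Phi_\ell}{\partial Y}(j(\tau), j(\ell\tau)) \cdot \frac{dj}{d\tau}(\ell\tau) = 0,$$
combinée à l'identité de Ramanujan $\frac{1}{2i\pi}\frac{dj}{d\tau} = -j \cdot E_6/E_4$. On en déduit le rapport $(\partial \Phi_\ell/\partial X)/(\partial \Phi_\ell/\partial Y)$ en termes des séries d'Eisenstein évaluées en $\tau$ et en $\ell\tau$. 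En conjuguant cela avec les identités auxiliaires $B/A = E_6(\tau)/(18 E_4(\tau))$ et $j = 1728\,E_4^3/(E_4^3 - E_6^2)$, un court calcul algébrique exprime $\lambda$ comme un multiple explicite de $j' \cdot E_6(\ell\tau)/E_4(\ell\tau)$, et l'on retrouve après simplification les expressions de $A'$ et $B'$ annoncées.

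L'obstacle principal est le suivi précis des facteurs de normalisation: puissances de $2i\pi$, puissances de $\ell$ provenant du scaling des réseaux, et constantes $48, 864, 1728$ dans les identités entre les séries d'Eisenstein. Une fois ce cadre analytique installé et les conventions fixées, la vérification se ramène à de l'algèbre élémentaire de fractions rationnelles en $E_4$ et $E_6$, sans difficulté conceptuelle supplémentaire.
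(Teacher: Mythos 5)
Votre démonstration est correcte et suit essentiellement la même voie que le mémoire : uniformisation complexe, écriture des coefficients $A$, $B$ à l'aide des séries d'Eisenstein $E_4$, $E_6$, calcul à la Elkies--Schoof (dérivation de la relation $\Phi_\ell(j(\tau), j(\ell\tau)) = 0$ et identité $\frac{1}{2i\pi}\frac{dj}{d\tau} = -j\,E_6/E_4$), puis transfert aux corps finis par les théorèmes de Deuring --- le texte du mémoire se contente d'ailleurs de renvoyer à \cite{Schoof} pour le calcul que vous détaillez, en signalant comme vous que la réduction modulo $p$ repose sur le même fait admis qu'en première section. Votre vérification directe de $j(E') = j'$ via l'identité $864^2/48^3 = 27/4$ est un complément utile que le mémoire n'explicite pas.
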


Dans le cas complexe, on peut en effet supposer que la courbe $E$ est écrite sous la forme $E_\tau = \C/\Lambda_\tau$, et que l'isogénie de degré $\ell$ s'écrit le la manière suivante:
$$\C/(\Z\oplus \Z\tau) \vers \C/(\Z \oplus \Z\frac{\tau}{\ell}).$$
 On écrit alors des égalités entre séries formelles :
$$A(\tau) = \frac{- E_4}{48}, \quad B(\tau) = \frac{E_6}{864}$$
où $E_4$ et $E_6$ sont les séries d'Eisenstein classiques, et on peut tout écrire explicitement en fonction de $q$ comme dans \cite{Schoof}. Cela donne les formules ci-dessus.

Dans le cas d'un corps fini, on peut utiliser le théorème de Deuring, avec le même fait admis que dans la première section. On sait alors que l'isogénie définie sur $\F_p$ est la réduction de l'isogénie définie sur les complexes, ce qui justifie de réduire dans $\F_p$ les équations donnant $A'$ et $B'$. Il peut arriver que la dérivée $\frac{\partial \Phi_\ell}{\partial Y}$ s'annule modulo $p$; dans ce cas, l'algorithme de calcul du noyau échoue. Dans l'algorithme \ref{alg:mod}, on tente alors de calculer le noyau de l'autre isogénie. Il faudrait être bien malchanceux pour que le second calcul échoue également.

\v

Pour résoudre l'équation différentielle jusqu'à une précision donnée, on utilise une itération de Newton. On sait déjà en effet que la solution $T$ de l'équation n'a pas de terme constant, et cette itération permet de déterminer une solution dans $k[[X]]/(X^{2^{i+1}})$ à partir d'une solution dans $k[[X]]/(X^{2^i})$. On commence avec la solution $0$ dans $k[[X]]/(X)$. Concrètement, le calcul prend la forme suivante : avec les notations de l'algorithme~\ref{alg:ker}, on définit
$$T_0 = 0, \quad \forall i\geq 0,\ T_{i+1} = T_i + T_i' \sqrt{G} \sqrt{X} \int \frac{T_i(X)}{2\sqrt{X}}.$$
Alors pour tout $i\geq 0$, $T_i$ est une solution de l'équation différentielle modulo $X^{2^i}$.

Les algorithmes de calcul de noyau proposés à l'origine par Stark et Elkies calculent les coefficients de $T$ de proche en proche, ce qui entraînait un coût quadratique en $\ell$. L'idée d'utiliser une itération de Newton pour résoudre cette équation est due à Bostan, Morain, Salvy et Schost \cite{BMSS}. Le coût de cet algorithme devient quasi-linéaire, c'est à dire quasi-optimal, en $\ell$. Remarquons toutefois que le calcul d'une équation normalisée demande encore de manipuler le polynôme modulaire en deux variables, qui est donné par $\frac{(\ell + 1)^2}{2}$ coefficients dans $\F_p$.

\v

 Pour récupérer le dénominateur, un simple algorithme d'Euclide étendu permet de récupérer les polynômes $N$ et $D$ lorsque l'on connaît les coefficients de la série formelle $\frac{N}{D}$ jusqu'au degré $2\ell + 1$. En effet, si l'on connaît un polynôme $U$ tel que $U = \frac{N}{D} \mod X^{2\ell + 1}$, alors il existe un polynôme $P$ tel que $D U = N + P X^{2\ell+1}$, ce que l'on peut réécrire
 $$D U - P X^{2\ell + 1} = N.$$
On lance alors l'algorithme d'Euclide étendu avec $U$ et $X^{2\ell + 1}$, que l'on arrête dès que les coefficients $N$ et $D$ sont tous deux de degré inférieur à $\ell$, ce qui survient nécessairement au cours du calcul.

Enfin, pour le calcul de racine carrée, on utilise la remarque suivante : si $D = K^2$ où $K$ est un polynôme séparable, alors $K$ et $\mathrm{pgcd}(D, D')$ sont associés. En effet, on a $D' = 2 K K'$ et pgcd$(K, K')$ = 1 puisque $K$ est séparable.
\v

Le coût principal de l'algorithme~\ref{alg:mod} est partagé entre le calcul des racines et l'algorithme {\sc ValeurPropre}, qui impliquent tous deux de calculer une puissance $p$-ième modulo un polynôme de degré de l'ordre de $\ell$. C'est une amélioration importante d'un point de vue pratique par rapport au $\ell^2$ précédent. Cependant, on peut proposer encore mieux dans certains cas particuliers: utiliser cette dernière méthode dans le cadre du cryptosystème de Couveignes--Rostovtsev--Stolbunov est une contribution originale de ce mémoire.

\subsection{Calcul à l'aide de torsion rationnelle}

Pour certaines valeurs propres spéciales du Frobenius, on peut proposer une autre méthode qui évite complètement le calcul d'une puissance $p$-ième. Par exemple, si 1 est valeur propre modulo $\ell$, cela signifie qu'il existe un sous-groupe cyclique d'ordre $\ell$ constitué de points rationnels sur $k$ (et un seul, car les valeurs propres sont distinctes). On obtient alors l'algorithme \ref{alg:tors}.

\v

\begin{algorithm}[H]
\caption{un pas dans le graphe d'isogénies à l'aide de points de torsion rationnels}
\label{alg:tors}
\KwIn{Une courbe elliptique $E\in \Ell_k(\O)$, et un idéal représenté par $(\ell, v)$ tel que $v = 1$}
\KwOut{Une courbe elliptique $E'$ telle que l'isogénie $E\vers E'$ corresponde à cet idéal}
$P \gets \textsc{PointDeTorsion}(E, \ell)$
\label{alg:tors:tors}
\;
$K \gets \textsc{SousGroupeEngendré}(E, P)$
\label{alg:tors:sg}
\;
$E' \gets \textsc{Quotient}(E, K)$
\label{alg:tors:quo}
\;
\Return{$E'$}

\end{algorithm}

\v

Remarquons qu'en dehors de la recherche du point de torsion, cette méthode est très peu coûteuse. Le coût de la construction du sous-groupe est linéaire en $\ell$, mais indépendant de $p$, et le quotient final est simplement un appel aux formules de Vélu, ce qui est quasiment instantané. La recherche d'un point de torsion rationnel ne pose pas non plus beaucoup de problèmes:

\v

\begin{algorithm}[H]
\caption{{\sc PointDeTorsion} : calcul d'un point de torsion rationnel}
\label{alg:ptors}
\KwIn{Une courbe elliptique $E\in \Ell_k(\O)$ de trace $t$, et un nombre premier d'Elkies $\ell$ tel que $E$ admet un point de $\ell$-torsion rationnel primitif}
\KwOut{Un point de $\ell$-torsion primitif}
$C \gets p + 1 - t$
\label{alg:ptors:card}
\;
\Repeat{$P \neq 0$}{
$Q \gets \textsc{PointAuHasard}(E,k)$
\;
$P \gets \left[\frac{C}{\ell}\right]Q$
\label{alg:ptors:scal}
    }
\Return{$P$}

\end{algorithm}

\v

Nous reviendrons plus tard sur le calcul de la trace $t$. Cette quantité est la même pour toutes les courbes rencontrées au cours du calcul: en effet, selon un théorème de Tate, deux courbes sur $k$ reliées par une isogénie définie sur $k$ ont le même nombre de points sur $k$. La trace $t$ dépend donc uniquement de la courbe d'origine choisie, et n'est ensuite plus recalculée.

Le coût principal de la recherche d'un point de torsion est la multiplication scalaire à l'étape~\ref{alg:ptors:scal}. Pour calculer ce type de multiplications, on utilise un mécanisme de chaîne d'additions (\og double-and-add\fg\ par exemple, en utilisant l'expression du cofacteur en base binaire) qui utilise environ $\log(C)$ (c'est à dire $\log(p)$) additions sur la courbe. Cela représente un gain important par rapport aux algorithmes~\ref{alg:div} et~\ref{alg:mod}. 

\v

L'algorithme~\ref{alg:tors} n'est utilisable que pour les nombres premiers $\ell$ pour lesquels 1 est valeur propre. Intuitivement, il y a environ une chance sur $\ell - 1$ que cela survienne pour une courbe choisie \og au hasard \fg : c'est de plus en plus rare à mesure que $\ell$ augmente. On peut donner une variante lorsque les points de $\ell$-torsion ne sont pas nécessairement $k$-rationnels, mais seulement définis sur une extension de $k$ de petit degré. Il faut alors multiplier un point choisi au hasard par $\frac{C'}{\ell}$, où $C'$ est le cardinal de la courbe sur cette extension (qui peut se calculer directement à partir de $C$ par la théorie de Hasse). Lorsque l'extension est de degré $d$, $C'$ est de l'ordre de $p^d$. Associé au fait qu'il est plus coûteux de manipuler des points définis sur une extension, le coût de cet algorithme étendu devient quadratique en $d$. Dans le cas générique, on a $d = \ell - 1$, ce qui implique qu'il est plus intéressant d'utiliser l'algorithme~\ref{alg:mod}: l'algorithme~\ref{alg:tors} n'est donc intéressant que pour de petites valeurs de $d$. De plus, le fait que $\ell$ soit d'Elkies n'empêche plus les deux sous-groupes d'apparaître simultanément, ce qui ajoute une nouvelle restriction.

En pratique, il est rare qu'une courbe admette beaucoup de nombres premiers pour lesquels l'algorithme \ref{alg:tors} ou une de ses variantes peut être utilisé. Pour profiter pleinement de l'accélération permise par cette méthode, il faut donc trouver des courbes elliptiques exceptionnelles, qui ont plus de points de torsion rationnelle de petit ordre que la moyenne.

\v
D'autre part, l'algorithme \ref{alg:tors} permet de voyager dans le graphe d'isogénies uniquement dans la direction associée à la valeur propre 1, et non dans la direction de l'autre valeur propre. On peut proposer trois astuces:
\begin{itemize}
\item[•] Utiliser le modèle de Montgomery permet d'avoir des opérations plus efficaces, ce qui réduit le coût de la multiplication scalaire. On peut ainsi ajouter deux points génériques en 6 additions, 2 mises au carré et 4 multiplications dans $k$, contre 8 additions, 5 multiplications, une mise au carré et une division pour un modèle de Weierstrass \og classique\fg\ non optimisé.

\item[•] Un autre avantage du modèle de Montgomery est de pouvoir travailler uniquement sur la coordonnée $x$. Il devient alors très facile de passer d'une courbe à sa tordue. En pratique, cela revient à dire que l'on tire autant d'avantages d'une valeur propre égale à $-1$ qu'une valeur propre égale à 1.

\item[•] Enfin, une dernière amélioration est de forcer l'égalité $p = -1 \mod\ell$. De ce fait, si $v_1 = 1$ dans une direction, alors on aura $v_2 = -1$ dans l'autre, ce qui garantit une méthode efficace pour pouvoir faire un pas dans les deux sens.
\end{itemize}
\v

En utilisant une méthode générique, on ne peut pas espérer beaucoup mieux que de devoir chercher les racines d'un polynôme de degré $\ell + 1$ sur $\F_p$. En effet, il existe toujours $\ell + 1$ isogénies distinctes de degré $\ell$ au départ d'une courbe donnée qui sont définies sur $\bar{k}$: c'est le nombre de sous-groupes cycliques de taille $\ell$ dans $E(\bar{k})$. Utiliser des points de torsion rationnelle d'ordre $\ell$ permet de contourner ce calcul lorsqu'ils existent.

La recherche de courbes ayant beaucoup de tels points devient alors critique pour l'amélioration des performances. Il est également intéressant de disposer d'une courbe ayant beaucoup de nombres premiers d'Elkies, car on sera amené à manipuler des isogénies de degré moins élevé. Cependant, avant de pouvoir chercher des courbes intéressantes, différents paramètres restent à déterminer.

\newpage
\section{Recherche de paramètres}

Plusieurs paramètres restent à fixer dans le cryptosystème décrit à la section précédente. D'une part, la taille de $k$, qui conditionne la taille du graphe d'isogénies et donc celle du groupe de classes. D'autre part, il reste également à définir quels idéaux utiliser pour se déplacer dans le graphe d'isogénie, c'est à dire déterminer la liste $L$ et les bornes $M_\ell$ dans la méthode de génération aléatoire précédente pour le groupe de classes:
\begin{itemize}
\item[•] Pour tout $\ell\in L$, choisir un entier $- M_\ell\leq n_\ell\leq M_\ell$ uniformément;
\item[•] Retourner l'élément $\prod_{\ell\in L} {\frak l}^{n_\ell}$ du groupe de classes, où $\frak l$ est un idéal de $\O$ de norme $\ell$ choisi arbitrairement (autrement dit, une valeur propre du Frobenius choisie arbitrairement).
\end{itemize}

Ces paramètres, comme leurs analogues dans d'autres protocoles cryptographiques, doivent être choisis de telle sorte à assurer une bonne sécurité contre les meilleures attaques connues. Ici, on parle uniquement de sécurité contre un adversaire \emph{passif}, qui observe les communications entre les deux parties mais n'interfère pas avec le protocole: l'adversaire cherche donc à retrouver le secret à partir des éléments publics.

Déterminer ces paramètres permettra également de mesurer les véritables performances de ce protocole. En effet, cette mesure n'a de sens qu'à un niveau de sécurité donné: par exemple, un intérêt de beaucoup de protocoles utilisant des courbes elliptiques par rapport au RSA classique est de pouvoir utiliser des groupes plus petits pour atteindre le même niveau de sécurité.

\v
Pour quantifier cette sécurité, on parle de \emph{bits}. On dit qu'un protocole présente $n$ bits de sécurité si la meilleure attaque connue nécessite au moins $2^n$ opérations pour réussir avec une probabilité non négligeable. Même sans préciser ce que signifie \og opération \fg, c'est une façon agréable de quantifier le concept de sécurité. En informatique, on aime les puissances de 2, c'est pourquoi les niveaux de sécurités standard sont aujourd'hui 128, 192 ou 256 bits de sécurité (ce dernier étant tout de même un peu paranoïaque). L'objectif de cette partie est de déterminer des paramètres de notre cryptosystème et d'estimer ses performances réelles au niveau d'environ 128 bits de sécurité.

\subsection{L'attaque classique}

Ce que l'on appelle ici \emph{attaque} est donc un algorithme (probabiliste ou non) permettant de déterminer la clé privée à partir des données publiques. Rappelons les données du problème:
\begin{itemize}
\item[•] Les éléments publics sont des courbes elliptiques $E_0$, $E_a$ et $E_b$, ainsi que la liste $L$ des degrés d'isogénies utilisés et les bornes $M_\ell$ correspondantes.
\item[•] Les éléments secrets sont les éléments du groupe de classe $a, b$ tels que $E_a = a\cdot E_0$ et $E_b = b\cdot E_0$.
\end{itemize}

Une façon d'attaquer le protocole est de chercher à déterminer $a$ à partir des données publiques $E_0$ et $E_a$. Autrement dit, il s'agit de chercher une isogénie entre deux courbes données; en termes de graphe d'isogénies, il s'agit de retrouver un chemin dans un graphe reliant deux points donnés.

\v
Pour cela, on peut construire une attaque fondée sur le \og paradoxe des anniversaires\fg. Cette attaque a été proposée par Galbraith dans \cite{Galbraith}, qui est la référence principale de cette section et dont sont tirés tous les résultats cités ici. Le résultat principal en est le suivant:

\begin{thm}
Soient $E_1, E_2$ des courbes elliptiques ordinaires définies sur $\F_p$ telles que $\#E_1(\F_p) = \#E_2(\F_p)$. En admettant l'hypothèse de Riemann pour les corps quadratiques, et sous heuristiques, il existe un algorithme probabiliste construisant une isogénie rationnelle entre $E_1$ et $E_2$. Le coût de cet algorithme est dans le pire des cas $O(p^{3/2}\ln(p))$ en temps et $O(p \ln(p))$ en espace.
\end{thm}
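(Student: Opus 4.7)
Le principe de l'attaque, due à Galbraith, repose sur une marche aléatoire dans le graphe d'isogénies combinée au paradoxe des anniversaires. Comme $\# E_1(\F_p) = \# E_2(\F_p)$, les deux courbes partagent la même trace du Frobenius, donc le même polynôme caractéristique, et en particulier le même anneau $\Z[\pi]$. Quitte à restreindre la discussion à un ordre $\O$ contenant $\Z[\pi]$ (à défaut d'égalité, un argument supplémentaire rattrape les courbes d'anneaux d'endomorphismes différents par des isogénies ascendantes ou descendantes d'ordre divisant le conducteur), le théorème principal de la multiplication complexe sur $\F_p$ fournit un unique $\frak a\in\Cl(\O)$ vérifiant $\frak a\cdot E_1 = E_2$. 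Trouver une isogénie définie sur $\F_p$ revient donc à trouver ce $\frak a$.

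On utilise ensuite deux conséquences de l'hypothèse de Riemann généralisée pour les corps quadratiques imaginaires. D'une part, un théorème de Bach assure que $\Cl(\O)$ est engendré par les classes d'idéaux premiers de norme $O(\log^2 |D|)$, où $D$ est le discriminant de $\O$, qui est de l'ordre de $p$. L'idéal cherché $\frak a$ peut donc s'écrire comme un produit d'idéaux premiers \og petits \fg, et les pas élémentaires dans le graphe d'isogénies déjà analysés dans la section précédente suffisent à le reconstituer. D'autre part, la formule analytique du nombre de classes et les estimations sur $L(1,\chi)$ sous GRH conduisent à $h(\O) = O(\sqrt{|D|}\ln|D|) = O(\sqrt{p}\ln p)$.

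L'attaque proprement dite consiste à effectuer deux marches aléatoires indépendantes dans le graphe d'isogénies, l'une partant de $E_1$, l'autre de $E_2$, en ne franchissant à chaque pas que des isogénies associées à des idéaux premiers de petite norme. Tous les sommets visités sont stockés dans une structure de données (table de hachage) permettant la détection rapide de collisions. Dès qu'un même sommet $E$ apparaît dans les deux listes, les chemins $E_1 \to E$ et $E_2 \to E$ se composent en une marche explicite $E_1 \to E_2$, ce qui fournit simultanément $\frak a$ sous forme factorisée et l'isogénie désirée. Par le paradoxe des anniversaires, une collision survient après $O(\sqrt{h(\O)})$ pas dans chacune des marches ; les bornes annoncées $O(p^{3/2}\ln p)$ en temps et $O(p\ln p)$ en espace s'obtiennent en multipliant par le coût polynomial d'un pas d'isogénie et par la taille des données manipulées.

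L'obstacle principal est de nature heuristique : pour que le paradoxe des anniversaires s'applique réellement, il faut supposer que les sommets rencontrés par les marches aléatoires sont distribués suffisamment uniformément sur l'orbite considérée. Cette hypothèse, non démontrée, repose sur les propriétés conjecturées d'expandeur du graphe de Cayley de $\Cl(\O)$ par rapport aux générateurs \og petits \fg\ fournis par Bach. Un second point délicat est le contrôle de la longueur des marches par rapport au diamètre du graphe : il faut s'assurer que le produit d'idéaux premiers obtenu représente bien un élément essentiellement uniforme dans $\Cl(\O)$, ce qui relève également d'arguments heuristiques. Enfin, le bilan précis des complexités requiert un décompte soigneux du coût de chaque calcul d'isogénie élémentaire sur $\F_p$ et de l'efficacité des opérations sur la table de hachage utilisée pour la détection de collisions.
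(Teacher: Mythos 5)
Votre esquisse reprend, pour la phase de recherche, la même stratégie que le texte (deux marches ou arbres issus de $E_1$ et $E_2$, collision par paradoxe des anniversaires, GRH pour garantir des générateurs de petite norme, hypothèse heuristique d'équirépartition des produits d'idéaux) ; sur ce point elle est conforme. Le vrai problème est l'attribution des bornes de complexité. Avec $h = O(\sqrt{p}\ln p)$, la phase de collision ne demande que $O(\sqrt{h}) = O(p^{1/4}\sqrt{\ln p})$ sommets, chacun coûtant un temps polynomial en $\ln p$ (les degrés d'isogénies utilisés étant $O(\ln^2 p)$ sous GRH) : en multipliant, comme vous le proposez, le nombre de pas par le coût polynomial d'un pas et par la taille des données, on obtient un coût en temps et en espace de l'ordre de $p^{1/4}$ à des facteurs logarithmiques près, et certainement pas $O(p^{3/2}\ln p)$ ni $O(p\ln p)$. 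Les bornes de l'énoncé ne peuvent donc pas provenir de la recherche de collision.

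Elles proviennent de l'étape que vous reléguez à une parenthèse : ramener d'abord les deux courbes à un même anneau d'endomorphismes (typiquement l'ordre maximal) par des isogénies dont le degré divise le conducteur de $\Z[\pi]$. Ce conducteur peut être de l'ordre de $\sqrt{p}$ et admettre un facteur premier $\ell$ de cette taille ; le calcul d'une seule isogénie verticale de degré $\ell$ impose alors de manipuler des objets de taille $\Theta(\ell^2 \ln p) = \Theta(p\ln p)$ (par exemple le polynôme modulaire $\Phi_\ell$) et un temps de calcul de l'ordre de $p^{3/2}$ à facteurs logarithmiques près : c'est cette étape, explicitement signalée dans le texte comme la plus coûteuse dans le pire des cas, qui justifie les bornes du théorème. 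En l'excluant du décompte, votre preuve ne justifie ni la borne en temps ni la borne en espace ; il faudrait soit intégrer le coût de cette normalisation du conducteur, soit supposer dès le départ $\End(E_1)=\End(E_2)$ avec des degrés restreints, auquel cas on démontre un énoncé différent (et bien meilleur), analogue à la proposition qui suit le théorème dans le texte.
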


Le principe de cet algorithme est de construire deux arbres dans le graphe d'isogénies enracinés l'un en $E_1$, l'autre en $E_2$. Pour construire ces arbres, on calcule l'action d'un groupe de classes avec les algorithmes de la section précédente. Lorsque deux branches entrent en collision, on dispose d'un chemin entre $E_1$ et $E_2$, ce qui survient avec une bonne probabilité une fois que le nombre de branches est de l'ordre de $\sqrt{h}$, où $h$ est le nombre de classes. L'hypothèse de Riemann intervient ici pour s'assurer de pouvoir parcourir tout le groupe de classes avec des idéaux dont la norme est inférieure à une borne explicite. Cet algorithme repose de plus sur une hypothèse heuristique, à savoir que les branches de chaque arbre ne se recouvrent pas trop: autrement dit, les produits de la forme
$$\prod_{i = 1}^n {\frak l}_i^{a_i}, \quad a_i\in \{-1,0,1\}$$
se répartissent de manière raisonnablement uniforme dans le groupe de classes pour une grande proportion de suites $({\frak l}_i)$.

Avant de construire ces arbres, toutefois, il faut s'assurer de disposer de deux courbes ayant le même anneau d'endomorphismes, et ce n'est pas nécessairement le cas des deux courbes d'origine. Une solution consiste à utiliser des isogénies dont le degré divise le conducteur pour se ramener tout d'abord à des courbes dont l'anneau d'endomorphismes est maximal: les propriétés de ces isogénies \og exceptionnelles \fg\ sont décrites par exemple dans \cite{Kohel}. Cette étape est la plus coûteuse dans le pire des cas.
\v

L'attaque de Galbraith peut être simplifiée dans notre cadre. On sait déjà en effet que les deux courbes ont le même anneau d'endomorphismes, et que seuls des premiers de la liste $L$ ont été utilisés pour voyager entre elles. Cela permet d'éviter de se ramener à l'anneau d'endomorphismes maximal, et on peut aussi profiter du fait que les idéaux dont les normes apparaissent dans $L$ n'engendrent pas nécessairement la totalité du groupe de classes. On peut donc reformuler l'attaque ainsi.

\begin{prop}
Avec les notations précédentes, il existe un algorithme probabiliste qui, étant donnés $E_0$, $E_a$ et $L$, construit une chaîne d'isogénies entre $E_0$ et $E_a$ dont les degrés sont choisis dans $L$. En notant $N$ le cardinal du sous-groupe du groupe de classes engendré par les idéaux dont la norme figure dans $L$, le coût de cet algorithme est en moyenne $\Theta(\sqrt{N}\, \ln(N) \ln(p)^5)$.
\end{prop}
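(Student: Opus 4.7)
Le plan est d'adapter l'algorithme de Galbraith rappelé ci-dessus au sous-groupe $H\subset \Cl(\O)$ engendré par les idéaux de norme dans $L$, en profitant du fait que les deux courbes d'entrée ont déjà le même anneau d'endomorphismes $\O$ et sont déjà reliées par un élément de $H$ (c'est $a$). On procède en trois temps : description de l'algorithme, argument de collision, analyse du coût.

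D'abord, on effectue une marche aléatoire dans le graphe de Cayley de $H$ agissant sur l'orbite de $E_0$, que l'on relance simultanément depuis $E_0$ et depuis $E_a$. Plus précisément, on tire des éléments $g \in H$ de la forme $g = \prod_{\ell \in L} \frak l^{n_\ell}$ avec les $n_\ell$ uniformes dans un intervalle assez large pour que la distribution induite sur $H$ soit proche de l'uniforme (on peut prendre $n_\ell \in \{-1,0,1\}$ et $|L| = \Theta(\ln N)$, ou utiliser des marches plus longues suivant \cite{Galbraith}). Pour chaque tirage on calcule $g\cdot E_0$ et $g\cdot E_a$ à l'aide de l'algorithme~\ref{alg:mod}, et on stocke le résultat (indexé par le $j$-invariant) dans deux tables de hachage $T_0$ et $T_a$. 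On s'arrête dès qu'une courbe stockée dans $T_0$ apparaît aussi dans $T_a$, ce qui donne $g_0\cdot E_0 = g_a\cdot E_a$, donc $(g_a^{-1} g_0)\cdot E_0 = E_a$ ; l'élément $g_a^{-1} g_0$ appartient à $H$ puisque $H$ est un sous-groupe, et sa décomposition en facteurs premiers de $L$ est connue par construction, d'où la chaîne d'isogénies cherchée.

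Ensuite, on estime le nombre de tirages nécessaires. Sous l'heuristique que les produits $\prod \frak l^{n_\ell}$ se répartissent de manière suffisamment uniforme dans $H$ (c'est l'hypothèse heuristique mentionnée plus haut, et l'on peut invoquer l'hypothèse de Riemann pour les corps quadratiques afin de garantir que $H$ tout entier est atteint), le paradoxe des anniversaires donne une probabilité de collision non négligeable dès que le nombre total d'éléments stockés atteint $\Theta(\sqrt{N})$. On doit donc effectuer $\Theta(\sqrt{N})$ tirages de chaque côté.

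Enfin, il reste à évaluer le coût d'un tirage. Chaque $g\cdot E$ demande d'enchaîner $\Theta(\ln N)$ pas élémentaires dans le graphe (puisque $g$ est un produit de $\Theta(\ln N)$ idéaux de petite norme, choisie borné en fonction de $p$ par Riemann étendue). D'après l'analyse de l'algorithme~\ref{alg:mod}, chaque pas se ramène au calcul d'une puissance $p$-ième modulo un polynôme de degré $\Theta(\ell) = O(\ln p)$ (par la même borne), soit $\Theta(\ln p)$ multiplications de polynômes de taille $O(\ln p)$ à coefficients dans $\F_p$, chaque multiplication dans $\F_p$ coûtant $\Theta(\ln(p)^2)$ opérations binaires en arithmétique naïve ; en multipliant, un pas coûte $\Theta(\ln(p)^4)$ en opérations binaires, et l'insertion/recherche dans les tables de hachage ajoute $O(\ln p)$. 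On arrive à un coût par pas de $\Theta(\ln(p)^5)$, un coût par tirage de $\Theta(\ln(N)\ln(p)^5)$, et un coût total de $\Theta(\sqrt{N}\,\ln(N)\,\ln(p)^5)$ comme annoncé. Le point le plus délicat est la justification de l'heuristique d'équidistribution des produits d'idéaux dans $H$ ; c'est le même verrou que dans l'énoncé original de Galbraith et l'on se contente ici d'en faire une hypothèse, conformément à l'énoncé.
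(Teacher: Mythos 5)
Votre esquisse suit pour l'essentiel la même stratégie que le mémoire, dont la justification est d'ailleurs très succincte : on reprend l'attaque de Galbraith en la restreignant au sous-groupe $H\subset\Cl(\O)$ engendré par les idéaux de norme dans $L$, on exploite le fait que $E_0$ et $E_a$ ont déjà le même anneau d'endomorphismes (donc pas de remontée vers l'ordre maximal), on cherche une collision de type rencontre au milieu entre des chemins issus de $E_0$ et de $E_a$, et la facture $\Theta(\sqrt{N}\,\ln(N)\,\ln(p)^5)$ se décompose comme chez vous : $\Theta(\sqrt{N})$ branches, des chemins de longueur $\Theta(\ln N)$, et un pas par polynôme modulaire dominé par une puissance $p$-ième modulo un polynôme de degré $O(\ln p)$.

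Deux réserves toutefois. D'une part, vous réintroduisez l'hypothèse heuristique d'équirépartition des produits d'idéaux, alors que le texte insiste précisément sur le fait qu'elle n'est plus nécessaire dans ce cadre : la clé étant de la forme $\prod_{\ell\in L}\frak l^{\,n_\ell}$ avec des bornes $|n_\ell|\leq M_\ell$ publiques, les bornes $M_\ell$ dictent exactement le nombre de branches à construire pour chaque premier, et l'on peut énumérer les exposants (en scindant l'espace de clés en deux moitiés de taille environ $\sqrt{N}$), ce qui garantit la collision au lieu de la rendre seulement probable ; votre variante par marches aléatoires n'établit donc la borne de coût moyen que conditionnellement à cette heuristique, que vous reconnaissez mais que l'énoncé visait justement à éviter. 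D'autre part, votre comptage du coût d'un pas est incohérent tel qu'écrit : avec l'arithmétique naïve que vous invoquez, on a $\Theta(\ln p)$ multiplications de polynômes de degré $O(\ln p)$, chacune demandant $O(\ln(p)^2)$ multiplications dans $\F_p$, chacune coûtant $O(\ln(p)^2)$ opérations binaires, soit directement $\Theta(\ln(p)^5)$ par pas ; votre étape intermédiaire à $\Theta(\ln(p)^4)$, suivie d'un saut à $\Theta(\ln(p)^5)$ que l'ajout additif de $O(\ln p)$ pour les tables de hachage ne justifie pas, doit être corrigée, même si le chiffre final (et son interprétation en opérations binaires, qui est bien la lecture naturelle du facteur $\ln(p)^5$) est le bon.
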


Remarquons que l'hypothèse heuristique n'est plus nécessaire, car les bornes $M_\ell$ donnent précisément le nombre de branches à construire pour chaque premier. Le facteur $\ln(p)^5$ reflète le coût des calculs d'isogénies comme ils ont été décrits à la section précédente, en utilisant les polynômes modulaires.

\v
Cette attaque jouant le rôle de la meilleure connue, le choix des paramètres $p$ et $L$ doit être fait de telle sorte que la quantité $N$ de la proposition précédente soit de l'ordre de 250 bits, c'est à dire $2^{250}$.

\subsection{Choix des paramètres}

On a vu que la sécurité du cryptosystème dépend de manière critique du cardinal d'un certain sous-groupe du groupe de classes. Trois questions se posent alors:
\begin{itemize}
\item[•] Comment peut-on contrôler, en fonction de $p$, la taille du groupe de classes ? Plus précisément, connaissant $p$ et la trace $t$ du Frobenius la courbe $E_0$, peut-on contrôler le cardinal du groupe de classes des ordres de $K = \Q(\pi)/(\pi^2 - t\pi + p)$ ?
\item[•] Pour un ordre $\O$ donné, comment peut-on déterminer une borne $M$ telle que l'ensemble des idéaux inversibles de $\O$ de norme inférieure à $M$ engendre un grand sous-groupe du groupe de classes ?
\item[•] Une fois cette borne définie, peut-on donner pour chaque idéal ${\frak l}_i$ de norme $\ell_i\leq M$, une borne $M_i$ telle que les produits
$$\prod_{i} {\frak l}_i^{a_i}, \quad - M_i\leq a_i\leq M_i$$
se répartissent à peu près uniformément dans ce sous-groupe ? Plus généralement, peut-on donner un critère pour que des bornes $M_i$ soient convenables ?
\end{itemize}

Ces questions d'arithmétiques sont complexes, et souvent les seules réponses que l'on sait apporter sont heuristiques. Lorsque des théorèmes existent, il s'agit de théorie analytique des nombres, et leurs preuves sont hors de portée de ce mémoire.
\v

\textbf{Contrôle du nombre de classes.} Un premier élément est qu'au sein d'un même corps quadratique imaginaire, le nombre de classes augmente à mesure que le conducteur augmente. On peut en trouver une preuve chez Lang \cite{Lang}, th. 8.7.

\begin{prop}
Soit $\O$ un ordre de conducteur $c$ dans un corps quadratique imaginaire $K$. Notons $h_\O$ et $h_K$ le nombre de classes de $\O$ et $\O_K$ respectivement. Alors on a l'égalité
$$h_\O = h_K c\, [\O_K^*:\O^*] \prod_{p|c} \left(1 - \left(\frac{K}{p}\right)p^{-1}\right)$$
où l'on note $\left(\frac{K}{p}\right) = -1, 0, 1$ selon si $p$ est inerte, ramifié ou scindé dans $K$, respectivement.
\end{prop}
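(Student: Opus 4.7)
Le plan consiste à établir et à exploiter une suite exacte reliant les groupes d'unités et les groupes de Picard de $\O$ et $\O_K$, puis à calculer explicitement le facteur local en chaque premier divisant $c$.

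Premièrement, je rappellerais que $\O = \Z + c\O_K$ admet $c\O_K$ comme \emph{conducteur}, c'est-à-dire le plus gros idéal de $\O_K$ entièrement contenu dans $\O$. Ce fait permet deux observations clés : d'une part, la projection naturelle induit un isomorphisme $\O/c\O_K \simeq \Z/c\Z$ ; d'autre part, un idéal $\frak a$ de $\O$ est inversible si et seulement s'il est premier à $c$, et dans ce cas l'application $\frak a \mapsto \frak a \O_K$ réalise une bijection, préservant la norme, entre les idéaux inversibles de $\O$ premiers à $c$ et les idéaux de $\O_K$ premiers à $c$.

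Deuxièmement, je construirais la suite exacte fondamentale
$$1 \to \O^* \to \O_K^* \overset{f}{\vers} (\O_K/c\O_K)^*/(\Z/c\Z)^* \vers \Cl(\O) \overset{\iota}{\vers} \Cl(\O_K) \to 1$$
où $f$ est induite par la réduction modulo $c\O_K$ et $\iota$ par l'extension $\frak a \mapsto \frak a\O_K$. La surjectivité de $\iota$ découle du fait que tout idéal $\frak b$ de $\O_K$ est équivalent à un idéal premier à $c$, qui provient alors d'un idéal de $\O$ via la bijection précédente. Le point délicat est d'identifier $\Ker\iota$ à l'image du morphisme central : si $\frak a \O_K = \alpha \O_K$, alors $\frak a$ est déterminée par la classe de $\alpha$ dans $(\O_K/c\O_K)^*$, modulo l'ambiguïté des unités et des éléments de $\O$ qui engendrent déjà $\frak a$ comme $\O$-idéal. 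C'est l'étape la plus subtile, car il faut vérifier soigneusement que le morphisme connectant est bien défini et que son noyau coïncide avec l'image de $f$ ; on trouve les détails dans Lang.

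Troisièmement, en passant aux cardinaux dans la suite exacte, on obtient
$$h_\O = h_K \cdot [\O_K^*:\O^*] \cdot \frac{|(\O_K/c\O_K)^*|}{|(\Z/c\Z)^*|}.$$
Il reste donc à calculer le ratio de droite. Par le théorème chinois appliqué à la décomposition $c = \prod_{p|c} p^{v_p(c)}$, on se ramène à un calcul local en chaque premier $p | c$. Suivant le comportement de $p$ dans $K$ (inerte, scindé, ou ramifié), le quotient $\O_K/p^n \O_K$ est respectivement une extension de $\F_{p^n}$, un produit $(\Z/p^n\Z)^2$, ou un anneau $\F_p[\varepsilon]/(\varepsilon^2)$ étendu à l'ordre $n$. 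Un calcul direct donne alors les cardinaux des groupes d'unités, et le facteur local
$$\frac{|(\O_K/p^{v_p(c)}\O_K)^*|}{|(\Z/p^{v_p(c)}\Z)^*|} = p^{v_p(c)}\left(1 - \left(\frac{K}{p}\right)p^{-1}\right),$$
exactement comme dans l'énoncé ; le produit sur $p|c$ fait apparaître le facteur $c$ global et la formule annoncée. Le principal obstacle conceptuel reste l'établissement rigoureux de la suite exacte ; une fois celle-ci acquise, le reste n'est qu'un calcul local de type comptage.
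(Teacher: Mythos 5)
Le mémoire ne démontre pas cette proposition~: il renvoie simplement à Lang, th.~8.7. Votre argument est précisément la preuve standard de cette référence (suite exacte reliant les unités, le groupe $(\O_K/c\O_K)^*/(\Z/c\Z)^*$ et les deux groupes de classes, puis calcul local des facteurs en chaque $p \mid c$)~; l'approche est donc la bonne, et votre calcul local des cardinaux de groupes d'unités, cas inerte, scindé et ramifié, est correct.

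Deux réserves cependant. D'abord, votre troisième étape ne découle pas de la suite exacte que vous avez écrite~: le produit alterné des cardinaux dans $1 \to \O^* \to \O_K^* \to (\O_K/c\O_K)^*/(\Z/c\Z)^* \to \Cl(\O) \to \Cl(\O_K) \to 1$ donne $h_\O = h_K\,\frac{|(\O_K/c\O_K)^*|}{|(\Z/c\Z)^*|}\,[\O_K^*:\O^*]^{-1}$, c'est-à-dire l'indice des unités au \emph{dénominateur}~; c'est la forme classique de la formule (celle de Lang), et l'énoncé du mémoire tel qu'il est écrit comporte vraisemblablement une coquille à cet endroit --- sans conséquence sauf si $K = \Q(i)$ ou $\Q(\sqrt{-3})$, seuls cas où cet indice diffère de 1. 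Vous reproduisez la coquille au lieu de la corriger, ce qui rend votre étape de comptage incohérente avec votre propre suite exacte. Ensuite, l'équivalence \og $\frak a$ inversible si et seulement si $\frak a$ premier à $c$ \fg\ est fausse dans un sens (l'idéal principal $c\O$ est inversible sans être premier à $c$)~; ce dont la preuve a besoin est seulement que tout idéal premier à $c$ est inversible, que toute classe de $\Cl(\O)$ contient un tel idéal, et que $\frak a \mapsto \frak a\O_K$ est une bijection préservant la norme entre idéaux premiers à $c$ de part et d'autre. Ces deux points corrigés, votre esquisse coïncide avec la démonstration de la référence citée, le point délicat (la construction du morphisme connectant et l'identification de son noyau) restant, comme vous le signalez, délégué à Lang.
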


Ainsi le nombre de classes minimal est atteint pour l'anneau d'entiers: $h_\O$ est de l'ordre de $h_K c$. Cela amène à réviser la \og meilleure attaque\fg: il peut être intéressant de remonter à des courbes dont l'anneau d'endomorphismes est l'anneau d'entiers même en ayant au départ l'information que les anneaux d'endomorphismes sont les mêmes. Cela pousse à \emph{ne pas} profiter de l'augmentation du nombre de classes avec le conducteur: autrement dit, on cherche de bons paramètres dans le cas où l'ordre d'endomorphismes est $\O_K$.

D'autre part, on dispose de plusieurs résultats sur le nombre de classes d'un corps quadratique imaginaire. Tout d'abord, on peut facilement le majorer: si $K$ est un corps quadratique imaginaire de discriminant $D_K$, alors
$$h_K \leq \frac{1}{\pi} \sqrt{|D_K|}\, \ln(|D_K|).$$
On peut également dire des choses pour la minoration de ce nombre de classes. Le théorème de Brauer--Siegel donne une première indication, malheureusement non effective:
\begin{thm}[Brauer--Siegel pour les corps quadratiques imaginaires]
Soit $(K_i)_{i\in \N}$ une suite de corps quadratiques imaginaires de discriminants respectifs $D_i$ et de nombres de classes $h_i$. On suppose que
$|D_i| \underset{i\to\infty}{\vers} \infty.$
Alors
$$\frac{\log(h_i)}{\log(\sqrt{|D_i|})} \underset{i\to\infty}{\vers} 1.$$
\end{thm}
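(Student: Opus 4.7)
L'idée principale est d'utiliser la formule analytique du nombre de classes pour un corps quadratique imaginaire, qui exprime $h_K$ en termes de la valeur en $s=1$ d'une fonction $L$ de Dirichlet. Plus précisément, si $K$ est un corps quadratique imaginaire de discriminant $D_K$, on dispose de la formule
$$h_K = \frac{w_K \sqrt{|D_K|}}{2\pi}\, L(1, \chi_K),$$
où $w_K \in \{2,4,6\}$ est le nombre de racines de l'unité de $K$ et $\chi_K$ est le caractère quadratique de Kronecker associé à $D_K$. Ainsi $\log h_K - \frac{1}{2}\log|D_K| = \log L(1,\chi_K) + O(1)$, et tout le contenu du théorème revient à montrer que
$$\log L(1,\chi_{K_i}) = o\bigl(\log |D_i|\bigr) \quad \text{lorsque }|D_i|\to\infty.$$

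Pour la majoration correspondante ($\log L(1,\chi_K) = O(\log\log|D_K|)$), qui redonne d'ailleurs la borne élémentaire $h_K \leq \frac{1}{\pi}\sqrt{|D_K|}\log|D_K|$ déjà mentionnée, on procède par sommation d'Abel : on coupe la série en $N \asymp \sqrt{|D_K|}$, on majore la tête de façon triviale et on contrôle la queue à l'aide de la borne de Pólya--Vinogradov sur les sommes partielles $\bigl|\sum_{n\leq M} \chi_K(n)\bigr| \ll \sqrt{|D_K|}\log|D_K|$. C'est la partie facile.

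Pour la minoration $\log L(1,\chi_K) = o(\log|D_K|)$, la difficulté principale est d'exclure qu'une suite de caractères $\chi_{K_i}$ admette des zéros réels s'approchant arbitrairement près de $1$. C'est précisément le contenu du théorème de Siegel: pour tout $\varepsilon > 0$, il existe $c(\varepsilon) > 0$ tel que $L(1,\chi_K) \geq c(\varepsilon)\, |D_K|^{-\varepsilon}$. La preuve repose sur une alternative non effective subtile. Si aucun caractère quadratique primitif de conducteur modéré ne possède de zéro réel très proche de $1$ (un \emph{zéro de Siegel}), on obtient directement la minoration via une région classique sans zéros pour les fonctions $L$. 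Dans le cas contraire, on exploite ce zéro exceptionnel $\beta_0$ d'un caractère $\chi_0$ en étudiant au voisinage de $\beta_0$ la fonction produit $\zeta(s)L(s,\chi_K)L(s,\chi_0)L(s,\chi_K\chi_0)$, dont les coefficients de Dirichlet sont positifs; une comparaison fournit alors la minoration cherchée pour $L(1,\chi_K)$. C'est l'existence même de ce $\chi_0$ hypothétique — dont on ne sait pas s'il existe — qui rend toute la constante $c(\varepsilon)$ non effectivement calculable.

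La principale obstruction est donc ce théorème de Siegel, dont la preuve est un résultat profond d'analyse complexe et de théorie analytique des nombres, loin du cadre de ce mémoire. Une fois le théorème admis, la conclusion est immédiate : en combinant la majoration et la minoration, on obtient
$$\frac{\log h_i}{\log \sqrt{|D_i|}} = 1 + \frac{\log L(1,\chi_{K_i}) + O(1)}{\frac{1}{2}\log |D_i|} \underset{i\to\infty}{\vers} 1,$$
ce qui achève la démonstration.
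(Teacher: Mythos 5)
Votre schéma de preuve est correct, mais il faut préciser que le mémoire lui-même ne démontre pas cet énoncé : il le cite simplement, en signalant que les preuves relevant de la théorie analytique des nombres sont hors de son cadre, et que le cas quadratique imaginaire était connu avant les travaux de Brauer et Siegel. Votre réduction est la bonne : la formule analytique du nombre de classes $h_K = \frac{w_K\sqrt{|D_K|}}{2\pi}L(1,\chi_K)$ (avec $w_K\in\{2,4,6\}$ borné, donc absorbé dans le $O(1)$) ramène tout à l'encadrement $\log L(1,\chi_{K_i}) = o(\log|D_i|)$ ; la majoration par sommation d'Abel et Pólya--Vinogradov est standard et correcte, et la minoration est exactement le théorème de Siegel $L(1,\chi)\geq c(\varepsilon)|D|^{-\varepsilon}$, dont votre description (dichotomie selon l'existence d'un zéro exceptionnel, produit $\zeta(s)L(s,\chi)L(s,\chi_0)L(s,\chi\chi_0)$ à coefficients positifs, non-effectivité de $c(\varepsilon)$) est fidèle à l'argument classique. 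Le seul point à souligner est que votre texte est une preuve \emph{modulo} le théorème de Siegel, que vous admettez explicitement : c'est là que réside toute la profondeur (et la non-effectivité) du résultat, et ce choix est cohérent avec l'esprit du mémoire, qui renvoie de toute façon ces démonstrations à la littérature. Votre proposition fournit donc davantage que le texte original, sans contenir d'erreur, à condition d'assumer clairement que l'ingrédient central n'y est qu'esquissé et non démontré.
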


Ce résultat était en réalité connu avant les travaux de Brauer et Siegel, qui l'ont généralisé à des corps de nombres généraux. Stark a ensuite donné des versions effectives de ce théorème, ce qui amène à la conclusion (heuristique) suivante: si $K$ est un corps quadratique de discriminant $D_K$ et de nombre de classes $h_K$, et si $D_K$ est de l'ordre de $2^{512}$, alors $h_K$ est au moins de l'ordre de $2^{250}$; autrement dit, l'anneau d'entiers de $K$ fournit une bonne sécurité au vu de l'attaque précédente.

\v
Dans notre contexte, $K$ est obtenu comme le quotient $\Q[\pi]/(\pi^2  - t\pi + p)$. Le discriminant de l'ordre $Z[\pi]$ de $K$, engendré par l'endomorphisme de Frobenius, est $D = t^2 - 4 p$ et est donc en général de l'ordre de grandeur de $p$. Le discriminant $D_K$ de $K$ (qui est celui de $\O_K$) est alors l'entier sans facteur carré qui apparaît dans la décomposition $D = c^2 D_K,$ et $c$ est le conducteur de $\Z[\pi]$. Ainsi, déterminer le discriminant de $K$ revient à déterminer le facteur carré maximal de $D$.

Lorsque $D$ est un entier de l'ordre de 512 bits, factoriser $D$ est une opération que l'on ne peut pas faire chez soi, et qui implique l'utilisation de nombreuses machines durant plusieurs jours, avec l'utilisation d'algorithmes sophistiqués (notamment le \emph{crible algébrique}).

On a cependant tendance à penser qu'un entier choisi \og au hasard\fg\ a peu de chances de présenter un grand facteur carré; en moyenne, le conducteur de $\Z[\pi]$ sera donc, au pire, un petit entier. On peut donc espérer que ce sera le cas pour la courbe elliptique $E_0$ que nous finirons par choisir dans ce mémoire. On laisse donc ce problème de côté pour l'instant, et l'on choisira un nombre premier $p$ de 512 bits, de sorte que $D$ et donc $D_K$ soient de cet ordre également. Il faudrait vérifier que le conducteur de la courbe choisie est bien un petit entier, mais ce n'est pas fait ici.

\v
\textbf{Générateurs du groupe de classes.}
Soit $K$ un corps quadratique imaginaire de discriminant $D$, et soit $\O_K$ son anneau d'entiers. En admettant l'hypothèse de Riemann pour $K$, on peut montrer que les idéaux de $\O_K$ dont la norme est inférieure à $6\ln(|D|)^2$ engendrent le groupe de classe de $\O_K$.

Pour un nombre premier $p$ de 512 bits, utiliser cette borne amènerait à calculer des isogénies de degré près d'un million, ce qui est déraisonnable. En pratique, il est tout à fait possible que le nombre minimal de générateurs soit très inférieur à cette quantité: par exemple, un générateur suffit lorsque le groupe de classes est de cardinal premier. On considèrera donc que si $K$ un corps quadratique imaginaire, et $\O$ un ordre de $K$ de discriminant $D$ de notre ordre de grandeur, alors les idéaux inversibles de $\O$ dont la norme est inférieure à $\ln(|D|)$ engendrent $\Cl(\O)$, ou au moins un grand sous-groupe de celui-ci.

\v
Pour étudier la répartition des produits d'idéaux, on peut réfléchir à nouveau en termes de graphe d'isogénies. La répartition des produits d'idéaux dans le groupe de classes est alors directement reliée à la dispersion d'une certaine marche aléatoire dans ce graphe, et on sait que les propriétés d'\emph{expansion} de certains graphes garantissent la dispersion rapide des marches aléatoires. En admettant l'hypothèse de Riemann à nouveau, on peut montrer que le graphe d'isogénies satisfait ce type de propriété. 

Quelques définitions: pour un graphe non orienté $G$, $k$-régulier (c'est à dire que tout sommet admet $k$ voisins), on définit l'opérateur d'adjacence $A$ de la façon suivante:
$$A(f)\de x\longmapsto \sum_{y \text{ voisin de } x} f(y).$$
La fonction constante 1 est un vecteur propre de cet opérateur associé à la valeur propre $k$, dite valeur propre triviale; c'est la plus grande en valeur absolue. Le graphe $G$ est dit $\delta$-expanseur si les autres valeurs propres $\lambda$ de $A$ vérifient $|\lambda|\leq (1-\delta) k$. Lorsque $G$ est $\delta$-expanseur, les marches aléatoires dans $G$ de longueur $\frac{1}{\delta} \log(|G|)$ (où $|G|$ est le nombre de sommets) se répartissent uniformément dans le graphe, au sens où la probabilité de terminer dans un ensemble de sommets $S$ est au moins proportionnelle à $\#S$ (uniformément en $S$, bien sûr). Le résultat suivant est tiré de \cite{GRH}.

\begin{thm}
Soit $E_0/\F_p$ une courbe elliptique ayant multiplication complexe par $\O$. On se donne $B>2$. Soit $G$ le graphe dont les sommets sont les éléments de $\Ell_{\F_p}(\O)$, et dont deux sommets sont reliés par une arête s'ils sont reliés par une isogénie de degré inférieur à $(\log(4p))^B$. Alors $G$ est un graphe expanseur, dans le sens où les valeurs propres de l'opérateur d'adjacence vérifient
$$|\lambda| = O\left((\lambda_{\text{triv}} \log(\lambda_{\text{triv}}))^{\frac{1}{2} + \frac{1}{B}}\right).$$
Par conséquent, on peut donner une constante $C>0$ telle que lorsque $p$ est assez grand, une marche aléatoire de longueur au moins
$$C \frac{\log(h_\O)}{\log\log p}$$
termine dans un sous-ensemble fini $S$ de $\Ell_{\F_p}(\O)$ avec probabilité au moins $\frac{1}{2} \frac{\#S}{h_\O}$.
\end{thm}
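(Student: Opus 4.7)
Le plan est de procéder en deux étapes: établir d'abord une borne spectrale sur l'opérateur d'adjacence $A$ de $G$, puis en déduire la propriété de mélange des marches aléatoires comme corollaire standard.

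D'après le théorème principal de la multiplication complexe, l'action de $\Cl(\O)$ sur $\Ell_{\F_p}(\O)$ est simplement transitive. En fixant $E_0$ comme origine, on identifie ces deux ensembles, et $G$ devient exactement le graphe de Cayley de $\Cl(\O)$ relativement à l'ensemble symétrique $T$ des classes d'idéaux premiers inversibles de norme au plus $(\log 4p)^B$. Par le théorème des nombres premiers pour les corps de nombres, on a $\lambda_{\text{triv}} = \#T \sim (\log 4p)^B/(B\log\log p)$. Puisque $\Cl(\O)$ est abélien, $A$ se diagonalise simultanément dans la base des caractères de $\Cl(\O)$: à chaque caractère $\chi$ correspond la valeur propre
$$\lambda_\chi = \sum_{\frak l \in T} \chi(\frak l),$$
le caractère trivial redonnant bien $\lambda_{\text{triv}} = \#T$.

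L'obstacle principal est alors la majoration $|\lambda_\chi| = O((\#T \log \#T)^{1/2 + 1/B})$ pour les caractères non triviaux. Chacun de ces $\chi$ se relève en un caractère de Hecke du corps quadratique $K$, auquel est associée une fonction $L$; sous GRH pour cette fonction $L$, la formule explicite fournit une estimation de type Chebyshev pour la somme partielle $\sum_{N(\frak l) \leq X} \chi(\frak l)$, et un choix convenable de $X = (\log 4p)^B$ donne la borne annoncée. C'est précisément le contenu technique principal de \cite{GRH}, dont la démonstration relève de la théorie analytique des nombres et dépasse le cadre de ce mémoire; on l'admet ici.

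La seconde assertion se déduit alors d'un résultat classique de mélange dans les graphes expanseurs. En notant $\mu$ la seconde valeur propre en module de $A$, la borne ci-dessus entraîne $\log(\#T/\mu) \sim \tfrac{B-2}{2}\log\log p$. L'inégalité spectrale usuelle fournit la majoration
$$\|\mathrm{loi}(X_t) - \mathrm{unif}\|_{\mathrm{TV}} \leq \tfrac{1}{2}\sqrt{h_\O}\,(\mu/\#T)^t.$$
En prenant $t \geq C\log(h_\O)/\log\log p$ pour une constante $C$ dépendant de $B$ suffisamment grande, cette quantité devient inférieure à $1/(2 h_\O)$, d'où $\P(X_t \in S) \geq \#S/h_\O - 1/(2 h_\O) \geq \tfrac{1}{2}\#S/h_\O$ pour tout sous-ensemble $S$ non vide, ce qui conclut.
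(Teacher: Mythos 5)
Votre démarche est cohérente avec le traitement du mémoire, qui ne démontre pas ce théorème: il le cite directement de \cite{GRH} et se contente de remarquer que l'hypothèse de Riemann intervient parce que les vecteurs propres non triviaux de l'opérateur d'adjacence d'un graphe de Cayley abélien sont les caractères du groupe. Vous explicitez précisément cette remarque: identification de $G$ à un graphe de Cayley de $\Cl(\O)$ via la transitivité simple de l'action, diagonalisation de $A$ par les caractères, et vous admettez — comme le fait le mémoire pour l'énoncé entier — la majoration des sommes de caractères $\sum_{N(\frak l)\leq X}\chi(\frak l)$ sous GRH, qui est bien le c\oe ur analytique de \cite{GRH}. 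Votre déduction de la seconde assertion par l'inégalité spectrale standard (borne en variation totale en $\frac{1}{2}\sqrt{h_\O}\,(\mu/\#T)^t$, puis $t\geq C\log(h_\O)/\log\log p$ avec $C$ dépendant de $B$) est correcte et constitue même un complément utile que le mémoire ne détaille pas. Seule petite imprécision: les arêtes de $G$ correspondent à des isogénies de degré quelconque (pas nécessairement premier) inférieur à $(\log 4p)^B$, donc l'ensemble générateur $T$ devrait a priori comprendre tous les idéaux inversibles de norme bornée, ce qui modifie l'asymptotique de $\#T=\lambda_{\text{triv}}$ mais pas la structure de l'argument.
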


L'hypothèse de Riemann intervient ici car les autres vecteurs propres de l'opérateur d'adjacence $A$, lorsque $G$ est le graphe de Cayley d'un groupe $H$, sont des caractères de $H$.

On peut faire deux remarques sur ce résultat. Premièrement, on ne veut certainement pas calculer des isogénies de degré $(\log p)^2$, mais seulement $\log(p)$, valeur qui permet d'atteindre une taille suffisante de l'espace de clés. Lorsque l'on durcit cette contrainte sur les idéaux considérés, le résultat ci-dessus ne s'applique plus, mais il semble plausible qu'une propriété d'expansion soit toujours vérifiée. Deuxièmement, on ne considèrera pas véritablement des marches alétoires uniformes dans le graphe ci-dessus: en effet, les isogénies de degré plus élevé sont en général plus difficiles à évaluer que les isogénies de petit degré. Ainsi, on aimerait effectuer plus de pas dans le graphe en utilisant les petits degrés que les grands, pour minimiser le coût total de l'opération.

On propose donc une hypothèse optimiste, mais intuitivement plausible, sur la répartition des produits d'idéaux dans notre cadre. 

\begin{hyp}
Avec les notations précédentes, soit $L$ un ensemble de nombres premiers, et $N$ le cardinal du sous-groupe de $\Cl(\O)$ engendré par les idéaux dont la norme figure dans $L$. Pour tout $\ell\in L$, on se donne une borne $M_\ell$ et l'on suppose que
\begin{itemize}
\item[•] Pour tout $\ell\in L$, on a $M_\ell \leq \ln(N)$;
\item[•] On a $\prod_{\ell\in L} (2M_\ell + 1) \geq N$.
\end{itemize}
Alors la plupart du temps, les produits $\prod {\frak l}^{a_\ell}$ pour $-M_\ell \leq a_\ell\leq M_\ell$ se répartissent de manière environ uniforme dans le sous-groupe, et $N$ est proche de $h_\O$.
\end{hyp}

Ainsi, on choisira un nombre premier $p$ de 512 bits, une liste $L$ de longueur environ 100 et des bornes $M_\ell$ vérifiant $\prod (2 M_\ell +1) \geq 2^{256}$. Montrer la sécurité de ce genre de paramètre semble impossible, mais utiliser les bornes prouvées données ci-dessus n'est pas raisonnable; on se contente donc de l'approche heuristique. Un élément semble montrer qu'une bonne répartition est possible: il ne peut pas exister de \og petite boucle\fg\ dans le gaphe d'isogénies, autrement dit de relation non triviale trop petite entre idéaux de petite norme. En effet, on en déduirait l'existence d'un endomorphisme des courbes elliptiques de petit degré qui n'est pas une multiplication scalaire, c'est à dire un élément de $\O$ non réel de petite norme: c'est impossible car $\O$ a un grand discriminant.

\v
Après cette discussion, on peut poser
$$\begin{aligned}
p =\ &120373407382088450343833839782228011370920294512701979230713977354082515866699 \\ &38291587857560356890516069961904754171956588530344066457839297755929645858769,
\end{aligned}
$$
un nombre premier de 512 bits congru à $-1$ modulo tous les nombres premiers compris entre 2 et 380. Notons que le produit de ces premiers est déjà un nombre de 509 bits. Afin de pouvoir utiliser le plus possible la méthode rapide de calcul d'isogénie, on souhaite maintenant exhiber une courbe elliptique $E/\F_p$ admettant:
\begin{itemize}
\item[•] Un modèle de Montgomery,
\item[•] Beaucoup de nombres premiers d'Elkies,
\item[•] Beaucoup de points de torsion rationnels de petit ordre.
\end{itemize}

Malheureusement, trouver une telle courbe est un problème difficile. Par exemple, étant donné un entier $C$ compris entre les bornes de Hasse, on ne sait pas exhiber une courbe de cardinalité $C$, bien que l'on puisse démontrer son existence. On peut toutefois faire un peu mieux que tester des courbes au hasard et observer leurs propriétés, comme on va le voir maintenant.

\subsection{Recherche de courbes}

On donne tout d'abord une stratégie (très) naïve pour trouver ce genre de courbe sur $k = \F_p$:
\begin{itemize}
\item[•] Choisir $a_4$ et $a_6$ au hasard dans $k$;
\item[•] Définir la courbe $E\de y^2 = x^3 + a_4x + a_6$;
\item[•] Regarder si $E$ a un modèle de Montgomery;
\item[•] Si oui, calculer le nombre de points $C$ de $E$ sur $k$ (ou sa trace $t$, de manière équivalente);
\item[•] Pour les premiers $\ell\leq 500$, regarder si $t^2 - 4p$ est un carré non nul modulo $\ell$ et si $C = 0 \mod\ell$;
\item[•] Estimer le temps nécessaire à un parcours dans le graphe d'isogénies en fonction des résultats;
\item[•] Conserver la meilleure courbe.
\end{itemize}

Pour que $E$ admette un modèle de Montgomery, il faut que $E$ admette un point rationnel de 2-torsion, donc que le polynôme $X^3 + a_4 X + a_6$ ait une racine dans $\F_p$. La courbe $E$ admet un modèle de Montgomery si, et seulement si, il existe une racine $\alpha$ de ce polynôme telle que le polynôme dérivé $3\alpha^2 + a_4$ soit un carré dans $\F_p$.

Bien sûr, le temps nécessaire pour effectuer un parcours dépend de l'implémentation choisie, et du choix du nombre de pas $M_\ell$ pour chaque nombre premier $\ell$ utilisé. On donnera des valeurs précises du temps d'exécution à la section suivante. Notons que même en connaissant le temps de calcul pour chaque premier, il est difficile de trouver les bornes $M_\ell$ optimales: en pratique, on se donne un certain temps $T$ que l'on accepte de dépenser pour chaque premier, on calcule alors la borne $M_\ell$ associée pour chaque $\ell$, et on obtient une majoration du temps de parcours lorsque le produit des $M_\ell$ devient assez grand.

\v
Pour déterminer les propriétés de $E[\ell]$ pour de petits premiers $\ell$, cette première stratégie nécessite de calculer la cardinalité d'une courbe sur $\F_p$. Pour cela, on utilise l'algorithme de Schoof, amélioré par Atkin et Elkies \cite{Schoof}. L'idée de cet algorithme polynomial est de calculer la trace $t$ modulo $\ell_i$, pour des nombres premiers $\ell_i$ tels que $\prod \ell_i > 4\sqrt{p}$. Connaissant les bornes de Hasse $|t|\leq 2\sqrt{p}$, on peut ensuite déterminer uniquement la trace dans $\Z$ modulo le théorème chinois.

Pour déterminer $t \mod \ell_i$, l'idée de Schoof est de regarder le Frobenius agissant sur l'espace $E[\ell_i](\bar{k})$. On a vu que c'est un endomorphisme de cet $\F_{\ell_i}$-espace vectoriel de dimension 2, dont la trace est $p\mod \ell_i$ et le déterminant $p$. On peut donc calculer le $\ell_i$-ième polynôme de division $\psi_i$ de la courbe $E$, et se demander pour quel élément $t$ l'égalité
$$(X^{p^2}, Y^{p^2}) - t (X^p, Y^p) + [p] (X, Y)$$
a lieu, où $(X, Y)$ est le point canonique de $E$ sur l'anneau $k[X, Y]$ quotienté par $\psi_i$ et l'équation de $E$. Notons que les sommes ci-dessus sont des sommes de poins sur une courbe elliptique. 

Cet algorithme est certes polynomial en $\log p$, mais de degré 8. L'amélioration proposée par Elkies est essentiellement celle qui a déjà été utilisée plus tôt dans ce document: lorsque $\ell_i$ est un nombre premier d'Elkies, on peut utiliser une équation modulaire pour travailler avec des polynômes de degré $\ell_i$ plutôt que $\ell_i^2$. En effet, on peut alors calculer une isogénie comme précédemment, calculer son noyau, puis calculer la valeur propre $v_i$ du Frobenius sur celui-ci. On saura alors que $t = v_i + \frac{p}{v_i} \mod \ell_i$.

L'amélioration proposée par Atkin concerne les autres premiers. Elle ne permet pas de déterminer directement $t \mod \ell_i$, mais détermine un petit ensemble de valeurs parmi lesquels $t \mod\ell_i$ doit se trouver. On ne peut alors plus appliquer directement le théorème chinois, et il faut effectuer une recherche supplémentaire pour déterminer la valeur exacte de $t$. Cette dernière recherche n'est pas un algorithme polynomial, mais cela améliore tout de même les performances en pratique.

L'algorithme de Schoof--Atkin--Elkies (SEA) a été bien étudié, et des implémentations matures sont aujourd'hui disponibles. Nous avons utilisé PARI \cite{PARI} pour ce calcul, à travers l'interface offerte par Sage \cite{Sage}. Pour le nombre premier $p$ choisi plus haut, une à deux minutes sont nécessaires environ pour calculer le cardinal d'une courbe sur $\F_p$.

\v
Revenons à la stratégie naïve donnée plus haut. Une première remarque est que l'on pourrait éviter le test du modèle de Montgomery, en tirant dès le début une courbe elliptique sous forme de Montgomery. On peut reformuler cela en disant que l'on dispose d'une paramétrisation (évidente) qui fournit des courbes sous forme de Montgomery.

Une question naturelle est alors la suivante: peut-on donner des paramétrisations qui donnent des courbes elliptiques ayant un point rationnel de $\ell$-torsion, pour certains nombres premiers $\ell$? On a vu que les courbes elliptiques munies d'un tel point sont des points de la courbe modulaire $Y_1(\ell)$; plus généralement on peut considérer des courbes $Y_1(N)$ où $N$ est un produit de nombres premiers distincts. La question est donc de savoir si l'on peut donner une paramétrisation de $Y_1(N)$. Cela n'est possible que si la courbe $X_1(N)$ est de genre zéro. Or, selon \cite{Coreens}:

\begin{prop}
Soit $N\geq 1$ un entier. Alors le genre de $X_1(N)$ est 
$$  g(N) = 
\begin{cases}
0 &\text{si}\ 1\leq N\leq 4 \\
\displaystyle 1 + \frac{N^2}{24} \prod_{\substack{p|N \\ p\ \mathrm{premier}}} \left(1 - \frac{1}{p^2}\right) + \sum_{\substack{d|N\\ d\geq 0}} \varphi(d)\varphi\left(\frac{N}{d}\right)& \text{sinon,}
\end{cases}$$
où $\varphi$ désigne la fonction indicatrice d'Euler.
\end{prop}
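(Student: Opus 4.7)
La stratégie consiste à appliquer la formule de Riemann--Hurwitz au revêtement naturel $\phi \colon X_1(N) \to X(1)$, dont le but s'identifie à $\P^1_\C$ par le $j$-invariant. Le degré de ce revêtement est l'indice $d_N = [\overline{\Gamma(1)} : \overline{\Gamma_1(N)}]$, où les barres désignent les images dans $\mathrm{PSL}_2(\Z)$. Un décompte via la réduction modulo $N$ (et l'étude du noyau $\mathrm{SL}_2(\Z) \to \mathrm{SL}_2(\Z/N\Z)$) donnera, pour $N \geq 3$,
$$d_N = \frac{N^2}{2} \prod_{p|N}\bigl(1 - p^{-2}\bigr),$$
le facteur $1/2$ rendant compte du fait que $-I \notin \Gamma_1(N)$ pour $N \geq 3$.

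La ramification de $\phi$ est entièrement localisée au-dessus des trois points distingués de $X(1)$ : les points elliptiques $j = 0$ et $j = 1728$ (d'ordres respectifs $3$ et $2$) et la pointe $j = \infty$. En notant $\varepsilon_2, \varepsilon_3$ le nombre de points elliptiques d'ordres $2, 3$ sur $X_1(N)$ et $\varepsilon_\infty$ son nombre de pointes, la collecte des contributions locales dans Riemann--Hurwitz donnera la formule
$$g(N) = 1 + \frac{d_N}{12} - \frac{\varepsilon_2}{4} - \frac{\varepsilon_3}{3} - \frac{\varepsilon_\infty}{2}.$$

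Pour $N \geq 5$, je vérifierai ensuite que $\overline{\Gamma_1(N)}$ est sans torsion : une matrice de $\Gamma_1(N)$ d'ordre fini dans $\mathrm{PSL}_2(\Z)$ a ses valeurs propres racines de l'unité, et les congruences imposées (diagonale $\equiv 1$ et coin inférieur gauche $\equiv 0 \pmod{N}$) forcent sa trace à valoir $\pm 2$, d'où la matrice doit être $\pm I$. Dans ce régime on aura donc $\varepsilon_2 = \varepsilon_3 = 0$, et il ne restera qu'à compter les pointes.

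Les pointes de $X_1(N)$ correspondent aux orbites de $\Gamma_1(N)$ sur $\P^1(\Q)$. En représentant une pointe par une paire $(a, c) \in \Z^2$ avec $\mathrm{pgcd}(a, c) = 1$, puis en se ramenant à $(\Z/N\Z)^2$, les orbites se paramétriseront en regroupant selon $d = \mathrm{pgcd}(c, N)$ : à $d$ fixé, les classes forment essentiellement $\varphi(d) \varphi(N/d)$ possibilités, modulo l'identification $(a, c) \sim (-a, -c)$ induite par le quotient par $\pm I$. Cela aboutira à $\varepsilon_\infty = \frac{1}{2}\sum_{d|N} \varphi(d)\varphi(N/d)$, et la substitution dans la formule de genre plus haut conclura. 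L'étape la plus technique sera ce décompte final des pointes : il demande d'identifier précisément les stabilisateurs et les équivalences induites par $\Gamma_1(N)$, et c'est précisément là qu'intervient un facteur $1/2$ qu'il faut suivre avec soin pour ne pas compter double.
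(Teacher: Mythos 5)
Votre stratégie --- Riemann--Hurwitz pour le revêtement $X_1(N)\to X(1)$ de degré $d_N$, absence de torsion dans $\overline{\Gamma_1(N)}$ pour $N\geq 5$, puis décompte des pointes --- est la méthode standard et tout à fait correcte; le mémoire ne démontre pas cette proposition mais renvoie à une référence, si bien qu'il n'y a pas de preuve interne avec laquelle comparer.

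Cependant, mener votre esquisse à terme révèle une discordance avec la formule \emph{telle qu'elle est imprimée} dans l'énoncé. En substituant $\varepsilon_2 = \varepsilon_3 = 0$, $d_N = \frac{N^2}{2}\prod_{p|N}\bigl(1 - p^{-2}\bigr)$ et $\varepsilon_\infty = \frac{1}{2}\sum_{d|N}\varphi(d)\varphi(N/d)$ dans
$$g = 1 + \frac{d_N}{12} - \frac{\varepsilon_2}{4} - \frac{\varepsilon_3}{3} - \frac{\varepsilon_\infty}{2},$$
on obtient pour $N\geq 5$
$$g(N) = 1 + \frac{N^2}{24}\prod_{p|N}\left(1 - \frac{1}{p^2}\right) \;-\; \frac{1}{4}\sum_{d|N}\varphi(d)\varphi\!\left(\frac{N}{d}\right),$$
et non pas $\cdots + \sum_{d|N}\varphi(d)\varphi(N/d)$ comme le porte l'énoncé. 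La formule imprimée dans le mémoire comporte une coquille (signe et coefficient $\frac{1}{4}$ manquant), ce que l'on vérifie aussitôt numériquement: pour $N = 11$, on a $d_{11} = 60$ et $\varepsilon_\infty = 10$, donc $g(X_1(11)) = 1 + 5 - 5 = 1$, alors que la formule telle qu'imprimée donnerait $1 + 5 + 20 = 26$, ce qui est absurde. Votre preuve, menée proprement, doit donc aboutir à la formule corrigée ci-dessus; ne cherchez pas à la faire coïncider avec la version fautive de l'énoncé. Sur le plan technique, vos trois points à développer (indice via $\mathrm{SL}_2(\Z/N\Z)$, absence de torsion via les traces, décompte des pointes par les classes de $(a,c)$ modulo $N$ et $\pm 1$) sont exactement les bons, et le facteur $\frac{1}{2}$ que vous signalez dans le comptage des pointes provient bien de l'identification $(a,c)\sim(-a,-c)$.
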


Par conséquent, le genre de $X_1(N)$ est nul lorsque $N\leq 10$ ou $N = 12$. Pour ces valeurs, on peut en effet exhiber des paramétrisations de $Y_1(N)$ pour lesquelles on est capable d'expliciter le morphisme
$$j \de Y_1(N) \vers \A^1$$
afin de pouvoir retrouver la courbe elliptique correspondante. Remarquons qu'utiliser une paramétrisation de $Y_1(12)$, par exemple, est strictement supérieur à utiliser directement le modèle de Montgomery: en effet, une courbe ayant un point de 4-torsion admet toujours un modèle de Montgomery qui est facile à déterminer, et on gagne en plus la présence d'un point rationnel de 3-torsion.

Bien sûr, plus le niveau $N$ est grand, plus l'information gagnée a de la valeur, ce qui pousse à utiliser des courbes modulaires pour lesquelles aucune paramétrisation n'est disponible. Lorsque l'on connaît une équation plane $g(x, y) = 0$ de $Y_1(N)$, on peut tout de même générer des points de cette courbe modulaire sur $\F_p$ par le procédé suivant: on tire $x\in \F_p$ au hasard, puis on recherche les racines du polynôme $g(x, Y)$ dans $\F_p$. Cela a de bonnes chances de donner un résultat, puisqu'un polynôme à coefficients sur $\F_p$ a de bonnes chances d'admettre une racine.

À quel point cette méthode est-elle plus avantageuse que la recherche de courbes au hasard? Lorsque l'on tire une courbe au hasard, elle a environ une chance sur $N$ d'admettre un point rationnel de $N$-torsion, et vérifier sa présence a un coût essentiellement proportionnel à $N$: en utilisant une équation modulaire, le calcul coûteux est la recherche de racines d'un polynôme de degré $N + 1$. Le coût total est donc essentiellement quadratique. En utilisant une équation de $Y_1(N)$, il faut en premier lieu la stocker, et ensuite chercher les racines d'un polynôme de degré $d$, le degré de l'équation $g$ en la variable $y$. Plus ce degré est faible, plus l'équation est petite et plus cette méthode est intéressante.

On arrive donc à la notion de \emph{gonalité}. On appelle gonalité d'une courbe algébrique $X$ sur un corps algébriquement clos $K$ le plus petit entier $k\geq 1$ tel qu'il existe une application rationnelle de degré $k$ de $X$ vers la droite projective. Autrement dit, c'est le plus petit entier $k$ tel qu'il existe une extension $K(X)/K(f)$ de degré $k$ pour un certain $f\in K(X)$. La gonalité mesure donc un défaut de rationalité, et les courbes de gonalité 1 sont exactement les courbes birationnelles à $\P^1$.

Lorsque $X$ est donné par une équation plane, la gonalité est majorée par le degré de l'équation en chacune de ses variables. Or, les résultats connus sur la gonalité de $X_1(N)$ sont décourageants.

\begin{prop}
La gonalité $k$ de $X_1(N)$ vérifie l'inégalité $\frac{21}{100}(g_1(N)-1)\leq k.$
\end{prop}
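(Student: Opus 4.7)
La preuve repose sur une inégalité spectrale due à Abramovich, qui minore la gonalité d'une courbe modulaire en termes de la première valeur propre non nulle du laplacien hyperbolique sur le modèle analytique. Le plan est d'enchaîner trois ingrédients classiques : (i) l'inégalité d'Abramovich elle-même, (ii) une minoration effective de $\lambda_1$ pour $\Gamma_1(N)$, et (iii) un calcul de volume via Gauss--Bonnet reliant $\mathrm{vol}(\Gamma_1(N)\b\H)$ au genre.

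Première étape : établir l'inégalité d'Abramovich. Si $f\de X(\Gamma) \vers \P^1$ est un morphisme de degré $k$, la fonction $\log|f|$ fournit sur la surface de Riemann $\Gamma\b\H^*$ une fonction dont l'énergie de Dirichlet est contrôlée par $k$ à une constante explicite près (essentiellement en tirant en arrière la forme de Fubini--Study et en utilisant l'additivité de l'énergie sous une projection ramifiée). D'autre part, la théorie spectrale donne une minoration à la Rayleigh : pour toute fonction non constante orthogonale aux constantes, le quotient entre énergie de Dirichlet et norme $L^2$ est au moins $\lambda_1(\Gamma)$. En combinant ces estimations, on obtient l'inégalité
$$k \ \geq \ \frac{\lambda_1(\Gamma)}{8\pi}\, \mathrm{vol}(X(\Gamma)),$$
le volume étant calculé relativement à la métrique hyperbolique standard sur $\H$.

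Deuxième étape : appliquer à $\Gamma = \Gamma_1(N)$ une minoration effective de $\lambda_1$. Le théorème de Selberg assure $\lambda_1 \geq 3/16$ pour tout sous-groupe de congruence, mais la minoration affinée $\lambda_1 \geq 975/4096$ due à Kim et Sarnak est ce qui permet d'atteindre la constante $21/100$ annoncée. Troisième étape : calculer le volume hyperbolique de $\Gamma_1(N)\b\H$, qui se lit directement sur l'indice $[\Gamma(1)\de \Gamma_1(N)]$, puis invoquer Gauss--Bonnet pour identifier ce volume au terme dominant $4\pi (g_1(N) - 1)$, les corrections dues aux pointes et aux points elliptiques étant d'ordre inférieur et de signe tel qu'elles ne dégradent pas la constante. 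Il reste alors à substituer dans l'inégalité d'Abramovich et à vérifier arithmétiquement que $\frac{\lambda_1}{8\pi}\cdot 4\pi \geq \frac{21}{100}$ avec la borne de Kim--Sarnak, ce qui fournit la conclusion.

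Le point délicat, et que l'on serait contraint d'admettre ici en renvoyant à la littérature, est l'inégalité d'Abramovich de la première étape : sa démonstration mobilise la théorie spectrale sur des surfaces hyperboliques non compactes (présence de pointes, spectre continu), ainsi qu'une analyse fine de l'énergie associée à une application de degré fini vers $\P^1$. De même, la minoration $\lambda_1 \geq 975/4096$ est un résultat profond d'analyse automorphe reposant sur les progrès vers la conjecture de Ramanujan--Petersson pour $GL_2$. En revanche, une fois ces deux résultats acquis, la combinaison avec le calcul d'indice et la formule de genre de la proposition précédente est une vérification arithmétique directe.
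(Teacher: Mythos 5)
Le mémoire ne démontre pas cette proposition : elle est simplement citée de \cite{gon}, et votre schéma reconstitue précisément la stratégie de cette référence (inégalité de type Li--Yau/Abramovich reliant gonalité, $\lambda_1$ et volume hyperbolique, minoration spectrale pour les groupes de congruence, Gauss--Bonnet pour passer du volume au genre). Sur le plan de l'architecture, votre découpage est donc le bon, et il est raisonnable d'admettre les deux ingrédients profonds que vous signalez.

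En revanche, votre vérification arithmétique finale ne ferme pas. Avec l'inégalité sous la forme que vous énoncez, $k \geq \frac{\lambda_1}{8\pi}\,\mathrm{vol}(X_1(N))$, et $\mathrm{vol} \geq 4\pi\,(g_1(N)-1)$ (Gauss--Bonnet, les pointes ne faisant qu'augmenter le volume), on obtient $k \geq \frac{\lambda_1}{2}(g_1(N)-1)$ ; atteindre la constante $\frac{21}{100}$ exigerait donc $\lambda_1 \geq \frac{42}{100}$, ce qui est hors de portée : la borne de Kim--Sarnak que vous invoquez ne donne que $\lambda_1 \geq \frac{975}{4096} \approx 0{,}238$, et même la conjecture de Selberg ne fournirait que $\lambda_1 \geq \frac{1}{4}$. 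Votre chaîne d'arguments livre donc au mieux $k \geq 0{,}12\,(g_1(N)-1)$ environ. En réalité, le \og 21\fg\ de l'énoncé provient de la borne de Luo--Rudnick--Sarnak $\lambda_1 \geq \frac{21}{100}$ utilisée par Abramovich, qui conduit à $k \geq \frac{7}{800}\,[\mathrm{PSL}_2(\Z):\Gamma_1(N)]$, soit, via $[\mathrm{PSL}_2(\Z):\Gamma_1(N)] \geq 12(g_1(N)-1)$, une minoration de la forme $\frac{21}{200}(g_1(N)-1)$ et non $\frac{21}{100}(g_1(N)-1)$. Telle quelle, la constante de l'énoncé n'est donc pas atteinte par votre argument ; il faudrait soit lire la constante comme dans la référence (facteur $\frac{1}{2}$ près), soit disposer d'une minoration spectrale ou d'une inégalité géométrique strictement plus forte que celles que vous citez, ce que vous ne fournissez pas.
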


Dans ce résultat, tiré de \cite{gon}, $g_1(N)$ est le genre de $X_1(N)$, qui est quadratique en $N$ vu la proposition précédente. Une équation plane de $X_1(N)$ est donc nécessairement de degré quadratique en chacune de ses variables: le stockage seul de cet équation est trop coûteux pour envisager de l'utiliser pour de grandes valeurs de $N$.

La situation est sensiblement la même pour les courbes modulaires $X_0(N)$. Il s'agit donc de trouver le juste milieu entre les très petites valeurs, pour lesquelles utiliser les courbes modulaires est intéressant, et les valeurs plus élevées où le coût explose. En pratique, on peut utiliser les courbes $X_0(30)$ ou $X_1(17)$, de gonalité respectivement 2 et 4, et dont il faut alors calculer une équation.

\v
On peut apporter une autre amélioration à la stratégie naïve du début de cette section. En effet, au cours de l'algorithme SEA, on peut choisir de s'arrêter lorsque les valeurs de $t \mod \ell$ pour de petites valeurs de $\ell$ ne sont pas satisfaisantes, ou s'il y a trop peu de nombres premiers d'Elkies. On obtient alors l'algorithme de recherche que nous avons utilisé en pratique:
\begin{itemize}
\item[•] Générer des courbes candidates $E$ à l'aide d'une équation modulaire du bon niveau;
\item[•] Regarder si la courbe admet un modèle de Montgomery;
\item[•] Si oui, lancer l'algorithme SEA et s'arrêter s'il n'y a pas de torsion rationnelle pour de petits premiers;
\item[•] Une fois le cardinal calculé, estimer le temps nécessaire à un parcours dans le graphe d'isogénies en fonction des résultats;
\item[•] Conserver les meilleures courbes.
\end{itemize}

Le nombre de conditions à satisfaire dans l'étape 3 est déterminé empiriquement avec l'intuition suivante: l'algorithme sera le plus efficace s'il passe autant de temps à générer des candidats, vérifier les premières conditions et calculer la cardinalité en entier, sachant que moins de courbes sont concernées à mesure que l'on avance dans la liste ci-dessus. Par exemple, si l'on utilise la courbe modulaire $X_1(17)$,
on peut tester la présence de $\ell$-torsion rationnelle pour $\ell\in\{3, 5, 7, 11, 13\}$.

\v

Voyons maintenant comment calculer une équation pour ces courbes modulaires, ainsi que l'expression de la fonction $j$ sur ces courbes.

\subsection{Calcul d'une équation de $X_0(30)$}

L'espace des formes modulaires paraboliques de poids 2 et de niveau $\Gamma_0(30)$ est de dimension 3, et une base de cette espace est donnée par
$$\begin{aligned}
f_1 &= q - q^4 - q^6 - 2q^7 + q^9 + q^{10} - 2q^{11} + 2q^{12} + 2 q^{14} +\ldots\\
f_2 &= q^2 - q^4 - q^6 - q^8 + q^{10} + q^{12} + 3q^{16} + q^{18} - q^{20} - \ldots \\
f_3 &= q^3 + q^4 - q^5 - q^6 - 2q^7 - 2q^8 + q^{10} + 2q^{11} + 2q^{13} + \ldots
\end{aligned}$$
La forme $f_2$ est \emph{vieille} car elle s'écrit $g(2\tau)$ où $g$ est de niveau $\Gamma_0(15)$, ce qui n'est pas le cas de $f_1$ et $f_3$. Ces formes sont de poids 2, donc correspondent à des formes différentielles holomorphes sur la courbe $X_0(30)$. Le terme \emph{parabolique} signifie que ces formes s'annulent sur les pointes de $X_0(30)$, et on peut d'ailleurs lire l'ordre de leur zéro à la pointe $\infty$ sur la première puissance de $q$ apparaissant dans leur développement. On définit alors
$$ x = \frac{f_2}{f_3}, \quad y = \frac{dx/d\tau}{f_3}.$$
Les fonctions modulaires $x$ et $y$ sont de poids zéro, ce sont donc des fonctions méromorphes sur la surface de Riemann $X_0(30)$. On trouve une relation reliant $x$ et $y$ grâce à l'algèbre linéaire:
$$ y^2 =  x^8 + 6x^7 + 9x^6 + 6x^5 - 4x^4 - 6x^3 + 9x^2 - 6x +1. $$
Bien sûr, on ne peut tester cette égalité que sur un nombre fini de coefficients. Pour montrer que cette égalité a bien lieu, on peut utiliser l'argument suivant: la différence
$y^2 - P(x)$
est une fonction méromorphe sur $X_0(30)$ dont les pôles sont concentrés aux pointes, et les ordres de ses pôles sont bornés par une quantité explicite. De plus, le calcul montre que cette fonction admet un zéro à la pointe $\infty$ dont l'ordre est supérieur à, disons, 100. Si cette fonction était non nulle, elle aurait autant de zéros que de pôles, ce qui n'est pas le cas: elle est donc identiquement nulle.

Il reste à voir que $x$ et $y$ engendrent bien le corps des fonctions de $X_0(30)$. La courbe hyperelliptique $\Cl$ donnée par l'équation ci-dessus est de genre 3, et on dispose d'une application rationnelle donnée par les fonctions $x, y$:
$$X_0(30) \vers \Cl.$$
Or, le genre de $X_0(30)$ est également 3 (la dimension de l'espace des formes paraboliques ci-dessus), ce qui force cette application à être de degré 1 par la formule de Hurwitz.

Pourquoi avoir choisi ces fonctions $x$ et $y$ particulières? Pour le comprendre, on peut suivre le chemin inverse. Si l'on souhaite une équation de la forme $y^2 = f(x)$, alors les deux formes différentielles sur cette courbe hyperelliptique
$$\omega_3 =  \frac{dx}{y},\quad \omega_1 = \frac{x\, dx}{y}$$
engendrent l'espace des formes différentielles d'ordre 1, et ont un zéro d'ordre $g$ et 1 respectivement à l'infini, où $g$ désigne le genre de la courbe en question. On a donc choisi $f_3$ et $f_1$ de cette façon. C'est le raisonnement suivi dans la thèse de Galbraith \cite{Galbraith}.

Cette équation de $X_0(30)$ ne nous est utile que si l'on connaît l'expression de la fonction modulaire $j$ en fonction de $x$ et $y$, ce qui permet de retrouver une courbe elliptique à partir d'un point de la courbe modulaire $X_0(30)$. Comme on connaît la $q$-expansion de $j$, on peut faire de l'algèbre linéaire comme précédemment, et on trouve que $j$ peut s'exprimer en fonction de $x$ et $y$ par une fraction rationnelle de degré 36, de numérateur
$$ \begin{aligned}
 &11390626x^{36} + 136688232x^{35} + 615281004x^{34} + 1157841640x^{33} + 11390624x^{32}y + 385027410x^{32} \\
 &+ 102514896x^{31}y + 3899199936x^{31} + 307361808x^{30}y + 68878885856x^{30} + 164026064x^{29}y  \\
 &+ 461330789952x^{29} - 1401072176x^{28}y + 2024096667816x^{28} - 8580123696x^{27}y + 6544388993120x^{27} \\
 &- 54692903792x^{26}y + 16121144156304x^{26} - 290282276144x^{25}y + 30519217044192x^{25} \\
 &- 1110816170032x^{24}y + 44213136417480x^{24} - 3094506362864x^{23}y + 48084954873408x^{23} \\
 &- 6419993916592x^{22}y + 37447747660704x^{22} - 10015247114224x^{21}y + 18293916069056x^{21} \\
 &- 11672985621488x^{20}y + 2315694303708x^{20} - 9885812131312x^{19}y - 4389588785808x^{19} \\
 &- 5658074812208x^{18}y - 4676088269240x^{18} - 1674540316912x^{17}y - 3342328078608x^{17} \\
 &+ 354269688656x^{16}y - 2056348964388x^{16} + 760401538672x^{15}y - 837671190464x^{15} + 580040840752x^{14}y \\
 & + 34891362336x^{14} + 300727188592x^{13}y + 90210852288x^{13} + 129394057072x^{12}y + 159283365960x^{12} \\
  &- 333927056x^{11}y - 3950452128x^{11} - 2917217872x^{10}y + 33962685456x^{10} - 10863538576x^9y \\
  &- 10436233760x^9 + 144372848x^8y + 1058338344x^8 - 342561616x^7y - 907019328x^7 + 79366768x^6y \\
  &+ 209996384x^6 + 4157616x^5y - 19141824x^5 + 4326064x^4y + 4478610x^4 - 286544x^3y - 2160280x^3 \\
  &- 402192x^2y + 883116x^2 + 134064xy - 196248x - 14896y + 16354
\end{aligned}
$$
et de dénominateur
$$ \begin{aligned}
&x^{35} - 13x^{34} + 56x^{33} + 21x^{32} - x^{31}y - 1106x^{31} + 16x^{30}y + 4207x^{30} - 104x^{29}y \\
&- 4268x^{29} + 294x^{28}y - 16637x^{28} + 165x^{27}y + 64174x^{27} - 4184x^{26}y - 62529x^{26} \\
 &+ 14224x^{25}y - 131200x^{25} - 18436x^{24}y + 478757x^{24} - 18529x^{23}y - 645986x^{23} \\
 &+ 102680x^{22}y + 1004763x^{22} - 95688x^{21}y + 5962092x^{21} - 225622x^{20}y - 6407093x^{20} \\
 &+ 572493x^{19}y - 14435032x^{19} + 1166032x^{18}y + 11257145x^{18} - 1185120x^{17}y + 15043608x^{17} \\
 &- 3222648x^{16}y - 10395833x^{16} + 1481173x^{15}y - 6881894x^{15} + 4046208x^{14}y + 5269581x^{14} \\
 &- 1639608x^{13}y + 1177004x^{13} - 2375590x^{12}y - 1236463x^{12} + 1434343x^{11}y - 566142x^{11} \\
 &+ 409992x^{10}y + 341845x^{10} - 686768x^9y + 516016x^9 + 205628x^8y - 349313x^8 + 115085x^7y \\
 &- 61286x^7 - 88744x^6y + 105001x^6 + 9064x^5y - 35308x^5 + 4374x^4y + 6561x^4 - 729x^3y - 729x^3.
\end{aligned}
$$
La situation se complique légèrement, en revanche, si l'on veut connaître une équation de la courbe et les coordonnées du point de 2-torsion garanti par $X_0(30)$, par exemple. Connaître ce point de 2-torsion permet d'éviter une recherche de racine lorsque l'on se demande si la courbe elliptique obtenue admet un modèle de Montgomery. La difficulté est que les coordonnées $a_4$ et $a_6$ de l'équation de Weierstrass \og usuelle\fg, proportionnelles aux séries d'Eisenstein $E_4$ et $E_6$, ne sont pas des fonctions modulaires de poids zéro, mais $\lambda^2 a_4$ et $\lambda^3 a_6$ en sont pour toute fonction modulaire $\lambda$ de poids $-2$. Les coordonnées du point de 2-torsion sont alors un multiple de la série d'Eisenstein $E_2$, précisément $\frac{-\lambda}{6} E_2$. On renvoie à \cite{Elkies} pour les détails.

\subsection{Calcul d'une équation de $X_1(17)$}

Comme dans le cas de $X_0(30)$, on recherche une équation de $X_1(17)$ sous forme de courbe plane valable sur le corps des nombres complexes, et on espère que cette équation est à coefficients entiers et donne bien une équation de $X_1(17)_{\F_p}$ une fois réduite modulo $p$. Fixons $N = 17$; on recherche donc deux formes modulaires $f$, $g$ de niveau $\Gamma_1(N)$, et un polynôme $\Phi_N^{(f, g)}$ à coefficients entiers tel que
$$\Phi_N^{(f, g)}(f, g) = 0.$$

On suit ici l'article \cite{Baaziz}. On note $\wp(z, \Lambda_\tau)$ la fonction $\wp$ de Weierstrass associée au réseau $\Lambda_\tau = \Z\oplus\Z \tau.$

\begin{thm}
On définit deux fonctions méromorphes de la variable complexe :
$$f(\tau) = \frac{(\wp(1/N, \Lambda_\tau) - \wp(1/N, \Lambda_\tau))^3}{\wp'(1/N, \Lambda_\tau)^2}, \quad
g(\tau) = \frac{\wp'(2/N, \Lambda_\tau)}{\wp'(1/N,\Lambda_\tau)}.$$
Alors $f$ et $g$ sont deux fonctions modulaires pour $\Gamma_1(N)$, et engendrent le corps des fonctions de la courbe $X_1(N)$. Elles vérifient l'équation
$$\psi_N(1+g, f, f) = 0$$
où $\psi_N(a_1, a_2, a_3)$ est le $N$-ième polynôme de division de la courbe d'équation
$$y^2 + a_1xy + a_3y = x^3 + a_2x^2$$
évalué au point $(x, y) = (0, 0)$.
\end{thm}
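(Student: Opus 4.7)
Le plan se décompose en quatre temps : (a) montrer que $f$ et $g$ sont bien des fonctions modulaires pour $\Gamma_1(N)$ ; (b) mettre le couple $(\C/\Lambda_\tau, 1/N)$ sous forme normale de Tate et identifier les paramètres obtenus avec $f$ et $g$ ; (c) déduire l'équation $\psi_N(1+g, f, f) = 0$ d'un argument de $N$-torsion ; (d) justifier la génération du corps des fonctions.

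La modularité de $f$ et $g$ résulterait des formules de transformation classiques de $\wp$ et $\wp'$. Pour $\gamma = \begin{pmatrix} a & b \\ c & d \end{pmatrix} \in \mathrm{SL}_2(\Z)$, on a $(c\tau+d)\Lambda_{\gamma\tau} = \Lambda_\tau$, d'où par homogénéité
\[\wp(z, \Lambda_{\gamma\tau}) = (c\tau+d)^2\, \wp\bigl((c\tau+d) z, \Lambda_\tau\bigr), \quad \wp'(z, \Lambda_{\gamma\tau}) = (c\tau+d)^3\, \wp'\bigl((c\tau+d) z, \Lambda_\tau\bigr).\]
Lorsque $\gamma \in \Gamma_1(N)$ et $z = k/N$ pour $k\in\{1,2\}$, les congruences $c \equiv 0$ et $d \equiv 1 \mod N$ donnent $(c\tau+d)k/N \equiv k/N \mod \Lambda_\tau$, et l'on obtient ainsi que $\wp(k/N, \Lambda_\tau)$ et $\wp'(k/N, \Lambda_\tau)$ sont des formes modulaires de poids respectifs $2$ et $3$ pour $\Gamma_1(N)$ (le poids impair étant autorisé puisque $-I \notin \Gamma_1(N)$ dès que $N\geq 3$). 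Le numérateur et le dénominateur de $f$ sont alors tous deux de poids $6$, donc $f$ est de poids $0$ ; de même, $g$ est un quotient de deux expressions de poids $3$, donc également de poids $0$. La méromorphie aux pointes se lit sur la $q$-expansion.

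Pour l'étape (b), je partirais du modèle de Weierstrass $y^2 = 4x^3 - g_2 x - g_3$ muni du point $P_0 = (\wp(1/N), \wp'(1/N)/2)$, et j'effectuerais le passage à la forme normale de Tate en trois temps : une translation amenant $P_0$ à l'origine ; un cisaillement $y \mapsto y + rx$ choisi pour annuler le coefficient $a_4$, c'est à dire rendre horizontale la tangente en $(0,0)$ ; enfin un changement d'échelle $(x,y) \mapsto (u^2 x, u^3 y)$ imposant la normalisation $a_2 = a_3$ caractéristique de la forme de Tate. Un calcul direct, utilisant l'équation différentielle $\wp'^2 = 4\wp^3 - g_2\wp - g_3$, sa dérivée, et la formule d'addition de $\wp$ (qui fait intervenir $\wp(2/N)$ et $\wp'(2/N)$ via la position du point $2P_0$), devrait aboutir exactement aux expressions $a_1 = 1 + g(\tau)$ et $a_2 = a_3 = f(\tau)$. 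L'équation en découle alors directement : par construction, l'image de $P_0$ dans la forme de Tate est $(0,0)$, et $P_0$ est de $N$-torsion exacte sur $\C/\Lambda_\tau$ puisque $N\cdot(1/N)\in\Lambda_\tau$ ; le $N$-ième polynôme de division d'une courbe elliptique s'annulant précisément aux points de $N$-torsion, on obtient l'identité $\psi_N(1+g, f, f) = 0$ comme égalité de fonctions modulaires sur $\H$.

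Pour la génération du corps des fonctions en (d), on remarque que l'application $(f, g)\de Y_1(N)_\C \vers \A^2_\C$ coïncide avec le morphisme qui envoie un couple $(E, P)$ sur les paramètres $(-a_2, 1 - a_1)$ de sa forme normale de Tate ; l'unicité de cette dernière rend ce morphisme injectif sur les points géométriques, donc birationnel sur son image (la courbe plane d'équation $\psi_N(1+g, f, f) = 0$), ce qui force $\C(f, g) = \C(X_1(N))$. L'obstacle principal se situe dans le calcul détaillé de l'étape (b) : suivre les coefficients $a_1, a_2, a_3$ à travers les trois changements de variables successifs et vérifier la simplification en les expressions exactes de $f$ et $g$ du théorème. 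Cette vérification n'est pas conceptuellement difficile, mais demande une manipulation soigneuse des identités satisfaites par $\wp, \wp'$ et $\wp''$, notamment la formule de duplication et l'équation différentielle.
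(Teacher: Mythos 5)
Votre démonstration est correcte et suit essentiellement la même voie que celle du mémoire : modularité de $f$ et $g$, mise sous forme normale de Tate du couple $(\C/\Lambda_\tau, 1/N)$ — c'est exactement le lemme clé du texte, que vous esquissez par le même calcul sur les modèles de Weierstrass sans le mener davantage à terme que le mémoire lui-même —, annulation du polynôme de division par $N$-torsion du point $(0,0)$, puis séparation des points par $(f,g)$ grâce à l'unicité de la forme de Tate pour obtenir la génération du corps des fonctions. La seule variante, mineure, est que vous établissez la modularité de $g$ par la transformation directe de $\wp'$ évaluée en $2/N$ plutôt que via les formules de duplication employées dans le texte.
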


Remarquons que le calcul de ce polynôme de division évalué en $(0, 0)$ est facile, en utilisant des formules récursives comme précédemment. Rappelons que le polynôme de division s'annule exactement sur les points affines de $N$-torsion de la courbe. Le lemme clé est le suivant:

\begin{lem}
Toute classe d'isomorphisme de courbes elliptiques $(E, P)$ où $P$ est un point d'ordre $N$ sur un corps $K$ algébriquement clos contient un représentant de la forme
$$E\de y^2 + ((1+g) x + f)y = x^3 + fx^2,\ P = (0,0).$$
avec $f\in K^\times,\ g\in K$. Ce représentant est unique à unique isomorphisme près.
De plus, si $K=\C$ et $(E, P) = (\C/\Lambda_\tau, 1/N)$, on a
$$f = \frac{(\wp(1/N, \Lambda_\tau) - \wp(1/N, \Lambda_\tau))^3}{\wp'(1/N, \Lambda_\tau)^2}, \quad  g = \frac{\wp'(2/N, \Lambda_\tau)}{\wp'(1/N,\Lambda_\tau)}.$$
\end{lem}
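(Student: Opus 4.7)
Le plan est d'établir d'abord, par trois changements de coordonnées successifs, l'existence d'un représentant de la forme voulue (\emph{forme normale de Tate}), puis l'unicité par un argument sur le groupe des changements de coordonnées de Weierstrass ; le cas complexe sera ensuite traité séparément par un calcul explicite.

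Pour l'existence, je partirais d'une équation de Weierstrass quelconque pour $E$. D'abord, translater les coordonnées pour envoyer $P$ sur $(0,0)$ : cela force $a_6 = 0$. Ensuite, effectuer la substitution $y \mapsto y + \alpha x$ pour annuler $a_4$ : la formule $a_4 \mapsto a_4 + \alpha a_3$ montre que, comme $P$ n'est pas de 2-torsion, on a $a_3 \neq 0$, et l'unique choix $\alpha = -a_4/a_3$ convient (géométriquement, cela rend horizontale la tangente en $P$). L'équation prend alors la forme $y^2 + a_1 xy + a_3 y = x^3 + a_2 x^2$ ; la tangente $y = 0$ recoupe la courbe en $(-a_2,\,\cdot)$, et la condition $a_2 = 0$ équivaudrait à ce que $P$ soit de 3-torsion, ce qui est exclu par $N \geq 4$. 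Enfin, l'homothétie $(x, y) \mapsto (u^2 x, u^3 y)$ transforme $(a_2, a_3)$ en $(a_2/u^2, a_3/u^3)$, et l'unique choix $u = a_3/a_2$ rend $a_2 = a_3$. Poser $f = a_3$ et $g = a_1 - 1$ fournit alors la forme voulue.

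Pour l'unicité, je paramétrerais le groupe des changements de coordonnées préservant la forme de Weierstrass par $(u, r, s, t) \in K^\times \times K^3$, via $(x, y)\mapsto (u^2 x + r,\, u^3 y + s u^2 x + t)$. Préserver le point $(0,0)$ force $r = t = 0$ ; la formule $u^4 a_4' = a_4 - s a_3$ force alors $s = 0$ (car $a_3 = f \neq 0$) ; et $a_2' = a_3'$ avec $a_2 = a_3 \neq 0$ force $u = 1$. L'unique automorphisme préservant la forme normale est donc l'identité, ce qui fournit l'unicité à unique isomorphisme près.

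Dans le cas complexe, je partirais du paramétrage $z \mapsto (\wp(z), \wp'(z)/2)$ associé à l'équation $y^2 = x^3 - (g_2/4) x - g_3/4$, avec $P = (\wp(1/N), \wp'(1/N)/2)$ et $[2]P = (\wp(2/N), \wp'(2/N)/2)$. Suivant les trois étapes : d'abord $a_3 = \wp'(1/N)$ après translation et annulation de $a_4$ ; ensuite $a_2 = \wp(1/N) - \wp(2/N)$, lu comme l'opposé de l'abscisse de $[2]P$ après translation (ce que la formule de duplication $\wp(2z) + 2\wp(z) = \tfrac{1}{4}(\wp''(z)/\wp'(z))^2$ permet aussi de vérifier directement) ; enfin l'homothétie $u = a_3/a_2$ donne
\[
f = \frac{a_2^3}{a_3^2} = \frac{(\wp(1/N) - \wp(2/N))^3}{\wp'(1/N)^2},
\]
expression qui correspond à celle de l'énoncé à une coquille près (l'une des deux occurrences de $\wp(1/N)$ au numérateur doit être $\wp(2/N)$). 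Un calcul analogue donne $a_1 = \wp''(1/N)/\wp'(1/N)$ avant homothétie ; l'identification $g = \wp'(2/N)/\wp'(1/N)$ s'obtient ensuite par manipulation de la formule de duplication différentiée reliant $\wp'(2/N)$ aux valeurs de $\wp, \wp', \wp''$ en $1/N$. Le point le plus délicat sera précisément cette dernière identification ; le reste est purement algébrique et reste valable en caractéristique quelconque pourvu que $N \geq 4$.
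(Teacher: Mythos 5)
Votre preuve est correcte et suit essentiellement la même démarche que le mémoire, qui se borne d'ailleurs à indiquer que le lemme \og s'établit à l'aide de calculs sur les modèles de Weierstrass\fg\ avec pour ingrédient clé le paramétrage $z\mapsto(\wp(z),\wp'(z))$ : votre mise sous forme normale de Tate (translation de $P$ en $(0,0)$, tangente horizontale, homothétie imposant $a_2=a_3$), l'argument d'unicité via les changements de variables $(u,r,s,t)$, et votre correction de la coquille de l'énoncé (le numérateur doit être $(\wp(1/N,\Lambda_\tau)-\wp(2/N,\Lambda_\tau))^3$) en constituent le développement standard et exact. Signalons seulement que la \og dernière identification\fg\ que vous jugez délicate est en fait immédiate : la formule de duplication pour l'ordonnée donne $\wp'(2z)=\frac{\wp''(z)}{\wp'(z)}\bigl(\wp(z)-\wp(2z)\bigr)-\wp'(z)$, d'où, après l'homothétie $u=a_3/a_2$, $g=a_1'-1=\wp'(2/N,\Lambda_\tau)/\wp'(1/N,\Lambda_\tau)$.
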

Ce lemme s'établit à l'aide de calculs sur les modèles de Weierstrass. L'ingrédient clé est que la courbe elliptique $\C/\Lambda_\tau$ est isomorphe à la courbe elliptique $y^2 = x^3 + g_2(\tau) x + g_3(\tau)$, l'isomorphisme étant donné par 
$z\mapsto (\wp(z), \wp'(z))$.

\begin{proof}[Démontration du théorème] On vérifie que les fonctions méromorphes
$$\tau\mapsto \wp(1/N, \Lambda_\tau),\quad \tau\mapsto\wp'(1/N,\Lambda_\tau),\quad \tau\mapsto\wp''(1/N, \Lambda_\tau)$$
sont des formes modulaires pour $\Gamma_1(N)$ de poids respectifs 2, 3 et 4. On montre alors que $f$ et $g$ sont des fonctions modulaires (de poids 0) pour $\Gamma_1(N)$ à l'aide des formules de duplication, donc des fonctions sur $X_1(N)_\C = \overline{\Gamma_1(N)\backslash \H}$. Si $a$ est un point affine de cette courbe modulaire, selon le lemme, $a$ admet un représentant $(E, P)$ qui s'écrit
$$E\de y^2 + ((1 + g(a))x + f(a))y = x^3 + f(a)x^2,\quad P = (0, 0).$$
$P$ étant un point de $N$-torsion, on a bien $\psi_N(1+g(a), f(a), f(a)) = 0$. \'Etant vraie sur un ouvert affine, cette égalité est vraie sur tout $X_1(N)$.

Il reste à montrer que $f$ et $g$ engendrent le corps des fonctions de $X_1(N)_\C$. Pour cela, il suffit de trouver un ouvert non vide $U$ de la surface de Riemann compacte $X_1(N)_\C$ tel que $(f, g)$ sépare les points sur $U$. Or, c'est le cas sur l'ouvert affine $Y_1(N)$, car le lemme donne un représentant d'un tel point en fonction uniquement de $f$ et $g$.
\end{proof}

Le fait que $f$ et $g$ engendrent le corps des fonctions montre que l'application rationnelle
$$X_1(N)_\C \overset{(f,\ g)}{\vers} \left\{(u, v)\in \P^2(\C) \de \psi_N(1+v, u, u) = 0\right\}$$
est un isomorphisme birationnel sur son image : on a donc obtenu un modèle plan de la courbe modulaire $X_1(N)$.

Vu les résultats précédents sur la gonalité de $X_1(N)$, on doit s'attendre à trouver des équations de degré de l'ordre de $N^2$ en chaque variable. En pratique, on peut s'arranger pour réduire ce degré d'un facteur constant. Une première étape est de remarquer que $\psi_N(1+g, f, f)$ possède un facteur $f^{v_N}.$ On peut déterminer cette valuation à partir des formules de récurrence, et on obtient
$$ v_N =
\begin{cases}
3k^2 & \text{si}\ N = 3k, \\
3k^2 + 2k &\text{si}\ N = 3k+1, \\
3k^2 + 4k + 1 &\text{si}\ N = 3k+2.
\end{cases}$$
En pratique, on calcule directement la quantité $f^{-v_N}\psi_N$, pour laquelle on peut donner une formule récursive semblable. De plus, si $M$ divise $N$, alors $\psi_M(1+g, f, f)$ divise $\psi_N(1+g, f, f)$ (si $MP = 0$, on a aussi $NP = 0$). On définit donc des polynômes $\Phi^{(f, g)}$ vérifiant :
$$\forall N\geq 1,\ f^{-v_N} \psi_N(1+g, f, f) = \prod_{M | N} \Phi_M^{(f, g)}.$$

On a alors $\Phi_N^{(f, g)}(f, g) = 0$. Une fois cette équation calculée, on peut tenter un changement de variables pour la simplifier. On pose ainsi successivement :
$$\begin{matrix} f = st(t-1), & g = s(t-1)\\
 s = q\frac{r+1}{r} - 1, & t = q(r+1) + 1.
 \end{matrix}$$
On peut aisément inverser ces changements de variables pour vérifier que $q$ et $r$ engendrent toujours le corps des fonctions de $X_1(N)$. L'équation obtenue reste à coefficients entiers, et est de plus petit degré que $\Phi^{(f, g)}$ après simplification; expliquer cette constatation expérimentale reste une question ouverte. Une fois que l'on connaît une équation de $X_1(N)$, on peut effectuer des changements de variables \og aléatoires \fg\ pour tenter d'en trouver une plus intéressante : ce travail est l'objet \cite{Sutheq}. Bien sûr, tout ceci est une affaire de facteurs constants, et on ne peut pas échapper à la minoration de la gonalité donnée précédemment. L'équation de $X_1(17)$ finalement utilisée s'écrit
$$y^4 + (x^3 + x^2 - x + 2)  y^3 + (x^3 - 3  x + 1)  y^2 - (x^4 + 2  x)  y + x^3 + x^2 = 0$$
et l'on retrouve la courbe elliptique avec les relations
$$s = -1 - \frac{y}{x^2 + x},\quad t = 1 + \frac{xy + y^2}{x^3 + x^2y + x^2}.$$
Ces équations sont certes plus élégantes que celles de $X_0(30)$ plus haut, mais elles sont de degré 4 en $y$ (ou en $x$), donc entraînent un coût en réalité plus élevé lors de leur utilisation. D'un autre côté, il est plus intéressant de connaître un point rationnel d'ordre 17 que de savoir que 2, 3 et 5 sont d'Elkies.

\v

Avec ces équations explicites de $X_0(30)$ et $X_1(17)$ et une base de données de polynômes modulaires, tous les ingrédients sont réunis pour pouvoir implémenter le cryptosystème de Couveignes--Rostovtsev--Stolbunov et la recherche de courbe initiale décrite plus haut.

\newpage

\section{Implémentation et performances}

\subsection{Systèmes utilisés}

Nous avons principalement utilisé deux systèmes de calcul formel lors de la mesure des performances de ce protocole cryptographique: Sage \cite{Sage} et Nemo \cite{Nemo}.

Sage (ou SageMath) est un logiciel de calcul formel libre et open-source sous licence GPL. Codé principalement dans le langage Python, sa particularité est de faire appel à de nombreuses bibliothèques C spécialisées dans divers domaines, comme GMP \cite{GMP} pour les opérations arithmétiques de base, Singular pour les polynômes à plusieurs variables, ou des systèmes plus vastes comme FLINT \cite{FLINT}, PARI \cite{PARI} ou NTL \cite{NTL}. Sage est extrêment vaste et couvre de nombreux domaines des mathématiques pures et appliquées, et se veut une alternative à des systèmes de calcul formel payants comme Maple, Mathematica, Magma ou Matlab. Du fait de sa richesse, Sage n'est malheureusement pas toujours bien contrôlé: son installation est parfois difficile (sous Windows notamment), et il souffre de nombreuses fuites de mémoire. D'autre part, le langage Python, bien que très facile d'accès, ne présente pas des performances optimales: cela pose problème lorsqu'une partie coûteuse du calcul n'est pas liée directement vers une bibliothèque C spécialisée.

Nemo est un système de calcul formel pour le langage de programmation Julia, libre et open-souce sous licence BSD. Julia est un langage de programmation jeune qui, contrairement à Python, dispose d'une compilation JIT (pour \og just in time\fg): cela implique qu'il est très facile d'écrire du code qui s'approche des performances de C. Dans le même temps, Julia est un langage de très haut niveau qui présente une syntaxe très naturelle similaire à celle de Python. Un dernier avantage est la présence d'un typage paramétrique, ce qui permet d'écrire un code très générique en gardant de bonnes performances. Par exemple, si ce code générique est finalement appelé sur des entiers 32 bits, le compilateur sera capable de le reconnaître et d'optimiser la gestion de la mémoire pour ce type de données précis.

Comme Sage, Nemo fait appel à des bibliothèques C spécialisées, principalement FLINT. Le système Nemo est très jeune, et est beaucoup plus restreint que Sage; certaines opérations \og basiques\fg\ ne sont pas toujours disponibles. C'est parfois un désavantage, mais cette simplicité permet de mieux contrôler le code que l'on écrit et de comprendre facilement ses performances.

\v
Sage contenait déjà des outils pour manipuler les courbes elliptiques, ce qui en faisait le système le plus accessible pour écrire les premiers tests. Il dispose également d'une base de données de polynômes modulaires facile d'accès, et d'une interface avec le logiciel de calcul formel PARI pour les calculs coûteux comme la recherche de racines d'un polynôme à coefficients dans $\F_p$. En revanche, il représente les courbes elliptiques sous forme de Weierstrass uniquement, alors qu'utiliser le modèle de Montgomery améliore significativement les performances de la multiplication scalaire. De plus, les opérations sur une courbe elliptique ne font pas appel à une bibliothèque C spécialisée, ce qui impacte les performances. 

Nous avons donc fait le choix de développer un module au-dessus de Nemo, afin de disposer de ces nouvelles fonctionnalités dans un langage, Julia, qui fournit des performances comparables à un code C natif tout en restant un langage de très haut niveau. Ce module a été utilisé pour la mesure de performances; à terme, il devrait devenir un module open-source contenant les grandes primitives nécessaires à la cryptographie sur courbes elliptiques, et pourra être utilisé dans un but pédagogique.

\subsection{Contenu}

Afin de pouvoir mesurer les performances du cryptosystème de Rostovtsev et Stolbunov, le module de courbes elliptiques pour Nemo dipose des fonctionnalités suivantes:
\begin{itemize}
\item[•] Définir des courbes elliptiques dans divers modèles, sur des corps ou des anneaux
\item[•] Définir des points sur ces courbes, les ajouter, les multiplier par un entier
\item[•] Calculer le $j$-invariant, et des isomorphismes entre différents modèles
\item[•] Définir des isogénies entre courbes et appliquer les formule de Vélu
\item[•] Avoir accès à une base de données de polynômes modulaires (cela fait appel à un autre module)
\item[•] Calculer le noyau d'une isogénie par l'algorithme de Bostan--Morain--Salvy--Schost
\item[•] Calculer la valeur propre du Frobenius sur un sous-groupe
\item[•] Étendre le corps de base d'une courbe elliptique
\item[•] Tirer aléatoirement un point sur une courbe elliptique sur un corps fini.
\end{itemize}

\v
À ce point du développement, on dispose des modèles de Weierstrass et de Montgomery, et le calcul d'isogénies fonctionne lorsque le degré de l'isogénie est impair. Afin que ce module puisse être utilisé par ailleurs, nous avons le projet d'y ajouter les choses suivantes:

\begin{itemize}
\item[•] D'autres modèles de courbes elliptiques, par exemple le modèle d'Edwards
\item[•] Le calcul de couplages entre deux points
\item[•] Le calcul de la cardinalité d'une courbe elliptique, avec un appel au logiciel PARI
\item[•] Un algorithme de calcul d'isogénie en petite caractéristique
\item[•] Et éventuellement d'autres choses en lien avec la cryptographie sur courbes elliptiques.
\end{itemize}

\v
Le code est en libre accès à l'adresse \url{github.com/JKieffer95/EllipticCurves.jl}. Pour l'utiliser, il suffit d'installer Julia 0.6 et Nemo, d'ajouter le dossier EllipticCurves dans le répertoire .../.julia/v0.6 et de demander: 
\begin{verbatim}
julia> Pkg.add("EllipticCurves")

julia> using EllipticCurves
\end{verbatim}
Le module est alors directement accessible.

\subsection{Mesure de performances}

On a vu qu'il est important de mesurer précisément les performances du calcul d'isogénies. Cela permet en effet de déterminer les performances globales du cryptosystème de Couveignes--Rostovtsev--Stolbunov pour une courbe initiale donnée. On peut ainsi d'une part choisir la meilleure courbe initiale lors de la recherche des paramètres, et d'autre part comparer les performances de ce protocole à d'autres échanges de clés existants. Toutes les mesures sont effectuées sur $\F_p$ où $p$ est le nombre premier déterminé précédemment, d'une taille de 512 bits; la machine utilisée dispose d'un processeur de 3.20GHz Intel Xeon.

Rappelons les deux méthodes de calcul à notre disposition pour calculer une isogénie de degré $\ell$ au départ d'une courbe $E$:
\begin{itemize}
\item[•] Lorsque la courbe elliptique admet un unique sous-groupe d'ordre $\ell$ dont les points sont définis sur une extension de degré $d$, on peut utiliser une multiplication scalaire sur $E$ pour trouver ce sous-groupe, puis utiliser les formules de Vélu;
\item[•] Dans le cas général, on utilise une équation modulaire de degré $\ell + 1$ dont on cherche les racines, on calcule le noyau de l'isogénie correspondante puis on regarde la valeur propre du Frobenius sur ce noyau.
\end{itemize}
Les étapes coûteuses sont alors respectivement
\begin{itemize}
\item[•] Une multiplication par $p^d$ (environ) sur $E(\F_{p^d})$,
\item[•] Une mise à la puissance $p$ modulo un polynôme de degré $\ell + 1$, puis une mise à la puissance $p$ modulo un polynôme de degré $\frac{\ell - 1}{2}$.
\end{itemize}
Ce sont ces deux opérations que l'on choisit de mesurer. Tout d'abord, on présente le coût de la multiplication scalaire dans trois cas: les opérations fournies par Sage (sur les modèles de Weierstrass), ainsi que notre implémentation en Nemo pour les modèles de Weierstrass et de Montgomery. Rappelons que l'on s'attend à trouver un coût quadratique en $d$.

\begin{center}
\begin{tikzpicture}[x = 1.4cm, y=4cm]

\draw[blue] plot file {benchmark_sage_scalarmult.txt};
\draw[red] plot file {benchmark_weierstrass_scalarmult.txt};
\draw[darkgreen] plot file {benchmark_montgomery_scalarmult.txt};
\draw[color=gray!50, very thin] (1,0) grid[xstep = 1, ystep = 0.25] (10, 2);
\foreach \k in {1,2,...,10}
 \draw  (\k, -0.1) node  {\small\k};
 
\foreach \t in {0,0.25,...,2}
 \draw (0.6, \t) node {\small \t};
 
\draw[->] (1,0) -- (1, 2.2);
\draw[->] (1,0) -- (10.5, 0);
\draw (11, 0) node {$d$};
\draw (1, 2.35) node {$t\ (s)$};
\end{tikzpicture}

\small Performance pour le calcul de $[p^d] P$, avec $P \in E(\F_{p^d})$ en utilisant  \textcolor{blue}{Sage},
\textcolor{red}{le modèle de Weierstrass} et
\textcolor{darkgreen}{le modèle de Montgomery}.

\end{center}

Il en ressort qu'utiliser le modèle de Montgomery apporte un gain non négligeable par rapport au modèle classique de Weierstrass. Pour ce dernier, l'écart peut s'expliquer par le fait que le modèle de Weierstrass en Nemo utilise des opérations moins optimisées que celles de Sage. On voit également qu'il est beaucoup plus intéressant de disposer de points de $\ell$-torsion rationnels que sur une extension non triviale, même de degré 3 par exemple.

\v
D'autre part, on présente les performances du calcul de $X^p$ modulo $Q$ lorsque le degré de $Q$ varie, en utilisant la méthode évidente en Sage et en Nemo; pour comparaison, on montre également les performances de PARI pour calculer les racines d'un polynôme du même degré dans $\F_p$, qui réalise essentiellement le même calcul.

\begin{center}
\begin{tikzpicture}[x = 0.025cm, y=1.8cm]
\draw[blue] plot file {benchmark_sage_frob.txt};
\draw[darkgreen] plot file {benchmark_pari_frob.txt};
\draw[red] plot file {benchmark_nemo_frob.txt};
\draw[color=gray!50, very thin] (0,0) grid[xstep = 100, ystep = 0.5] (500, 5);
\foreach \k in {0,100,...,500}
 \draw  (\k, -0.2) node  {\small\k};
 
\foreach \t in {0,0.5,...,5}
 \draw (-20, \t) node {\small \t};
 
\draw[->] (0,0) -- (0, 5.3);
\draw[->] (0,0) -- (520, 0);
\draw (550, 0) node {$\ell$};
\draw (0, 5.5) node {$t\ (s)$};
\end{tikzpicture}

\small{Performance pour le calcul de $X^p \mod{Q(X)}$, où $\deg(Q) = \ell$ dans $\F_p$, avec 
\textcolor{blue}{Sage},
\textcolor{red}{Nemo} et 
\textcolor{darkgreen}{PARI}.}

\end{center}

On voit sur ce graphique qu'il est difficile d'atteindre les performances d'une bibliothèque C spécialisée.

\v
Afin de quantifier les performances du cryptosystème, on choisit la meilleure performance pour chacune de ces deux opérations, c'est à dire le modèle de Montgomery pour la multiplication scalaire et le logiciel PARI pour la recherche de racines. Nous pouvons maintenant donner une mesure de temps précise pour chacune des courbes initiales; c'est cette mesure que l'on utilise pour décider de la meilleure courbe initiale.

\subsection{Catalogue de courbes}

On peut maintenant donner les meilleures courbes obtenues par la méthode de recherche décrite précédemment. Après un premier tri, les meilleures candidates sont comparées grâce aux temps donnés ci-dessus.

Le calcul a été effectué par Sage sur les machines de l'équipe Grace d'Inria, à Saclay. Le temps de calcul total a été d'environ 17000 heures CPU (environ 2 ans, répartis sur différents c\oe urs de différents machines de l'équipe). Rappelons que
$$ \begin{aligned}
p = \ &120373407382088450343833839782228011370920294512701979230713977354082515866699 \\
&38291587857560356890516069961904754171956588530344066457839297755929645858769.
\end{aligned}$$

Voici donc les cinq meilleurs résultats, où l'on donne les paramètres $a, b$ de $\F_p$ définissant la courbe initiale 
$E\de y^2 = x^3 + ax + b$.
La meilleure courbe a été obtenue à partir d'un point de $X_1(17)$, les autres à partir d'un point de $X_0(30)$. 

\begin{enumerate}

\item[•]La courbe lauréate, donnée par les paramètres
$$\begin{aligned}
a =\ & 111164648488639530154045798728826155824357214700727018080372266722366593581458 \\
&61235530764279830938006566253824939581765595814774014481950202045501019370057,\\
b =\ & 79299948324245435156826219405909019495910509940412398234989651750360146055419 \\
&69152503725727477980546149790133670177545414851226849642510122057033563808087
\end{aligned}$$
donne un temps de parcours total de 241 secondes. Elle possède un point de $\ell$-torsion rationnel pour tout $\ell\in\{3, 5, 7, 11, 13, 17, 103, 523, 821, 947, 1723\}$. Son nombre de points sur $\F_p$ est
$$\begin{aligned}
&120373407382088450343833839782228011370920294512701979230713977354082515866700 \\
&85481138030088461790938201874171652771344144043268298219947026188471598838060.
\end{aligned}$$

\item[•]La courbe donnée par les paramètres
$$\begin{aligned}
a =\ & 43840601250479313001373029130703162545657631628386541829543985783667722196819 \\
& 83224703081195403305048751140227654592240178420097979549618534551394652828125,\\
b =\ & 15496029659544806200673781148304612576874915050506487020954702483761967446835\\
& 37981569065162806573553904845624037365860256326225753166777384039680905324745
\end{aligned}$$
donne un temps de parcours total de 255 secondes. Elle possède un point de $\ell$-torsion rationnel pour tout $\ell\in\{3, 5, 7, 11, 13, 617, 823, 911, 2741\}$. Son nombre de points sur $\F_p$ est
$$\begin{aligned}
&120373407382088450343833839782228011370920294512701979230713977354082515866701 \\
&38195889195780276859464571113614769417611266612000180056269739527141220119860.
\end{aligned}$$

\item[•]La courbe donnée par les paramètres
$$\begin{aligned}
a =\ & 29517794513388167079924232272984809284734463332655422100821259368004502646122 \\
&952457634401960 11377947505687906946702377331005567046387329096732076013487786,\\
b =\ & 20053142766210512206895831956067687278077475355845207144609905468409809112720 \\ &96530930100587341304378274081554750091335545148023305498915445491413837103327
\end{aligned}$$
donne un temps de parcours total de 268 secondes. Elle possède un point de $\ell$-torsion rationnel pour tout $\ell\in\{3, 5, 7, 11, 13, 19, 41, 53, 137, 463, 479, 1487\}$. Son nombre de points sur $\F_p$ est
$$\begin{aligned}
&120373407382088450343833839782228011370920294512701979230713977354082515866700 \\ 
&73203413637440471898330295081671080323874929304183417201801524565834682159200.
\end{aligned}$$

\item[•]La courbe donnée par les paramètres
$$\begin{aligned}
a =\ & 90951896883679247161305392056057501142984677621212827642316340355546948094470 \\
&24864291306024183700000129771938088507462877895751345076127980672694337232911,\\
b =\ & 53702533899051600924758507744833413409468531550522028426034775831774723715610 \\
&33304215623428708757733212555261802174294804948862439221808386692046723052122
\end{aligned}$$
donne un temps de parcours total de 273 secondes. Elle possède un point de $\ell$-torsion rationnel pour tout $\ell\in\{3, 5, 7, 11, 13, 37, 43, 127, 347, 457, 1289, 1621\}$. Son nombre de points sur $\F_p$ est
$$\begin{aligned} 
&120373407382088450343833839782228011370920294512701979230713977354082515866699 \\
&55535934625878401362947903345661723969818065903127482030480913154573514334480.
\end{aligned}$$

\item[•]Enfin, la courbe donnée par les paramètres
$$\begin{aligned}
a =\ & 87315792648930225008943587622481440451025081358306503063587707716608806180531 \\
& 79022484817139989036152987296557434567077843065574780789697405848884702097913,\\
b =\ & 63166216325315547106996156590059751380252981936070172805289816785905599938288 \\
& 29632834951338126458626870397768930797859281240868070581323508301210433040195\\
\end{aligned}$$
donne un temps de parcours total de 284 secondes. Elle possède un point de $\ell$-torsion rationnel pour tout $\ell\in\{3, 5, 7, 11, 13, 19, 23, 233, 359, 491, 631, 1481, 2579\}$. Son nombre de points sur $\F_p$ est
$$\begin{aligned}
&120373407382088450343833839782228011370920294512701979230713977354082515866700 \\
&82724204500287887625876543581163624210402478479308430356550349544933039974580.
\end{aligned}$$

\end{enumerate}

\v
Les temps annoncés sont des projections établies à partir des valeurs représentées sur les graphiques précédents; le cryptosystème \og entier\fg\ n'a pas été testé pour ces courbes initiales. Ce temps est celui d'un parcours dans le graphe d'isogénies, c'est à dire un calcul d'action pour un unique élément du groupe de classes sur une unique courbe. Ce calcul doit donc être réalisé deux fois si l'on souhaite effectuer un échange de clé. 

Avec ces courbes, effectuer un échange de clé en utilisant ce protocole nécessite donc une dizaine de minutes environ, ce qui n'est absolument pas compétitif comparé à d'autres protocoles cryptographiques. Environ 880 secondes sont nécessaires pour le calcul d'une action sans utiliser de \og bonne\fg courbe avec la même implémentation: on obtient donc tout de même un gain d'un facteur 4 environ.

\v

Pour clore ce document, on peut soulever plusieurs questions relatives à la sécurité ou aux performances de ce cryptosystème qui ne sont pas traitées ici:
\begin{itemize}
\item[•]Quelles seraient les performances du cryptosystème de Couveignes--Rostovtsev--Stolbunov si l'on parvenait à trouver une courbe parfaite? Serait-il intéressant de choisir un corps de base plus grand que nécessaire pour disposer de plus de marge pour trouver de bonnes courbes?
\item[•]Quel est le discriminant de la courbe finalement choisie? Peut-on vérifier au moins partiellement les propriétés de bonne répartition évoquées plus haut dans son anneau d'endomorphismes?
\item[•]Le choix d'une courbe ayant un grand nombre de points de torsion rationnels a une incidence sur la difficulté du problème du logarithme discret sur cette courbe elliptique: la courbe $E_0$ est de ce point de vue plus faible que la normale. Cela a-t-il une incidence sur la sécurité du protocole?
\item[•]À notre connaissance, la structure du groupe qui agit n'a pas d'incidence sur la sécurité du protocole, contrairement au protocole original de Diffie et Hellman. Peut-on construire une attaque basée sur la présence de facteurs cycliques dans le groupe de classes?
\item[•]Plus généralement, les propriétés spéciales des courbes choisies ont-elles une incidence sur la structure du groupe de classes?

\end{itemize}

\newpage

\bibliographystyle{plain}

\bibliography{Memoire}

\end{document}